\theoremstyle{plain}
\newtheorem{theorem}{Theorem}[section]
\newtheorem{proposition}[theorem]{Proposition}
\newtheorem{lemma}[theorem]{Lemma}
\newtheorem{corollary}[theorem]{Corollary}
\theoremstyle{definition}
\newtheorem{definition}[theorem]{Definition}
\newtheorem{remark}[theorem]{Remark}
\newtheorem*{notation}{Notation}
\newcommand{\annotation}[1]{\raise.8ex\hbox to0pt{\hss\ensuremath{\curlyvee}\hss}\marginpar{\tiny\baselineskip6pt #1}}
\newcommand{\occult}[1]{}
\newcommand{\ST}{\mathbin{\left\bracevert\text{$\phantom{|}$}\right.}}
\newcommand{\NW}{\textsl{NW}}
\newcommand{\lto}[1]{\xrightarrow[\;#1\;]{}}
\newcommand{\dfn}{\mathbin{{\vcentcolon}\hskip-1pt{=}}}
\let\emptyset\varnothing
\newcommand\R{\ensuremath{\mathbb R}}
\newcommand\Z{\ensuremath{\mathbb Z}}
\newcommand\N{\ensuremath{\mathbb N}}
\newcommand\Oc{\ensuremath{\mathcal O}}
\newcommand{\rest}[1]{{}_{\displaystyle\restriction_{\scriptstyle{#1}}}}
\begin{document}

\title[Centralizers for hyperbolic and expansive flows]{Centralizers of hyperbolic and kinematic-expansive flows}

\begin{abstract}
We show generic \(C^\infty\) hyperbolic flows (Axiom A and no cycles, but not transitive Anosov) commute with no \(C^\infty\)-diffeomorphism other than a time-$t$ map of the flow itself. Kinematic expansivity, a substantial weakening of expansivity, implies that \(C^0\) flows have quasi-discrete \(C^0\)-centralizer, and additional conditions broader than transitivity then give discrete \(C^0\)-centralizer. We also prove centralizer-rigidity: a diffeomorphism commuting with a generic hyperbolic flow is determined by its values on any open set.
\end{abstract}
\author{Lennard Bakker}
\address{L.~Bakker, Department of Mathematics, Brigham Young University, Provo, UT 84602, USA}\email{bakker@math.byu.edu}
\author{Todd Fisher}
\address{T.~Fisher, Department of Mathematics, Brigham Young University, Provo, UT 84602, USA}\email{tfisher@math.byu.edu}
\author{Boris Hasselblatt}
\address{B.~Hasselblatt, Department of Mathematics, Tufts University, Medford, MA 02144, USA}\email{boris.hasselblatt@tufts.edu}

\thanks{T.F.\ is supported by Simons Foundation grant \# 580527.}

\subjclass[2010]{37D20, 37C10, 37C20}

\maketitle

\tableofcontents

\section{Introduction}
It is natural to expect a dynamical system to have no symmetries unless it is quite special. Symmetries correspond to the existence of a diffeomorphism or flow that commutes with the given dynamics, so one expects flows to ``typically'' have small centralizers, i.e., to commute with few flows or diffeomorphisms. Our topological results imply that expansive continuous flows have essentially discrete centralizer; the natural condition for this is a weakening of expansivity that does not allow reparameterizations and hence requires only a ``kinematic'' or dynamical separation of orbits rather than a ``geometric'' one.

Smale's list of problems for the next century \cite{Smale98} includes questions regarding how typically centralizers are trivial. Given his interest in classifying dynamical systems up to conjugacy, the centralizer also describes the exact extent of non\-uniqueness of a conjugacy between dynamical systems. Our main result is that hyperbolic flows generically commute with no diffeomorphism.
\begin{definition}
Two diffeomorphisms \(f,g\) are said to \emph{commute} if \(f\circ g=g\circ f\).
A \(C^r\) flow \(\Phi\) is a 1-parameter group of \(C^r\) diffeomorphisms \(t\mapsto\varphi^t\) for \(t\in\R\)
on a closed \(C^r\)-manifold \(M\) with \(0< r\le\infty\). A \(C^0\) flow is an \(\R\)-action by homeomorphisms of a compact topological space \(X\) (and not assumed to be generated by a vector field). For \(r\ge0\) a \(C^r\) diffeomorphism $f\colon M\to M$ \emph{commutes with \(\Phi\)} if\/ $f\circ\varphi^t=\varphi^t\circ f$ for all\/ $t\in\R$.  We denote the set of such diffeomorphisms by $\mathcal{C}^r(\Phi)$ and say that \(\Phi\) has trivial \(C^r\)-centralizer if \(\mathcal{C}^r(\Phi)=\{\varphi^t\mid t\in\R\}\)\rlap.\footnote{Triviality or discreteness of a centralizer of a flow \(\Phi\) means triviality of the closed centralizer subgroup modulo its closed normal subgroup \(\Phi\).}

\(C^r\) flows \(\Phi,\Psi\) commute if all \(\phi^t,\psi^s\) do, so  $\psi^s\circ\varphi^t=\varphi^t\circ\psi^s$ for all\/ $s,t\in\R$. The set of flows \(\Psi\) that commute with \(\Phi\) is called the \emph{\(C^r\)-centralizer} of \(\Phi\), denoted $Z^r(\Phi)$. We say that $Z^r(\Phi)$ is trivial (or that \(\Phi\) has trivial \R-centralizer) if\/ $Z^r(\Phi)$ consists of all constant-time reparameterizations of \(\Phi\).  In other words,  $\Psi\in Z^r(\Phi)\Rightarrow\psi^t=\varphi^{ct}$ for some $c\in\R$ and all\/ $t\in\R$.
\end{definition}
If a flow has nontrivial centralizer, then it is part of an \(\R^k\)-action, and these exhibit some rigidity phenomena \cite[Corollary 5]{KatokSpatzier}, \cite{KS94, KN2011, RHZ14}. Hyperbolicity entails a complicated and tightly interwoven structure on the phase space that is both topologically rigid and smoothly unclassifiable. Therefore, this is a natural context in which to expect centralizers to be generically trivial, and there are a number of prior results to that effect.

\subsection{Discrete-time centralizers: commuting diffeomorphisms}
Extending results of Anderson \cite{Anderson}, Palis \cite{Palis78} proved that among $C^\infty$ Axiom A diffeomorphisms with strong transversality there is an open and dense set with discrete centralizer. Palis and Yoccoz \cite{PY89} extended this: a large class of Axiom A diffeomorphisms with strong transversality has trivial centralizer.  In \cite{Fis1} this was shown to hold for generic (non-Anosov) Axiom A diffeomorphisms with the no-cycles condition (which is weaker than strong transversality).
Rocha and Varandas \cite{RochaVarandas} have shown that the centralizer of \(C^r\)-generic diffeomorphisms at hyperbolic basic sets is trivial.
In the \(C^1\)-topology, Bonatti, Crovisier, and Wilkinson proved that diffeomorphisms generically have trivial centralizer \cite{BCWGenTriv,BCWGenTrivAnn,BCWGenTrivCons}
but jointly with Vago \cite{BCVW} they found that on any compact manifold there is a nonempty open set of \(C^1\)-diffeomorphisms with a \(C^1\)-dense subset of \(C^\infty\)-diffeomorphisms whose \(C^\infty\)-centralizer is uncountable, hence nontrivial.
Our results instead concern flows.
\subsection{Continuous-time centralizers: commuting flows}
For commuting flows there have been fewer results.  Sad \cite{Sad} proved that there is an open and dense set of\/ $C^\infty$ Axiom-A flows with strong transversality that have trivial \R-centralizer. Recently, Bonomo, Rocha, and Varandas \cite{BRV} proved that the \R-centralizer of any Komuro-expansive\footnote{Komuro-expansivity is stronger than the usual notion of expansivity and equivalent when there are no fixed points; sometimes it is called expansivity as well \cite[Section 2.1.2]{BRV}, \cite{Artigue}.} flow with non\-resonant singularities is trivial. Important classes of geometric Lorenz attractors are Komuro-expansive as this form of expansivity is more compatible with the coexistence of regular and singular orbits for the flow. These results were extended by Bonomo and Varandas \cite{BonomoVarandas} to show triviality of the centralizer at homoclinic class. Recently, Leguil, Obata, and Santiago \cite{LOS} investigated when an \R-centralizer for a flow on a manifold is not necessarily trivial, but is ``small'' in a certain sense and proved two criteria that establish this, and  Obata \cite{Obata} recently expanded these results to centralizers for generic vector fields. Our results in the realm of topological dynamics implement his suggestion that a much weaker notion than expansivity (being \emph{kinematic-expansive}) implies quasidiscrete centralizer.

We first determine $Z^r(\Phi)$ for a transitive kinematic-expansive flow $\Phi$.  Although there exist open sets of Anosov diffeomorphisms with trivial centralizer \cite{BF}, there are many examples of Anosov diffeomorphisms with nontrivial centralizer.  This is not the case for Anosov flows.  It has been said to be ``well-known and elementary'' that Anosov flows have trivial \R-centralizer\footnote{``il est bien connu (et \'el\'ementaire) qu'un champ de vecteurs qui commute avec un champ d'Anosov est n\'ecessairement un multiple constant de ce champ''  \cite[p.\ 262]{GhysFoliations}} \cite[Corollary 10.1.4]{FH}.  We extend this to kinematic-expansive flows \emph{that have no differentiability}---noting that results about \(C^0\)-centralizers seem to be exceedingly rare:\footnote{The results in \cite{Oka} are similar to our results on centralizers of kinematic-expansive flows, but not the same in that Oka defines a so-called unstable centralizer and studies it for Bowen--Walters expansive flows.}
\begin{theorem}\label{t.expansive}\label{THMExpansiveChainComp}
Transitive kinematic-expansive flows (Definition \ref{DEFtransitive}) have trivial \(C^0\) \R-centralizer.

Indeed, a kinematic-expansive flow \(\Phi\) has \emph{collinear} and quasitrivial \(C^0\) \(\R\)-centralizer \cite[Definitions 2.1, 2.6]{LOS}, i.e., a commuting flow \(\Psi\) is of the form \(\psi^t(\cdot)=\varphi^{t\cdot\tau(\cdot)}(\cdot)\) for a continuous \(\Phi\)-invariant \(\tau\). If the space is connected and \(\Phi\) has at most countably many chain-components, all of which are topologically transitive, then \(\Phi\) has trivial \(C^0\) \R-centralizer.
\end{theorem}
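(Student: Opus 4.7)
The plan is to prove the structure statement first (kinematic expansivity implies collinear quasitriviality), then derive the trivial-centralizer cases as corollaries by showing the scalar function $\tau$ must be constant.

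\textbf{Stage 1: locating $\psi^s(x)$ on the $\Phi$-orbit of $x$.} Let $\Psi\in Z^0(\Phi)$. Fix a small $\epsilon>0$ and let $\delta>0$ be the associated kinematic-expansivity constant. Compactness and continuity of $\Psi$ give $s_0>0$ with $d(\psi^s(x),x)<\delta$ for all $|s|\le s_0$ and all $x$. For such $s$, using commutation,
\[
d\bigl(\varphi^t(x),\varphi^t(\psi^s(x))\bigr)=d\bigl(\varphi^t(x),\psi^s(\varphi^t(x))\bigr)<\delta\qquad\text{for every }t\in\R,
\]
so kinematic expansivity forces $\psi^s(x)=\varphi^{\tau(x,s)}(x)$ for a uniquely determined real $\tau(x,s)$ with $|\tau(x,s)|<\epsilon$ (choose the unique small representative on periodic orbits; on $\Phi$-fixed points set $\tau=0$, noting that $\psi^s$ must permute $\mathrm{Fix}(\Phi)$ and, for $s$ small, fixes each point of it by continuity). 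The cocycle relation $\tau(x,s+s')=\tau(x,s)+\tau(\psi^s(x),s')$ derived from $\psi^{s+s'}=\psi^s\circ\psi^{s'}$ then extends $\tau(x,\cdot)$ from $[-s_0,s_0]$ to all of $\R$.

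\textbf{Stage 2: $\Phi$-invariance, continuity, and linearity.} Applying $\psi^s$ to a point $\varphi^u(x)$ and using commutation gives $\psi^s(\varphi^u(x))=\varphi^{\tau(x,s)}(\varphi^u(x))$, so $\tau(\cdot,s)$ is constant on $\Phi$-orbits. Because $\psi^s(x)$ itself lies on the $\Phi$-orbit of $x$, the cocycle identity simplifies to additivity $\tau(x,s+s')=\tau(x,s)+\tau(x,s')$, and a continuous additive function of $s$ is linear, so $\tau(x,s)=s\tau(x)$ for $\tau(x)\dfn\tau(x,1)$. Continuity of $x\mapsto\tau(x)$ at non-fixed points follows from continuity of $\psi^s$ together with local injectivity of the map $u\mapsto\varphi^u(y)$ for $y$ near a non-fixed point; at a $\Phi$-fixed point, $\tau$ is extended continuously (any choice is $\Phi$-invariant). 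This yields the announced representation $\psi^t(x)=\varphi^{t\tau(x)}(x)$ with continuous $\Phi$-invariant $\tau$.

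\textbf{Stage 3: upgrading to triviality.} If $\Phi$ is transitive, pick a point with dense $\Phi$-orbit; $\tau$ is constant on that orbit and hence, by continuity, constant on $X$, giving $\psi^s=\varphi^{cs}$. More generally, if $X$ is connected and has at most countably many chain-components $C_1,C_2,\ldots$, each transitive, applying the transitive case to each $C_i$ shows $\tau\equiv c_i$ on $C_i$. Since every point of $X$ lies in some chain-component, $\tau(X)\subseteq\{c_i\}_{i\in\N}$ is countable; being the continuous image of the connected space $X$ it is connected, hence a singleton, so $\tau$ is globally constant and $\Psi$ is a constant-time reparametrization of $\Phi$.

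\textbf{Main obstacle.} The substantive step is Stage 1: establishing the \emph{single-point} tracking $\psi^s(x)=\varphi^{\tau(x,s)}(x)$ with a \emph{continuously chosen} small $\tau(x,s)$, rather than merely the orbit-level statement that $\psi^s(x)$ lies on the $\Phi$-orbit of $x$. Handling periodic orbits (where $\tau$ is only defined modulo the period) and $\Phi$-fixed points uniformly in $x$ is where the kinematic, rather than reparametrized, version of expansivity is essential---reparametrized expansivity would yield only that the orbits coincide as sets, which is insufficient to produce a well-defined continuous cocycle $\tau(x,s)$, and the rest of the argument would collapse.
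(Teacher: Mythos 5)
Your overall strategy matches the paper's: the paper proves the orbit-preservation, constant-shift, and continuous-shift steps as parts (1)--(3) of Proposition \ref{PROPCentralizerCloseToIdentity} (applied to the time-$s$ maps $\psi^s$, which part (4) shows are $C^0$-close to the identity for small $s$), and derives constancy of $\tau$ from Proposition \ref{PRPTrChainCompNoConstOfMotion}. Your Stages 1--3 repackage this with the cocycle $\tau(x,s)$ rather than treating each $\psi^s$ as a separate commuting map, but the substance is the same.

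There is, however, a genuine error in Stage 3. You assert that ``every point of $X$ lies in some chain-component,'' but chain-components partition only the chain-recurrent set $\mathcal{R}(\Phi)$, which is a proper subset of $X$ in general (e.g.\ for gradient-like flows the wandering points belong to no chain-component). As written your proof only shows $\tau$ is countably-valued on $\mathcal{R}(\Phi)$, not on all of $X$. The paper's Proposition \ref{PRPTrChainCompNoConstOfMotion} closes this gap: since $\tau$ is continuous and constant on $\Phi$-orbits, it is constant on orbit closures, so $\tau(x)=\tau(\omega(x))$; and $\omega(x)$ is contained in a single chain-component. This is what forces $\tau(X)\subseteq\{c_i\}$ and hence, by connectedness, constancy.

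A second, smaller issue: in Stage 2 you dismiss continuity of $\tau$ at $\Phi$-fixed points with ``$\tau$ is extended continuously (any choice is $\Phi$-invariant),'' but setting $\tau(p)=0$ at a fixed point $p$ and then claiming $\tau(x_n)\to 0$ when $x_n\to p$ is not automatic -- near a fixed point the flow can be very slow, so $\psi^1(x_n)$ being close to $x_n$ does not directly force $\tau(x_n)$ to be small. The paper establishes uniform continuity of $\tau$ on all of $X$ using the quantitative tracking estimate of Proposition \ref{PRPSeparationFinitary}\eqref{itemCloseTracking}, which is the mechanism that lets one handle the fixed-point case. You should either invoke that estimate or restrict the fixed-point discussion until uniform continuity on the non-fixed locus has been established.
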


An application of the above result is for Axiom A flows that are quasitransverse, where an Axiom A flow is \emph{quasi\-transverse} if\/ $T_x W^u(x)\cap T_x W^s(x)=\{0\}$ for all\/ $x\in M$.  These flows are expansive (hence kinematic-expansive) and have finitely many chain components, each transitive.  Note that Anosov flows (transitive or not) are quasi\-transverse Axiom A flows.

\begin{corollary}\label{CORQuasiCentralizer}
A quasi\-transverse Axiom A flow has trivial \(C^0\) \R-centralizer.
\end{corollary}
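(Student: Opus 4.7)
The plan is to apply Theorem \ref{t.expansive} directly, verifying each of its hypotheses for a quasi\-transverse Axiom A flow \(\Phi\) on a closed manifold \(M\). The paragraph immediately preceding the corollary already asserts the two main ingredients: that such flows are expansive (a classical consequence of quasi\-transversality, since the condition \(T_xW^u(x)\cap T_xW^s(x)=\{0\}\) prevents the local-product structure on the nonwandering set from having flat directions that would allow distinct orbits to stay close in a reparameterized sense), and that the chain components are finite in number and each topologically transitive (spectral decomposition for Axiom A flows plus the no-cycles implicit in quasi\-transversality).

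Thus the first step is simply to record that expansivity implies kinematic expansivity, which is the direction of implication made explicit in the introduction (kinematic expansivity is called a ``substantial weakening of expansivity''). Second, the manifold \(M\) is connected by the standing hypothesis that a flow lives on a closed manifold (or else one argues component by component, since the connected components of \(M\) are finite in number and each is \(\Phi\)-invariant). Third, there are finitely many — hence at most countably many — chain components, and each is transitive by the spectral decomposition for quasi\-transverse Axiom A flows.

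With these three hypotheses in hand, the last sentence of Theorem \ref{t.expansive} applies verbatim and yields that \(\Phi\) has trivial \(C^0\) \(\R\)-centralizer, which is exactly the conclusion of the corollary.

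I do not anticipate a substantive obstacle here, as the corollary is essentially a dictionary translation from the technical hypotheses of Theorem \ref{t.expansive} into the classical Axiom A vocabulary. The only point at which a reader might want more detail is the statement that quasi\-transverse Axiom A flows are expansive; if the earlier literature is not cited at that spot, a one-line reminder that expansivity for such flows follows from the hyperbolic structure on the nonwandering set together with the transversality of \(W^s\) and \(W^u\) along orbits would close the gap.
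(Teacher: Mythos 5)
Your proposal matches the paper's approach exactly: the paper supplies no separate proof of the corollary but records, in the paragraph immediately preceding it, precisely the facts you verify (quasitransverse Axiom A flows are expansive, hence kinematic-expansive, and have finitely many transitive chain components), after which the last sentence of Theorem \ref{t.expansive} applies directly. One small caution about your parenthetical fallback: arguing component by component on a disconnected \(M\) would only give \(\psi^t=\varphi^{c_i t}\) with a possibly different constant \(c_i\) on each component, which is not a trivial \(\R\)-centralizer, so the connectedness hypothesis in Theorem \ref{t.expansive} is genuinely needed and is here supplied by the standing convention that the underlying manifold is connected.
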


The ``elementary'' reason for this is that commuting flows act ``isometrically'' on each other's orbits, and this is incompatible with expansivity (or hyperbolicity) unless the orbits coincide; a less elementary and more explicit argument invokes uniqueness in structural stability (Theorem \ref{thmstrucstabilityhypsets}). In either approach it remains to make the orbitwise ``isometries'' coherent (see page \pageref{PROPCentralizerCloseToIdentity}). We do this in the \(C^0\)-category where neither approach is viable. We emphasize again that there is not exactly an abundance of results about \(C^0\)-centralizers because the constraints from differentiability make these issues much more manageable (but see e.g.,  \cite{RochaC0}).

\subsection{Diffeomorphisms commuting with flows}
Returning to the point of view that we are studying the symmetry group of a flow leads us to the core question of which diffeomorphisms or homeomorphisms (rather than flows) commute with a flow, and thereby to our main results. We first note two underlying facts for \emph{continuous} flows. Refining an argument by Walters shows that the symmetry group of sufficiently ``intricate'' flows is essentially discrete, but unlike his result, ours uses a less restrictive version of expansivity called kinematic expansivity; this is done in parts \eqref{itemFixOrbits}, \eqref{itemOrbitTranslation} \& \eqref{itemCtsShift} of Proposition \ref{PROPCentralizerCloseToIdentity} below and gives:
\begin{theorem}\label{THMWaltersCtsTime}
Kinematic-expansive flows have quasidiscrete centralizer: if \(\Phi\) is a kinematic-expansive (Definition \ref{d.expansive}) continuous flow on \(X\), \(\epsilon>0\), \(\delta>0\) a separation constant for \(\epsilon\), \(f\circ\varphi^t=\varphi^t\circ f\), \(d_{C^0}(f,\mathrm{Id})<\delta\), then there is a continuous \(\Phi\)-invariant \(T \colon X\to\R\) such that $f(\cdot)=\varphi^{T(\cdot)}(\cdot)$.
\end{theorem}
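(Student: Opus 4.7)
The plan is to prove three successive claims: each $\Phi$-orbit is preserved by $f$; the translation amount $T(x)$ defined by $f(x) = \varphi^{T(x)}(x)$ is $\Phi$-invariant; and $T$ is continuous. For \emph{orbit preservation}, fix $x \in X$. Applying the commutation relation $f\circ\varphi^t = \varphi^t\circ f$ together with $d_{C^0}(f,\mathrm{Id})<\delta$,
$$d(\varphi^t x, \varphi^t f(x)) = d(\varphi^t x, f(\varphi^t x)) < \delta$$
for every $t\in\R$. Since $\delta$ is a separation constant for the kinematic-expansivity constant $\epsilon$, the definition of kinematic expansivity furnishes $T(x)\in\R$ with $|T(x)|\le\eta$ for a prescribed small $\eta$ and $f(x)=\varphi^{T(x)}(x)$. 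Away from fixed points and periodic orbits of period at most $2\eta$ this value is unique; at those exceptional points I would choose $T(x)$ canonically as the element of $[-\eta,\eta]$ of smallest absolute value, so that $T(x)=0$ at fixed points of $\Phi$.

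For \emph{$\Phi$-invariance}, the commutation relation again yields
$$\varphi^{T(\varphi^t x)}(\varphi^t x) = f(\varphi^t x) = \varphi^t f(x) = \varphi^{T(x)}(\varphi^t x),$$
so uniqueness of $T$ on non-exceptional orbits forces $T(\varphi^t x) = T(x)$; along exceptional (fixed or short-periodic) orbits the canonical choice delivers the same identity.

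\emph{Continuity} is the main obstacle. Given $x_n\to x$, the values $T(x_n)$ lie in the compact interval $[-\eta,\eta]$, so some subsequence converges to a limit $T^*$. By joint continuity of $\Phi$ and continuity of $f$,
$$\varphi^{T^*}(x) = \lim_n \varphi^{T(x_n)}(x_n) = \lim_n f(x_n) = f(x) = \varphi^{T(x)}(x),$$
whence $\varphi^{T^*-T(x)}(x) = x$. For $x$ not a fixed or short-periodic point, uniqueness from the first step forces $T^* = T(x)$, so the entire sequence $T(x_n)$ converges to $T(x)$. The delicate case is a fixed point $x$ of $\Phi$, where $\varphi^{T^*-T(x)}(x)=x$ is automatic and therefore does not pin down $T^*$. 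Here I would use kinematic expansivity to show that any periodic orbit in a small neighborhood of a fixed point cannot remain $\epsilon$-close to that fixed point for all time, so its orbit excursion is bounded below; combined with the smallest-$|T|$ selection in $[-\eta,\eta]$ and $f(x_n)\to x$, this forces $T(x_n)\to 0 = T(x)$, completing continuity.
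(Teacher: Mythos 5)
Your first two steps---orbit-preservation via $d(\varphi^t x, f(\varphi^t x))<\delta$ for all $t$ and kinematic expansivity, and $\Phi$-invariance of $T$ via the commutation relation and uniqueness of the time-shift below the least period---are correct and coincide with Proposition~\ref{PROPCentralizerCloseToIdentity}\eqref{itemFixOrbits}--\eqref{itemOrbitTranslation}. Your compactness/subsequence argument for continuity at non-fixed points also matches the argument the paper gives in the footnote to Proposition~\ref{PROPCentralizerCloseToIdentity}\eqref{itemCtsShift} (the ``no fixed points'' variant). The difficulty is exactly where you flag it: continuity at a fixed point $p$.

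Your proposed fix at $p$ does not work as stated. You invoke a statement about ``any periodic orbit in a small neighborhood of a fixed point,'' but the approximating points $x_n\to p$ have no reason to be periodic; the relevant orbits are generic orbits passing through a neighborhood of $p$ (by your own argument they eventually leave a fixed $\delta$-ball, but the exit time grows without bound). Even granting that such orbits leave a neighborhood of $p$, you give no mechanism by which this forces $T(x_n)\to 0$: the relation $\varphi^{T(x_n)}(x_n)=f(x_n)\to f(p)=p$ is satisfied by $x_n\to p$ regardless of the limiting value of $T(x_n)$, because $\varphi^s(x_n)\to p$ uniformly for $s$ in any bounded interval. A second, related issue is that prescribing $T(p)$ to be the element of smallest absolute value (hence $T(p)=0$) presupposes the answer; the continuous extension at $p$, if it exists, must be $\lim_{x\to p}\tau(x)$, and there is no a priori reason that limit is $0$ rather than some other value in $(-\epsilon,\epsilon)$. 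What is actually needed is a proof that the limit exists, i.e.\ that the oscillation of $\tau$ on small punctured neighborhoods of $p$ tends to zero.

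The paper handles this by a different mechanism: Proposition~\ref{PRPSeparationFinitary}\eqref{itemCloseTracking}, a quantitative ``finite-time tracking implies orbit-proximity'' consequence of kinematic expansivity (for every $\rho>0$ there is $T>0$ so that $\delta$-closeness of $\varphi^t(x)$ and $\varphi^t(y)$ for $|t|\le T$ already forces $d(y,\varphi^s(x))<\rho$ for some $|s|\le\epsilon$). This is used to establish a modulus of \emph{uniform} continuity for $\tau$ on the set of non-fixed points, from which the extension to fixed points follows by completion rather than by prescribing a value in advance. If you want to repair your argument you should prove and use a lemma of this flavor; the subsequence trick alone, which only exploits continuity of $\Phi$ and $f$ pointwise, cannot distinguish between the constraint at $p$ (which is vacuous, since $\varphi^s(p)=p$ for all $s$) and the constraint at nearby points.
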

Theorem \ref{THMWaltersCtsTime} is meant to be an indication of what we prove. We produce discreteness of the \(C^0\)-centralizer in rather greater generality (Propositions \ref{PROPCentralizerCloseToIdentity} and  \ref{PRPLocTrivCent} and Remark \ref{REMConstOfMotion}). This has the interesting application, in Theorem \ref{THMConjugacyUnique}, that any topological conjugacy to a transitive kinematic-expansive flow (say) is locally unique (unique when chosen near the identity). 

In this generality, one should not  expect trivial centralizer: the geodesic flow on the usual genus-2 surface has a finite symmetry group generated by reflection and rotation isometries of the double torus, and the suspension of \(\begin{psmallmatrix}2&1\\1&1\end{psmallmatrix}\) has the symmetry coming from \(x\mapsto-x\). These are expansive transitive flows and hence their \(C^1\)-perturbations have the same property and nontrivial \(C^0\) centralizer as well. By contrast,
our main results say that generic flows of the following kinds commute with no \emph{diffeomorphisms} other than time-$t$ maps of the flow.
\begin{notation}
Following \cite{PY89}, let $\mathcal{A}^r(M)$ be the set of\/ $C^r$ Axiom-A flows on a manifold $M$ that have no cycles and are not transitive Anosov,
and $\mathcal{A}^r_1(M)$ be the set of\/ $\Phi\in\mathcal{A}^r(M)$ with a fixed or periodic sink or source (i.e.,  periodic attractor or repeller).
\end{notation}
Specifically, for the latter class, indeed an open dense set of such flows has trivial centralizer:
\begin{theorem}\label{t.periodictrivial}
For a \(C^1\)-open and \(C^\infty\)-dense set
$\mathcal{O}$ of \(\Phi\in\mathcal{A}^\infty_1(M)\) we have \(\mathcal{C}^\infty(\Phi)=\{\varphi^t\mid t\in\R\}\), that is, if\/ $f\in \mathcal{C}^\infty(\Phi)$, then $f=\varphi^t$ for some $t\in\R$.
\end{theorem}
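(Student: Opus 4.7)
The strategy exploits the distinguished periodic sink (or source) of $\Phi \in \mathcal{A}^\infty_1(M)$ together with the density of periodic orbits on the other basic sets: any $f \in \mathcal{C}^\infty(\Phi)$ is forced to coincide with a single time-$t_0$ map on the chain-recurrent set, after which centralizer rigidity propagates this to all of $M$. The plan is to define $\mathcal{O} \subset \mathcal{A}^\infty_1(M)$ by $C^1$-open and $C^\infty$-dense Kupka--Smale-type conditions: (i) all periodic orbits of $\Phi$ have pairwise distinct minimal periods; (ii) the Poincar\'e return map at every periodic sink or source has simple, non\-resonant spectrum, so a $C^\infty$ Sternberg linearization of $\Phi$ exists in a tubular neighborhood; (iii) generic heteroclinic structure between the basic sets (in the spirit of \cite{PY89, Fis1}) forbids nontrivial scaling symmetries relating distinct pieces of the chain-recurrent set.

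Given such $\Phi$ and $f \in \mathcal{C}^\infty(\Phi)$, the no-cycles filtration is $f$-invariant and $f$ permutes the basic sets preserving hyperbolic type; condition (i) then forces $f$ to fix each periodic sink orbit $\gamma_0$ setwise, and $f|_{\gamma_0}$ commutes with the flow rotation, giving $f|_{\gamma_0} = \varphi^{t_0}|_{\gamma_0}$ for some $t_0 \in \R$. On each nontrivial hyperbolic basic set $\Lambda$ the same condition (i) makes every periodic orbit $\gamma \subset \Lambda$ individually $f$-invariant with $f|_\gamma = \varphi^{t_\gamma}|_\gamma$, and density of periodic orbits in $\Lambda$ combined with continuity of $f$ forces $t_\gamma$ to be constant, say equal to $t_\Lambda$.

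Set $g \dfn \varphi^{-t_0} \circ f$, which commutes with $\Phi$ and fixes $\gamma_0$ pointwise; it suffices to show $g = \mathrm{id}$. In the Sternberg coordinates of (ii), writing the flow as $(\theta, v) \mapsto (\theta + t, e^{tL}v)$, the commutation $g_*X = X$ together with $g|_{\gamma_0} = \mathrm{id}$ forces $g$ on a tubular neighborhood $U$ of $\gamma_0$ to have the form $(\theta, v) \mapsto (\theta, Mv)$ with $M$ a constant matrix commuting with $e^{pL}$ (where $p$ is the period of $\gamma_0$), hence diagonal in the eigenbasis by (ii). The heteroclinic-genericity condition (iii), combined with the constancy of $g|_\Lambda = \varphi^{t_\Lambda - t_0}|_\Lambda$ on the other basic sets (which exist since $\Phi$ is not transitive Anosov), forces $t_\Lambda = t_0$ for every $\Lambda$ and $M = \mathrm{Id}$. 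Hence $g = \mathrm{id}$ on $U$; since $W^s(\gamma_0) = \bigcup_{t \ge 0} \varphi^{-t}(U)$, the commutation yields $g = \mathrm{id}$ on this open basin, and the paper's centralizer-rigidity result (a diffeomorphism commuting with a generic hyperbolic flow is determined by its values on any nonempty open set) then upgrades this to $g = \mathrm{id}$ globally, so $f = \varphi^{t_0}$.

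The hard part will be simultaneously securing the $C^1$-openness and $C^\infty$-density in (ii) and (iii): non\-resonance of the Poincar\'e-map eigenvalues is only residually generic, and must be refined to a quantitative inequality that persists under $C^1$-perturbation while continuing to rule out nontrivial centralizing scalings $M$, and the heteroclinic condition (iii) must be formulated so that the global consistency $t_\Lambda = t_0$ is a robust open condition rather than merely residual. Both points are in the spirit of the arguments of \cite{PY89, Fis1} for diffeomorphisms, but their translation to flows requires handling the extra continuous time parameter and the possibility of relative time-shifts between basic sets.
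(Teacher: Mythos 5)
Your overall scaffolding matches the paper's: use the distinguished periodic sink/source, show a commuting \(f\) preserves the basic sets (via distinct periods/nonconjugate derivatives), Sternberg-linearize near the sink orbit so that \(g=\varphi^{-t_0}\circ f\) has constant diagonal linear part \(M\) there, and then propagate \(g=\mathrm{id}\) from the basin to all of \(M\) by the rigidity theorem. That portion of your argument tracks Lemmas \ref{LEMCentralizerPeriodicPoints}, \ref{LEMCentralizerStableManifolds}, Remark \ref{REMPerturbToFixOrbits}, and Theorems \ref{THMsimultaneouslinearization}, \ref{THMCentralizerAttractor}, \ref{THMopensetscentralizers}.

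The gap is condition (iii). You write that a ``generic heteroclinic structure\ldots forbids nontrivial scaling symmetries\ldots forces \(t_\Lambda=t_0\) for every \(\Lambda\) and \(M=\mathrm{Id}\),'' and later acknowledge that making this a robust open condition is the ``hard part''---but you never actually formulate (iii) nor explain \emph{why} such a condition can be achieved by a \(C^1\)-open, \(C^\infty\)-dense set of perturbations. This is not a technical loose end; it is the core of the theorem and occupies nearly all of Section \ref{SBSNoCompactPart}. The paper does not argue via consistency of phase shifts \(t_\Lambda\) across basic sets. Instead, after Lemma \ref{LEMpowerofflow} reduces the statement to \(\bar g=1_{Z_0}\), it works entirely in the orbit space of \(W^s(p)\) near the sink, defines a concrete ``exceptional set'' (the projected heteroclinic set \(\tilde J(p,\Phi)\), or the foliation \(\tilde{\mathcal F}\), or the identification of \(S(p)\) with \(S(q)\), depending on which of three cases the basin falls into), shows \(\bar g\) must preserve it, and then perturbs the flow in a fundamental domain to destroy any nontrivial symmetry. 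Crucially this is split into the compact part \(Z_1\) (Lemma \ref{LEMnowheredense}) and the noncompact part \(Z_0\setminus Z_1\) (Proposition \ref{PROPtrivial}, using the projections \(\tilde\pi_\Gamma\) and the accumulation Lemmas \ref{LEMaccumulation}, \ref{LEMsubpsaces}) precisely because breaking a compact group of symmetries and breaking a noncompact one are qualitatively different perturbation problems; your single umbrella ``condition (iii)'' hides this distinction.

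A second, smaller issue: the claim ``density of periodic orbits in \(\Lambda\) combined with continuity of \(f\) forces \(t_\gamma\) to be constant'' is true for hyperbolic basic sets (via expansivity), but it is not used in the paper's proof of this theorem and does not by itself interact with the linear part \(M\) near \(\gamma_0\); there is no a priori reason \(t_\Lambda\) on a saddle-type basic set determines the diagonal matrix \(M\) on \(W^s(\gamma_0)\) without a concrete link through the basin structure. The paper supplies that link by working in the orbit space, which is exactly what your sketch omits.
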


The perturbations performed are done on the wandering points, and this is why our construction does not work for transitive Anosov flows.  More specifically, while it is easy to force a commuting diffeomorphism to send each periodic orbit to itself, there is no control over the way each orbit is shifted, and in the presence of recurrence this creates problems.

Without assuming the presence of a fixed or periodic sink or source this conclusion remains true in low-dimensional situations; in full generality we obtain trivial centralizer for generic such flows.
\begin{theorem}\label{t.residualtrivial}
\(\mathcal{C}^\infty(\Phi)=\{\varphi^t\mid t\in\R\}\) for  a residual set $\mathcal{R}$ of \(\Phi\in\mathcal{A}^\infty(M)\).
If\/ $\dim(M)=3$, then this holds for an open and dense set $\mathcal{U}\subset\mathcal{A}^\infty(M)$.
\end{theorem}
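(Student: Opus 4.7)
The plan is to combine Smale's spectral decomposition of an Axiom-A no-cycles flow with the kinematic-expansive rigidity on each basic set (Theorem \ref{THMWaltersCtsTime} and Proposition \ref{PROPCentralizerCloseToIdentity}) and the connectivity provided by heteroclinic orbits in the wandering set (available since $\Phi$ is not transitive Anosov on a connected $M$), forcing any $C^\infty$-diffeomorphism commuting with $\Phi$ to be a time-$t$ map, once a countable family of $C^\infty$-generic conditions is imposed on $\Phi$.

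First I would fix $\Phi \in \mathcal{A}^\infty(M)$ with spectral decomposition $\Omega(\Phi) = \Lambda_1 \sqcup \cdots \sqcup \Lambda_k$, each $\Lambda_i$ a transitive hyperbolic basic set. Any $f \in \mathcal{C}^\infty(\Phi)$ preserves $\Omega(\Phi)$ and permutes the basic sets consistently with the no-cycles partial order. Imposing the $C^\infty$-generic condition that all periodic orbits of period below $n$ have pairwise distinct periods (each pairwise condition is $C^1$-open by persistence of hyperbolic periodic orbits and $C^\infty$-dense by small targeted perturbations), every such $f$ must preserve each periodic orbit, and hence fix each $\Lambda_i$ setwise.

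Next, on each $\Lambda_i$ the restricted flow is transitive and (kinematic-)expansive. After pre-composing $f|_{\Lambda_i}$ with an appropriate $\varphi^{-\tau_i}$ to place it inside the $\delta$-neighborhood of the identity (possible after further genericity ruling out nontrivial discrete symmetries in the centralizer), Theorem \ref{THMWaltersCtsTime} gives $f|_{\Lambda_i} = \varphi^{T(\cdot)}|_{\Lambda_i}$ with $\Phi$-invariant continuous $T$; by transitivity $T$ is constant, so $f|_{\Lambda_i} = \varphi^{\tau_i}|_{\Lambda_i}$. To match the $\tau_i$ across basic sets, use the wandering set: for $x$ wandering with $\alpha$- and $\omega$-limits in $\Lambda_i$ and $\Lambda_j$ respectively, under a generic condition preventing $f$ from permuting heteroclinic orbits nontrivially, $f$ maps the $\Phi$-orbit of $x$ to itself, so $f(x) = \varphi^{T(x)}(x)$ with $T$ constant along the orbit (by commutation with $\Phi$). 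Continuity at both limits forces $T = \tau_i = \tau_j$. Since the heteroclinic graph on $\{\Lambda_i\}$ is connected (using no-cycles and the fact that, on connected $M$, the non-Anosov case yields heteroclinics threading every basic set), all $\tau_i$ coincide with a single $\tau$, giving $f = \varphi^\tau$ on $\Omega(\Phi)$ together with the wandering set, hence on $M$. Taking the countable intersection of the genericity conditions produces the residual set $\mathcal{R}$.

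For $\dim M = 3$, saddle basic sets have $1$-dimensional stable and unstable bundles beyond the flow direction, so transverse intersections $W^u(\Lambda_i) \pitchfork W^s(\Lambda_j)$ are locally rigid and heteroclinic orbits form discrete sets; the genericity conditions above then become $C^\infty$-open-and-dense, upgrading the residual conclusion to the open-and-dense $\mathcal{U}$. The main obstacle throughout is preventing $f$ from permuting heteroclinic orbits within the moduli between each pair $(\Lambda_i,\Lambda_j)$: this is automatic in 3D because the intersections are discrete, but in general dimension it requires wandering-set perturbations that break any would-be symmetry of the heteroclinic moduli, formalized through the Baire-category construction that produces $\mathcal{R}$.
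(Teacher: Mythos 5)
Your proposal founders on the key step, which you dispose of in a parenthetical: ``(possible after further genericity ruling out nontrivial discrete symmetries in the centralizer).'' That parenthetical \emph{is} the theorem. Kinematic expansivity plus transitivity only yields that a commuting diffeomorphism $C^0$-\emph{close to the identity} is a time-$t$ map (Theorem~\ref{THMWaltersCtsTime}, Proposition~\ref{PRPLocTrivCent}); for a diffeomorphism $f$ far from the identity there is no a priori $\varphi^{-\tau_i}$ bringing $f|_{\Lambda_i}$ near the identity unless you already know the quotient of the centralizer by $\{\varphi^t\}$ is trivial, which is the conclusion. The paper even exhibits the obstruction explicitly: the suspension of $\begin{psmallmatrix}2&1\\1&1\end{psmallmatrix}$ carries the smooth involution coming from $x\mapsto -x$, and the genus-2 geodesic flow has a genuine finite symmetry group, so transitive expansive flows robustly have nontrivial centralizer at the $C^0$ level; no $C^0$-type argument (and your proposal is entirely $C^0$ in spirit, resting on expansivity and matching shifts along heteroclinics) can produce triviality. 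The actual proof requires $C^1$ information: Lemma~\ref{LEMCentralizerPeriodicPoints} (a commuting diffeomorphism conjugates derivatives at periodic points), Sternberg linearization (Theorems~\ref{THMSternbergMapsLinearization}, \ref{THMsimultaneouslinearization}) which simultaneously linearizes $f$ and $\Phi$ in a basin, and a reduction to the Lie group $Z_0$ of matrices commuting with the linearization, followed by an explicit perturbation scheme (Section~\ref{SBSNoCompactPart}, split into compact part $Z_1$ and noncompact part $Z_0\smallsetminus Z_1$) that uses the geometry of ``exceptional sets'' in the orbit space to break all remaining symmetries. None of this appears in your outline, and it cannot be replaced by wandering-set perturbations acting on heteroclinic moduli: the group to be killed is a continuous Lie group, not a countable set of candidate symmetries, so ``countable intersection of generic conditions'' does not apply the way you sketch it.

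Your treatment of the $\dim M = 3$ case also has the wrong mechanism: discreteness of transverse heteroclinic intersections is not what upgrades from residual to open-and-dense (one has it in any dimension under strong transversality). The paper exploits that with one-dimensional stable and unstable bundles, the Lie group $Z_0$ is two-dimensional and any noncompact element $h\in Z_0 \smallsetminus Z_1$ can be combined with a power of the linearized return map to produce a contraction, which is incompatible with the discrete invariant set $\tilde{J}(p,\Phi)$ of homoclinic data. This algebraic lemma is the reason for the sharpening in dimension~3, and it has no counterpart in your argument. The piece of your proposal that does overlap with the paper is the use of distinct periods to force setwise invariance of basic sets (Lemma~\ref{LEMPerturbToFixOrbits}) and the general idea of linking basins; everything from ``pre-composing with $\varphi^{-\tau_i}$'' onward needs to be replaced by the linearization-and-Lie-group machinery.
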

One of our auxiliary results was conjectured in \cite[p.\ 83]{PY89} for discrete time and is of independent interest:
there is an open dense set of\/ $\Phi$ in $\mathcal{A}^\infty(M)$ with centralizer-rigidity.
\begin{theorem}[Rigidity]\label{THMopensetscentralizers}
If \(\Phi\in\mathcal{V}\subset\mathcal{A}^\infty(M)\) as in Proposition \ref{prop.perturbationconnecting}, $f_1, f_2\in \mathcal{C}^\infty(\Phi)$, and $f_1=f_2$ on a nonempty open set, then $f_1=f_2$ on $M$.
\end{theorem}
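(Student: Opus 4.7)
The plan is to reduce to a single map by setting $g:=f_1^{-1}\circ f_2$. Since $\mathcal{C}^\infty(\Phi)$ is a group, $g$ commutes with $\Phi$; moreover $g=\mathrm{Id}$ on the open set $U$ where $f_1=f_2$. It suffices to show $\mathrm{Fix}(g)=M$. Because $g$ commutes with $\Phi$, the closed set $\mathrm{Fix}(g)$ is $\Phi$-invariant, so it contains the open $\Phi$-saturation $W:=\bigcup_{t\in\R}\varphi^t(U)$ and its closure.

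The next step is to pin down the basic sets. For $\Phi\in\mathcal{A}^\infty(M)$ the nonwandering set is a finite disjoint union of basic sets, and the no-cycles hypothesis induces a partial order on them with attracting and repelling extremes. Choose an attractor $\Lambda^+$ whose open basin meets $U$ and a point $y\in U\cap W^s(\Lambda^+)$ with $\omega(y)=\Lambda^+$ (the set of such $y$ is residual in the basin by topological transitivity of $\Lambda^+$). Then $\Lambda^+\subseteq\overline{W}\subseteq\mathrm{Fix}(g)$. Moreover, for any $x\in\Lambda^+$ there is a sequence $t_n\uparrow\infty$ with $\varphi^{t_n}(y)\to x$, and each $\varphi^{t_n}(U)\subseteq\mathrm{Fix}(g)$ is an open $M$-neighborhood of $\varphi^{t_n}(y)$. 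Hence $\mathrm{Fix}(g)$ contains open subsets of $M$ arbitrarily close to every point of $\Lambda^+$. The symmetric argument using backward flow gives the same conclusion for some repeller $\Lambda^-$.

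The central step is to extend this to every basic set and then to all of $M$. This is where $\Phi\in\mathcal{V}$ (Proposition \ref{prop.perturbationconnecting}) is essential: it supplies the density of heteroclinic connections needed to make the $\Phi$-saturation of open subsets of $\mathrm{Fix}(g)$ accumulate on every basic set. Climbing the partial order of basic sets then produces open subsets of $\mathrm{Fix}(g)$ arbitrarily close to every point of every basic set. Since $M=\bigsqcup_i W^s(\Lambda_i)$ and every point of $M$ is thus a limit of points on which $g$ is already the identity, closedness of $\mathrm{Fix}(g)$ forces $\mathrm{Fix}(g)=M$.

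The main obstacle is this last propagation step. The hyperbolic dynamics can contract or expand neighborhoods dramatically along heteroclinic orbits, so care is needed to verify that the open sets on which $g=\mathrm{Id}$ remain open and hit the next basic set after each jump, and to extract, from pointwise identity on a basic set, an open $M$-neighborhood on which $g=\mathrm{Id}$. The smoothness of $g$ and its commutation with $\Phi$, combined with the specific connecting structure supplied by $\mathcal{V}$, are what make this iteration go through.
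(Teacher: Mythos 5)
Your framework matches the paper's: reduce to $g\dfn f_1^{-1}\circ f_2\in\mathcal{C}^\infty(\Phi)$ with $g=\mathrm{Id}$ on an open $U$, and use Proposition~\ref{prop.perturbationconnecting} to hop between basins. But the crucial step is missing, and you have in fact flagged it yourself without filling it. You show only that $\operatorname{Fix}(g)$ contains open sets accumulating on every basic set (in fact only on the basic sets themselves), which is strictly weaker than density of $\operatorname{Fix}(g)$ in $M$: the iterates $\varphi^{t_n}(U)$ get crushed in the stable directions as $t_n\to\infty$, so their union need not fill out a full neighborhood of $\Lambda^+$, let alone the whole basin. In a linearized picture, $\bigcup_t\varphi^t(U)$ is a flow-invariant cone-shaped region, not all of $W^s(\Lambda^+)\smallsetminus\Lambda^+$; so one cannot conclude $\operatorname{Fix}(g)\supseteq W^s(\Lambda^+)$ from this alone.

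The paper closes exactly this gap with Theorem~\ref{THMCentralizerAttractor}: if $g\in\mathcal{C}^\infty(\Phi)$ equals the identity on an open subset of $W^s(\Lambda)$ for a transitive hyperbolic attractor $\Lambda$ containing a \emph{nonresonant} fixed or periodic point, then $g=\mathrm{Id}$ on the entire basin $W^s(\Lambda)$. This is where the real work lives, and it is decidedly not a soft topological argument: the nonresonance (guaranteed by the definition of $\mathcal{U}_0\supseteq\mathcal{U}\supseteq\mathcal{V}$) feeds the Sternberg linearization (Theorem~\ref{THMsternberg} / Corollary~\ref{CORsternbergflows}), and then the key algebraic fact (Theorem~\ref{THMsimultaneouslinearization}) that anything commuting with a nonresonant linear contraction is itself \emph{linear}---so a commuting map equal to the identity on an open set must be the identity globally. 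Your sketch never mentions nonresonance or linearization and therefore does not have the input required to make the "propagation" work; the appeal to "smoothness of $g$ and its commutation with $\Phi$, combined with the specific connecting structure" is precisely the content that Theorem~\ref{THMCentralizerAttractor} makes precise and that your write-up leaves as a wish. Once you have Theorem~\ref{THMCentralizerAttractor}, the paper's remaining step is essentially what you describe: $g=\mathrm{Id}$ on $W^s(\Lambda_i)$ for one attractor, then Proposition~\ref{prop.perturbationconnecting} supplies a linking repeller $\Lambda_r$ whose unstable basin overlaps both $W^s(\Lambda_i)$ and $W^s(\Lambda_j)$ in open sets, so Theorem~\ref{THMCentralizerAttractor} (applied to $\Lambda_r$ and then $\Lambda_j$) propagates $g=\mathrm{Id}$ to $W^u(\Lambda_r)\cup W^s(\Lambda_j)$, and iterating covers the open dense set of points that lie simultaneously in an attractor's and a repeller's basin; continuity finishes.
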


\noindent{\bf Acknowledgement:} We would like to express our gratitude to Davi Obata who called our attention to the possibility of weakening expansivity as a hypothesis of our results in topological dynamics.
\section{Background}
We review some basic notions pertinent to hyperbolic sets and expansivity and introduce kinematic expansivity.
\begin{definition}[Nonwandering, transitivity, expansivity \cite{BowenWalters,FH}]\label{d.expansive}\label{DEFtransitive}
A point \(x\) is \emph{nonwandering} for a flow \(\Phi\) on \(X\) if \(x\in
U\) open, $T>0\Rightarrow\exists t>T$ with $\varphi^t(U)\cap U\neq\emptyset$; otherwise it is said to be
\emph{wandering}. The (closed) set of nonwandering points is denoted by
$\NW(\Phi)$.
The $\omega$-\emph{limit set} of\/ $x\in M$ is \(\omega(x)\dfn\bigcap_{t\ge0}\overline{\varphi^{[t,\infty)}(x)}\subset\NW(\Phi)\) and the $\alpha$-\emph{limit set} is
\(\alpha(x)\dfn\bigcap_{t\le0}\overline{\varphi^{(-\infty,t]}(x)}\subset\NW(\Phi)\).
The \emph{limit set} of \(\Phi\) is \(L(\Phi)\dfn\overline{\bigcup_{x\in X}\alpha(x)\cup\omega(x)}\). \(\Phi\) is said to be (topologically) \emph{transitive} if there is a dense forward semiorbit \(\varphi^{[0,\infty)}(x)\).

\(\Phi\) is \emph{expansive}\index{expansive!flow|textbf} (or Bowen--Walters expansive for emphasis) if for all\/ $\epsilon>0$ there is a $\delta>0$, called an \emph{expansivity constant}
(for $\epsilon$),
such that if $x,y\in X$, $s\colon\R\to\R$ continuous, $s(0)=0$, and $d(\varphi^t(x),\varphi^{s(t)}(y))<\delta\ \forall t\in\R$, then $y=\varphi^t(x)$ for some $|t|<\epsilon$.

It is \emph{kinematic-expansive}\index{separating!flow|textbf} if for all\/ $\epsilon>0$ there is a $\delta>0$, called an \emph{separation constant}
(for $\epsilon$),
such that if $x,y\in X$ and $d(\varphi^t(x),\varphi^t(y))<\delta\ \forall t\in\R$,
then $y=\varphi^t(x)$ for some $|t|<\epsilon$. (Since this does not allow reparameterizations, it requires only a ``kinematic'' separation of orbits rather than a ``geometric'' one.)

It is \emph{separating}\index{separating!flow|textbf} \cite[Definition 2.3]{LOS} if there is a $\delta>0$
such that if $x,y\in X$ and $d(\varphi^t(x),\varphi^t(y))<\delta\ \forall t\in\R$, then $y\in\varphi^\R(x)$.
\end{definition}
\begin{remark}
These three expansivity are progressively less restrictive (see also \cite{Artigue,Gura1,Gura2}), and for our arguments the differences are manifest in connection with fixed points.
Expansivity is a classical notion due to Bowen and Walters; it implies that (without loss of generality) there are no fixed points---they are isolated points of \(X\) \cite[Remark 1.7.3]{FH}. Kinematic expansiveness deals naturally with the presence of fixed points. Being separating suffices by itself for several arguments, and for others it does so if one also assumes the absence of fixed points. 

Kinematic expansivity is not invariant under orbit-equivalence or time-changes \cite[Tables 1, 2]{Artigue}; the property of a flow that all time-changes are kinematic-expansive is strong kinematic expansivity (likewise with ``separating''). Even this is more general than Bowen--Walters expansivity.
\end{remark}
\begin{proposition}[\cite{Gura2}]\label{PRPGura}
The horocycle flow of a negatively curved surface is strongly separating (but not expansive).
\end{proposition}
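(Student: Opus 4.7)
The plan is to establish the two assertions (strong separating, and failure of Bowen--Walters expansivity) by working in the universal cover of the unit tangent bundle.

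For non-expansivity I would produce an explicit reparameterized pair. In the upper half-plane model with the standard identification $T^1\mathbb{H}^2\cong\mathrm{PSL}(2,\R)$, take $\tilde x=(i,\uparrow)$ and $\tilde y=(ci,\uparrow)$ for $c>1$ close to $1$: these sit on distinct concentric horocycles at hyperbolic distance $|\log c|$. A direct computation gives $h^s\tilde x=(s+i,\uparrow)$ and $h^s\tilde y=(cs+ci,\uparrow)$, so the reparameterization $s(t)=t/c$ yields $d(h^t\tilde x,h^{s(t)}\tilde y)\equiv|\log c|$ for every $t\in\R$. Since $|\log c|$ can be made smaller than any prescribed expansivity constant while $\tilde y$ is not of the form $h^u\tilde x$ for any $u$, Bowen--Walters expansivity fails.

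For strong separating, let $\Psi$ be any time change of the horocycle flow generated by $\alpha\cdot X_h$ with $\alpha\colon T^1M\to\R$ continuous and positive, and suppose $d(\Psi^s x,\Psi^s y)<\delta$ for all $s\in\R$ with $\delta$ less than half the injectivity radius. The closest-lift selection is continuous in $s$, so I may fix lifts $\tilde x,\tilde y$ with $d(\Psi^s\tilde x,\Psi^s\tilde y)<\delta$ throughout. Writing $\Psi^s\tilde x=h^{\tau_{\tilde x}(s)}\tilde x$ and $\Psi^s\tilde y=h^{\tau_{\tilde y}(s)}\tilde y$, setting $t=\tau_{\tilde x}(s)$ and $\sigma=\tau_{\tilde y}\circ\tau_{\tilde x}^{-1}$, the hypothesis becomes $d(h^t\tilde x,h^{\sigma(t)}\tilde y)<\delta$ for all $t\in\R$ with $\sigma\colon\R\to\R$ a continuous monotonic bijection.

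The key geometric input is that in the universal cover, the horocycle of $\tilde y$ cannot remain $\delta$-close to that of $\tilde x$ under any reparameterization $\sigma$ unless the two horocycles coincide or $\sigma$ has a specific nontrivial linear growth rate. Using the local product decomposition of the Anosov geodesic flow, I would decompose $\tilde y$ relative to $\tilde x$ into horocycle, geodesic, and dual-horocycle components; the $\mathrm{SL}(2,\R)$ commutation relation $h^sg^th^{-s}=g^th^{s(e^{-t}-1)}$, or its variable-curvature analog via uniform hyperbolicity of the geodesic flow and holonomy estimates for the strong unstable foliation, shows that whenever $\tilde y$ is off the horocycle leaf of $\tilde x$, $d(h^t\tilde x,h^{\sigma(t)}\tilde y)\to\infty$ unless $\sigma(t)/t$ tends to a specific value $\lambda\ne 1$ determined by the non-horocycle part of $\tilde y$. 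The final step invokes unique ergodicity of the horocycle flow on the compact surface $M$, whereby the Birkhoff averages of $\alpha$ converge uniformly and give $\tau_x(s)=\bar\alpha\cdot s+o(s)$ uniformly in $x$; hence $\sigma(t)=t+o(t)$, incompatible with any $\sigma(t)\sim\lambda t$, $\lambda\ne 1$. The two horocycles must therefore coincide and $y$ lies on the $h$-orbit of $x$. The main obstacle I anticipate is making the divergence estimate quantitative enough in the variable-curvature setting to survive the $o(t)$ wobble in $\sigma$, where the explicit $\mathrm{SL}(2,\R)$ calculation must be replaced by holonomy estimates for the strong unstable foliation of the geodesic flow.
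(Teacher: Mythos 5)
The paper supplies no proof of Proposition \ref{PRPGura}; it is stated with a citation to Gura, so there is no internal argument to compare against. Your sketch does hit the two mechanisms that drive the result: defeating Bowen--Walters expansivity by matching two geodesically shifted horocycles (points on the same weak stable leaf of the geodesic flow) via a linear reparameterization, and invoking unique ergodicity of the horocycle flow (Furstenberg in constant, Marcus in variable curvature) so that for \emph{any} continuous time change the asymptotic slope $\sigma(t)/t$ is forced to be $1$, which then rules out a nonzero geodesic displacement between the two lifts. This is almost certainly the circle of ideas behind Gura's theorem.

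There are, however, real gaps. In the non-expansivity example you should check that the two chosen lifts descend to points of $T^1M$ that are not on the same $h$-orbit; this holds for generic $x$ and arbitrarily small $|\log c|$, but it needs to be said. More substantially, the dichotomy ``if $d(h^t\tilde x,h^{\sigma(t)}\tilde y)$ stays bounded then the unstable component of $\tilde y$ vanishes and $\sigma(t)/t$ converges to a definite $\lambda$'' is asserted, not proved. Your $\mathrm{SL}(2,\R)$ computation handles constant curvature, but in variable curvature the closed-form commutation relation must be replaced by uniform expansion and holonomy estimates for the strong stable and unstable foliations of the Anosov geodesic flow, and the resulting estimate must be stable against the sub-linear wobble in $\sigma$ that unique ergodicity leaves behind. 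You flag this yourself as the anticipated obstacle; it is precisely where the actual work of the theorem lies. As written the argument is a correct and well-aimed outline, but it does not close.
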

\begin{proposition}\label{PRPSeparationFinitary}
Suppose \(\Phi\) is a flow on a compact metric space \(X\).
\begin{enumerate}
\item\label{itemfiniteFP}If \(\Phi\) is separating, then fixed points are isolated, hence finite in number, and \label{itemNoSmallPeriods}\(\Phi\) does not have arbitrarily small positive periods.
\item\label{itemCloseTracking}If \(\Phi\) is kinematic-expansive and $\delta$ a separation constant for $\epsilon>0$, then for any $\rho>0$ there is a $T>0$ with
\[
d(\varphi^t(x),\varphi^t(y))<\delta\text{ for all }t\in[-T,T]\Rightarrow
d(y,\varphi^t(x))<\rho\text{ for some }t\in[-\epsilon,\epsilon];
\]
\end{enumerate}
\end{proposition}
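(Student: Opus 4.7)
The plan is to apply the separating and kinematic-expansivity hypotheses to especially rigid pairs---fixed or periodic points for part~\ref{itemfiniteFP}, and compactness limits for part~\ref{itemCloseTracking}.

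For part~\ref{itemfiniteFP}, I would apply the separating condition to pairs of fixed points, whose orbits never move. If $p, q \in \mathrm{Fix}(\Phi)$ with $d(p, q) < \delta$, then $d(\varphi^t p, \varphi^t q) = d(p, q) < \delta$ for every $t \in \mathbb{R}$, and separating forces $q \in \varphi^{\mathbb{R}}(p) = \{p\}$. Hence $\mathrm{Fix}(\Phi)$ is $\delta$-separated; being closed in the compact space $X$, it is finite. For the no-small-periods statement, suppose there were periodic orbits $\gamma_n$ of periods $\tau_n \to 0^+$. Uniform continuity of $\Phi$ on $X$ forces $\mathrm{diam}(\gamma_n) \to 0$: given $\rho > 0$, choose $\tau_0 > 0$ with $d(\varphi^r x, x) < \rho$ when $|r| < \tau_0$ (uniformly in $x$), so eventually $\gamma_n \subset B_\rho(p_n)$ for any $p_n \in \gamma_n$. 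Taking $p_n \to p$ along a subsequence and reducing $t$ modulo $\tau_n$ gives $d(\varphi^t p_n, p_n) < \rho$ for $n$ large, and the limit yields $d(\varphi^t p, p) \le \rho$ for every $\rho > 0$, so $p \in \mathrm{Fix}(\Phi)$. Choosing $\rho$ and $n$ so that $\rho + d(p_n, p) < \delta$, one has $d(\varphi^t p_n, \varphi^t p) = d(\varphi^t p_n, p) < \delta$ for all $t$, and separating forces $p_n = p$, contradicting $\tau_n > 0$.

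For part~\ref{itemCloseTracking}, the plan is compactness plus contradiction. Assume no $T$ works; for each $n$ pick $x_n, y_n \in X$ with $d(\varphi^t x_n, \varphi^t y_n) < \delta$ on $[-n, n]$ and $d(y_n, \varphi^t x_n) \ge \rho$ for $|t| \le \epsilon$. Pass to a subsequence with $(x_n, y_n) \to (x_*, y_*)$ in the compact space $X \times X$. For every fixed $t$, the first estimate holds for $n \ge |t|$, so the limit gives $d(\varphi^t x_*, \varphi^t y_*) \le \delta$ for all $t \in \mathbb{R}$; the second passes to $d(y_*, \varphi^t x_*) \ge \rho$ for $|t| \le \epsilon$, so $y_* \notin \varphi^{[-\epsilon, \epsilon]}(x_*)$. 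Kinematic expansiveness at scale $\epsilon$ then forces $y_* = \varphi^s(x_*)$ for some $|s| < \epsilon$, a contradiction producing the desired $T$.

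The main technical obstacle I expect is the strict-versus-non-strict issue: the compactness limit delivers only $d \le \delta$, whereas the definition of kinematic expansivity takes $d < \delta$ as hypothesis. I would manage this by passing through a slightly smaller separation constant $\delta_0 < \delta$---any positive $\delta_0 < \delta$ is still a separation constant for $\epsilon$---so that the limit gives $d \le \delta_0 < \delta$ as a genuinely strict estimate, and kinematic expansivity applies cleanly; this costs only a negligible loss in constants and does not affect the final statement.
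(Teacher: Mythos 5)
Your argument follows the paper's proof for both parts: for \eqref{itemfiniteFP}, you show that small-period orbits collapse onto their accumulation point (necessarily fixed) by uniform continuity and then contradict separating; for \eqref{itemCloseTracking}, you use the same diagonal/compactness contradiction. Your write-up of \eqref{itemfiniteFP} is actually a bit cleaner than the paper's---you treat the finiteness of fixed points and the absence of arbitrarily small positive periods as two distinct claims, whereas the paper handles them in one slightly compressed step---but the underlying ideas coincide.

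Your flag on the strict-versus-non-strict issue in \eqref{itemCloseTracking} is astute, and the paper's own proof glosses over it (after passing to the limit it asserts $d(\varphi^r(x),\varphi^r(y))<\delta$, though only $\le\delta$ follows). The fix as you wrote it doesn't quite close the gap, however: the contradiction pairs still satisfy $d<\delta$ on $[-n,n]$, so the limit yields only $\le\delta$, not $\le\delta_0$ for a smaller $\delta_0$. The actual repair is to replace $\delta$ by $\delta_0<\delta$ in the \emph{hypothesis} of the displayed implication, i.e., prove the version with $d(\varphi^t(x),\varphi^t(y))<\delta_0$ on $[-T,T]$; then the contradiction pairs have $d<\delta_0$, the limit gives $\le\delta_0<\delta$, and kinematic expansivity with separation constant $\delta$ applies. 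This proves a nominally weaker statement, but it is interchangeable in all the downstream uses (notably in Proposition~\ref{PROPCentralizerCloseToIdentity}\eqref{itemCtsShift}), which only need \emph{some} working pair $(\delta,T)$.
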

\begin{proof}
\eqref{itemNoSmallPeriods}
For \(\delta\) as in the definition let \(\eta>0\) be such that \(d_{C^0}(\varphi^t,\textrm{id})<\delta\) for \(|t|<\eta\). If \(\Phi\) has arbitrarily small periods or infinitely many fixed points, then there are points \(x_n\) with periods \(p_n\to0\) which, without loss of generality, converge to a fixed point \(x\), which then has a \(\delta\)-neighborhood that contains a point \(y\) with positive period or a fixed point \(y\neq x\). Since \(d(\varphi^t(x),\varphi^t(y)<\delta\) for all \(t\in\R\), this contradicts being separating.

\noindent\eqref{itemCloseTracking}
Otherwise, take $x_n,y_n\in X$ such that $d(y_n,\varphi^t(x_n))>\rho$ for
all\/ $t\in[-\epsilon,\epsilon]$ and $d(\varphi^t(x_n),\varphi^t(y_n))<\delta$
for all\/ $t\in[-n,n]$, and (without loss of generality) $x_n\to x$ and
$y_n\to y$. Then on one hand, $\varphi^t(x)\neq y$ when $|t|\le\epsilon$,
while on the other hand for any $r\in\R$ we have
$d(\varphi^r(x_n),\varphi^r(y_n))<\delta$ for all\/ $n\ge K\dfn|r|$, so
$d(\varphi^r(x),\varphi^r(y))<\delta$, hence, since $r$ was arbitrary,
$y=\varphi^t(x)$ for some $t\in[-\epsilon,\epsilon]$, a contradiction.
\end{proof}
\begin{definition}[Hyperbolic set, Axiom A]\label{defhypsetforflow}
Let $M$ be a smooth manifold and \(\Phi\) a smooth flow on $M$.  A compact \(\Phi\)-invariant set $\Lambda$ is a \emph{hyperbolic set} for \(\Phi\) if there are a finite number of hyperbolic fixed points $\{p_1,..., p_k\}$, a closed set $\Lambda'$ such that $\Lambda=\Lambda'\cup\{p_1,..., p_k\}$, a \(\Phi\)-invariant splitting $T_{\Lambda'} M=E^s\oplus E^c\oplus E^u$, and constants $C\geq 1$, $\lambda\in(0,1)$ such that
\begin{itemize}
\item $E^c(x)\dfn\R X(x)\neq\{0\}$ for all\/ $x\in\Lambda'$, where $X\dfn\frac{d}{dt}\varphi^t(x)|_{t=0}$,
\item $\| D\varphi^t\rest{E^s_x}\|\leq C\lambda^t$ for all\/ $t>0$ and all\/ $x\in\Lambda'$, and
\item $\| D\varphi^{-t}\rest{E^u_x}\|\leq C\lambda^t$ for all\/ $t>0$ and all\/ $x\in\Lambda'$.
\end{itemize}
A smooth flow \(\Phi\) on a connected manifold $M$ is said to be an \emph{Anosov flow}
(or \emph{hyperbolic flow}) if \(M\) is a hyperbolic set for \(\Phi\).

A flow \(\Phi\) satisfies \emph{Axiom A} if\/ $\NW(\Phi)$ is hyperbolic and is the closure of the periodic orbits\rlap.\footnote{Our Axiom A allows for hyperbolic fixed points, whereas Smale's original Axiom A excluded singularities (he used ``{Axiom A'\,}'' for our Axiom A). Our choice follows Bowen's terminology.}

A hyperbolic set $\Lambda$ for $\Phi$ is said to be \emph{locally
maximal} if there is a neighborhood $V$ of\/ $\Lambda$ (an \emph{isolating neighborhood}) such that $\Lambda=\Lambda^V_\Phi\dfn\bigcap_{t\in\R}\varphi^t(V)$. A locally maximal hyperbolic set $\Lambda$ for a flow $\Phi$ is a \emph{basic set} if there is a positive semiorbit that is dense in $\Lambda$, that is, \(\Phi\rest\Lambda\) is topologically transitive.
\end{definition}
If a flow \(\Phi\) satisfies \emph{Axiom A}, then $\NW(\Phi\rest{\NW(\Phi)})=\NW(\Phi)$.
If\/ $\Lambda$ is a basic set, then $\NW(\Phi\rest{\Lambda})=\Lambda$.  The nonwandering set of an Axiom A flow is a finite union of disjoint basic sets by Smale's Spectral Decomposition Theorem \ref{THMSpecDecomp}.

The \emph{local strong stable manifold} and \emph{local
strong unstable manifold} of \(x\) are characterized as follows:
there is a continuous family of neighborhoods $U_x$ of\/ $x\in\Lambda$ such that
\begin{align*}
W^s_{\mathrm{loc}}(x)&=\{y\,\mid\, \varphi^t(y)\in U_{\varphi^t(x)}\textrm{ for }t>0, & d(\varphi^t(x),\varphi^t(y))\lto{t\to+\infty} 0\}, \\
W^u_{\mathrm{loc}}(x)&=\{y\,\mid\, \varphi^{-t}(y)\in U_{\varphi^{-t}(x)}\textrm{ for }t>0, & d(\varphi^{-t}(x),\varphi^{-t}(y))\lto{t\to+\infty} 0\}.
\end{align*}
The global \emph{strong stable} and \emph{strong unstable} manifolds
\begin{equation}\label{eqstrongstunstmfsforflows}
\begin{aligned}
W^s(x)&\dfn\bigcup_{t>0}\varphi^{-t}(W^s_{\mathrm{loc}}(\varphi^t(x)))=\{y\in M\,\mid\, d(\varphi^t(x),\varphi^t(y))\lto{t\to\infty} 0\},
\\
W^u(x)&\dfn \bigcup_{t>0}\varphi^t(W^u_{\mathrm{loc}}(\varphi^{-t}(x)))=\{y\in M\,\mid\, d(\varphi^{-t}(x),\varphi^{-t}(y))\lto{t\to\infty} 0\}
\end{aligned}
\end{equation}
are smoothly injectively immersed manifolds, as are
the manifolds
\begin{equation}
W^{cs}(x)\dfn \bigcup_{t\in\R}\varphi^t(W^s(x))
\text{ and }
W^{cu}(x)\dfn \bigcup_{t\in\R}\varphi^t(W^u(x)),
\label{eqweakstunstmfsforflows}
\end{equation}
called the \emph{weak stable} and \emph{weak unstable}
manifolds (or \emph{center-stable} and
\emph{center-unstable} manifolds) of
\(x\).  Note that $T_x W^{cs}=E_x^s\oplus E_x^c$,
$T_x W^{cu}=E_x^c\oplus E_x^u$.
\begin{theorem}[{In-Phase Theorem \cite[Theorem 5.3.25]{FH}}]\label{THMInPhase}
If\/ $\Lambda$ is a compact locally maximal hyperbolic
set for \(\Phi\) on $M$, then
\(\displaystyle
W^s(\Lambda)\dfn\{x\in M\mid\emptyset\neq\omega(x)\subset\Lambda\}=\bigcup_{x\in\Lambda}W^s(x)\), and
\(\displaystyle W^u(\Lambda)\dfn\{x\in M\mid\emptyset\neq\alpha(x)\subset\Lambda\}=\bigcup_{x\in\Lambda}W^u(x)\),
and for any $\epsilon>0$ there is a neighborhood $U$ of\/ $\Lambda$ with
$\displaystyle\bigcap_{t\ge0}\varphi^{-t}(U)\subset W^s_\epsilon(\Lambda)\dfn\bigcup_{x\in\Lambda}W^s_\epsilon(x)$ (and likewise for $W^u$).
\end{theorem}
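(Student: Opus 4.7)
The plan is to prove the third (local) assertion first and then derive both global equalities from it; the $W^u$ statements follow by applying the $W^s$ arguments to the reversed flow.

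For the containment $\bigcup_{x\in\Lambda}W^s(x)\subset W^s(\Lambda)$, any $y\in W^s(x)$ satisfies $d(\varphi^t(y),\varphi^t(x))\to0$, so every accumulation point of $\{\varphi^t(y)\}_{t\ge0}$ is an accumulation point of the orbit of $x\in\Lambda$. Since $\Lambda$ is closed and $\Phi$-invariant and $M$ is compact, $\omega(y)$ is nonempty and lies in $\Lambda$.

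For the local statement I would invoke the Shadowing Lemma for locally maximal hyperbolic sets of flows: for every $\epsilon>0$ there is $\eta>0$ such that every $\eta$-pseudo-orbit for $\Phi$ with points within $\eta$ of $\Lambda$ is $\epsilon$-shadowed, up to a suitable reparameterization, by a genuine orbit in $\Lambda$. Let $V$ be an isolating neighborhood for $\Lambda$ and pick a smaller $U\subset V$ so that every $y\in U$ is within $\eta/4$ of $\Lambda$, and the sampled sequence $\{\varphi^n(y)\}_{n\in\N}$, together with an $\eta/4$-close projection to $\Lambda$, constitutes an $\eta$-pseudo-orbit. If $\varphi^t(y)\in U$ for all $t\ge0$, shadowing produces $x\in\Lambda$ with $d(\varphi^t(y),\varphi^{\sigma(t)}(x))<\epsilon$ for a suitable $\sigma$; the hyperbolic splitting on $\Lambda$ together with $\omega(y)\subset\Lambda$ then upgrades this uniform tracking to forward convergence to the orbit of $x$, so $y\in W^s_\epsilon(x)\subset W^s_\epsilon(\Lambda)$. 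An alternative is to apply the graph transform to the bundle $E^s$ extended to a neighborhood of $\Lambda$, obtaining a family of local stable disks whose union captures precisely the points with forward orbit staying in $U$.

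The global equality follows: if $\emptyset\neq\omega(y)\subset\Lambda$, compactness of $\omega(y)$ gives $T>0$ with $\varphi^t(y)\in U$ for all $t\ge T$, so the local statement yields $x_0\in\Lambda$ with $\varphi^T(y)\in W^s_\epsilon(x_0)$, hence $y\in W^s(\varphi^{-T}(x_0))\subset\bigcup_{x\in\Lambda}W^s(x)$. The main obstacle is the local statement --- specifically, uniformly controlling the transverse stable structure near $\Lambda$ in the presence of hyperbolic fixed points, which is why local maximality and an isolating neighborhood are essential; once this is in hand, the global conclusions are essentially formal.
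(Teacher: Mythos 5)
The paper does not prove this statement; it cites it from the Fisher--Hasselblatt textbook (\cite[Theorem~5.3.25]{FH}), so there is no in-paper proof to compare against. Judged on its own, your overall architecture is sound and standard: prove the local statement about $\bigcap_{t\ge0}\varphi^{-t}(U)$ first, derive the global equality from it by pushing forward until the orbit enters and stays in $U$ (your compactness argument for this is correct, as is the easy containment $\bigcup_{x\in\Lambda}W^s(x)\subset W^s(\Lambda)$), and obtain the $W^u$ claims by time reversal.

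The genuine gap is precisely where you write that ``the hyperbolic splitting on $\Lambda$ together with $\omega(y)\subset\Lambda$ then upgrades this uniform tracking to forward convergence to the orbit of $x$.'' Flow shadowing gives $d(\varphi^t(y),\varphi^{\sigma(t)}(x))<\epsilon$ for a continuous, monotone reparameterization $\sigma$, whereas $W^s_\epsilon(x)$ (the \emph{strong} stable manifold, as defined in the paper via \eqref{eqstrongstunstmfsforflows}) requires $d(\varphi^t(y),\varphi^t(x'))\to0$ with \emph{matched} time for some $x'$ on the orbit of $x$. Eliminating the reparameterization --- i.e., showing the shadowing can be taken ``in phase'' --- is exactly the content that gives the theorem its name, and it is not a consequence of merely having a hyperbolic splitting plus $\omega(y)\subset\Lambda$. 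It requires either (i) the graph-transform/Hadamard--Perron construction of local strong-stable disks over a neighborhood of $\Lambda$ together with the contraction estimates that show any forward orbit trapped in $U$ is captured by one of these disks, or (ii) a refined shadowing statement that already produces a strong-stable shadow. You gesture at route (i) as an ``alternative,'' but neither route is actually carried out.

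A second issue you flag but do not resolve: the paper's Definition~\ref{defhypsetforflow} allows $\Lambda$ to contain hyperbolic fixed points, at which $E^c=\{0\}$ and the local transverse structure degenerates relative to nearby regular orbits. Near such a fixed point the pseudo-orbit/shadowing setup and the uniform graph-transform estimates both need a separate treatment (this is part of why the paper's own Shadowing Theorem reference is to a careful formulation for flows with singularities). As written, your argument would go through for a hyperbolic set of a fixed-point-free flow, but the fixed-point case --- which the statement explicitly covers --- is left open.
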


If \(\Phi\) is an Axiom A flow, then $M=\bigcup_{i=1}^m W^s(\Lambda_i)=\bigcup_{i=1}^m W^u(\Lambda_i)$ with each union disjoint, where $\{\Lambda_i\}_{i=1}^k$ is the spectral decomposition.  Furthermore, there is an open and dense set of points that are contained in the basin of an attractor and a repeller.
\begin{definition}
If \(\Phi\) is an Axiom A flow, define a partial
ordering $\gg$ on the basic sets $\Lambda_1, \dots, \Lambda_n$ from the
spectral decomposition  by
\[\Lambda_i\gg \Lambda_j\text{ if }
(W^u(\Lambda_i)\smallsetminus\Lambda_i)\cap (W^s(\Lambda_j)\smallsetminus\Lambda_j)\neq \emptyset.\]
A \emph{$k$-cycle} consists of a sequence of basic sets $\Lambda_{i_1}\gg \Lambda_{i_2}\gg \cdots \gg \Lambda_{i_k}\gg \Lambda_{i_1}$.  The flow \(\Phi\)
has the \emph{no cycles property} if there are no cycles among the basic sets.
\end{definition}
The no cycles property for an Axiom A flow is equivalent to hyperbolicity of the chain recurrent set \cite[Theorem 5.3.35]{FH} and implied by the \emph{strong transversality} condition for an Axiom A flow assumed in \cite{PY89}:
$W^s(x)$ and $W^u(x)$ are transverse for all\/ $x\in M$.

\begin{definition}\label{defchains}
An \emph{\(\epsilon\)-chain} for a flow \(\Phi\) on a space \(X\) is a map
\(g\colon I\to X\) on a nontrivial interval\/ \(I\subset\R\) such that
\[
d(g(t+\tau),\varphi^{\tau}(g(t))) <\epsilon,\quad\text{for }t,t+\tau\in I\text{ and }|\tau| < 1.
\]
A point \(x\in X\) is \emph{chain recurrent} if it lies on periodic \(\epsilon\)-chains for every \(\epsilon>0\) (the set \(\mathcal{R}(\Phi)\) of such points is the chain-recurrent set), and chain-recurrent points \(x,y\) are \emph{chain-equivalent} if the pair lies on  periodic \(\epsilon\)-chains for every \(\epsilon>0\). The equivalence classes are called the \emph{chain-components}.
\end{definition}
The Anosov Shadowing Theorem \cite[Theorem 5.4.1]{FH} implies in particular the \emph{shadowing property} (\(\epsilon\)-chains are close to orbits)\footnote{This is also known as the pseudo-orbit tracing property.} as well as
the next result, that hyperbolic dynamics topologically do  not change under perturbation.  We will, however,  see that since derivatives can change under perturbations, so can the centralizer.
\begin{theorem}[Strong structural stability of hyperbolic sets]\label{thmstrucstabilityhypsets}Suppose $\Lambda$ is a compact hyperbolic set for a $C^1$ flow
\(\Phi\) on $M$. Then there are
\begin{itemize}
\item a $C^1$-neighborhood $U$ of \(\Phi\),
\item a $C^0$-neighborhood $V$ of the inclusion $\iota\dfn\mathrm{Id}\rest\Lambda$ of\/ $\Lambda$ in
$M$ and
\item a continuous map $h\colon U\to C(\Lambda,M)$, $\Psi\mapsto h_\Psi$
\end{itemize}
such that $h_\Phi=\iota$ and for each $\Psi\in U$
\begin{itemize}
\item$h_\Psi$ is a (H\"older) continuous embedding,
\item$h_\Psi$ is the transversely unique map in $V$ for which
$\psi^{\tau(t)}\circ h_\Psi=h_\Psi\circ \varphi^t\rest\Lambda$, where $\tau$ is given by the Shadowing Theorem,
\item the \emph{continuation} $\Lambda_\Psi\dfn h_\Psi(\Lambda)$ is a hyperbolic set for $\Psi$.
\end{itemize}
\end{theorem}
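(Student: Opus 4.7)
The plan is to derive this from the Anosov Shadowing Theorem (cited just above as \cite[Theorem 5.4.1]{FH}), exploiting the fact that any $\Phi$-orbit inside $\Lambda$ is an $\epsilon$-chain for every flow $\Psi$ sufficiently $C^1$-close to $\Phi$. For each $x\in\Lambda$ and each such $\Psi$, the curve $t\mapsto\varphi^t(x)$ is a pseudo-orbit for $\Psi$ whose error tends to zero as $\|\Psi-\Phi\|_{C^1}\to0$; shadowing then produces, for sufficiently close $\Psi$, a genuine $\Psi$-orbit $t\mapsto\psi^{\tau(t)}(h_\Psi(x))$ that stays uniformly close to $\varphi^t(x)$, together with a continuous reparameterization $\tau$. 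Setting $h_\Psi(x)$ to be the basepoint of this shadowing orbit defines the candidate map, and the semiconjugacy $\psi^{\tau(t)}\circ h_\Psi=h_\Psi\circ\varphi^t\rest\Lambda$ is built into the construction.

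Next I would verify continuity and injectivity. Continuity of $\Psi\mapsto h_\Psi$ into $C(\Lambda,M)$ follows because the shadowing orbit depends continuously on the pseudo-orbit data, which in turn depends continuously on $\Psi$ in $C^1$; H\"older regularity of $h_\Psi$ comes from the usual hyperbolic estimates applied to the exponential rate at which stable and unstable directions separate pseudo-orbits. Injectivity, and more generally the embedding property, follow from the \emph{transverse} uniqueness clause of shadowing: two nearby shadowing orbits that track the same $\Phi$-orbit must coincide up to a small time shift. This is exactly what prevents uniqueness in the naive sense — one can always compose $h_\Psi$ with a time translation along $\Psi$-orbits — and it is why the statement restricts uniqueness to a transversal, i.e. to maps in the $C^0$-neighborhood $V$ of $\iota$.

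Finally, to see that $\Lambda_\Psi\dfn h_\Psi(\Lambda)$ is hyperbolic for $\Psi$, I would use persistence of hyperbolic splittings under $C^1$-perturbation via cone-field criteria: the $\Phi$-invariant cones on $\Lambda$ extend to cones on a neighborhood of $\Lambda$ in $M$, and for $\Psi$ sufficiently $C^1$-close to $\Phi$ and $h_\Psi$ sufficiently $C^0$-close to $\iota$, these cones are strictly $D\psi^t$-invariant with uniform expansion/contraction on $\Lambda_\Psi$. The center direction is reconstructed from the $\Psi$-vector field, and the rescaling by $\tau$ only distorts the rates by a bounded factor, preserving the exponential dichotomy.

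The main technical obstacle is the interplay between the reparameterization $\tau$ and the transverse uniqueness: one must show that the shadowing map $h_\Psi$ can be chosen so that different choices differ only by an orbit-direction shift, so that pinning down $h_\Psi$ on a local transversal (or, equivalently, inside the $C^0$-neighborhood $V$) yields an honest well-defined map, and that this choice varies continuously with $\Psi$. Once that continuity is established, the embedding and hyperbolicity properties follow from standard hyperbolic-set arguments, and the whole construction reduces to a careful bookkeeping of the shadowing normalization for flows rather than diffeomorphisms.
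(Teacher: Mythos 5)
The paper does not prove this theorem but treats it as background, explicitly stating just before it that the Anosov Shadowing Theorem \cite[Theorem 5.4.1]{FH} implies it, and your outline reconstructs exactly that standard derivation: build $h_\Psi$ by $\Psi$-shadowing the $\Phi$-orbits (which are pseudo-orbits for nearby $\Psi$), get continuity and the embedding property from the continuous dependence and transverse uniqueness in shadowing together with expansivity, and get hyperbolicity of $\Lambda_\Psi$ from cone-field persistence under $C^1$-perturbation. This matches the approach the paper invokes; your identification of the normalization of the reparameterization $\tau$ as the main bookkeeping issue is the right place to focus when filling in details.
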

If \(\Psi=\Phi\) is Anosov, then transverse uniqueness comes close to establishing the  ``well-known and elementary'' triviality of centralizers of Anosov flows.

The shadowing property together with expansivity gives
\begin{theorem}[Spectral Decomposition]\label{THMSpecDecomp}
The chain-recurrent set of an expansive flow with the shadowing property has finitely many chain-components, and each is topologically transitive.
\end{theorem}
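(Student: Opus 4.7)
My plan is to prove the two assertions (transitivity of each chain component and finiteness of their number) separately, along classical lines.

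\emph{Transitivity of each chain component $C$.} The key step is: for any nonempty relatively open $U, V \subseteq C$, there exists $t > 0$ with $\varphi^t(U) \cap V \neq \emptyset$. Given $x \in U$, $y \in V$, and $\epsilon > 0$ with $B(x,\epsilon) \cap C \subseteq U$ and $B(y,\epsilon) \cap C \subseteq V$, let $\eta$ be the shadowing constant for $\epsilon$. Since $x$ and $y$ lie in the same chain component, there is an $\eta$-chain of some length $T$ from $x$ to $y$; the shadowing property provides a genuine orbit beginning within $\epsilon$ of $x$ and (up to a small reparameterization) ending within $\epsilon$ of $y$ at time roughly $T$, so $\varphi^t(U) \cap V \neq \emptyset$ for some $t$ near $T$. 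A Baire category argument in the compact metric space $C$ then upgrades this to the existence of a dense forward orbit: for a countable base $\{V_n\}$ of the relative topology on $C$, each set $G_n \dfn \bigcup_{t \geq 0} \varphi^{-t}(V_n) \cap C$ is open by continuity of $\varphi^t$ and dense by the open-set transitivity, so any point in the residual set $\bigcap_n G_n$ has dense forward orbit in $C$.

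\emph{Finiteness of chain components.} The plan is to show each chain component is open in $\mathcal{R}(\Phi)$; compactness of $\mathcal{R}(\Phi)$ then bounds their number. The technical core is a lemma asserting that there exists $\eta_0 > 0$ such that $x, y \in \mathcal{R}(\Phi)$ with $d(x, y) < \eta_0$ implies $x \sim y$. To establish this, I would fix a reference $\epsilon_0 > 0$ with expansivity constant $\delta_0$ and shadowing constant $\eta_0$, and for each $\epsilon > 0$ produce a periodic $\epsilon$-chain through both $x$ and $y$ by combining (i) chain-recurrence of $x$ and $y$, which supplies periodic $\epsilon'$-chains through each for any $\epsilon' > 0$; (ii) the shadowing property, which converts the spliced pseudo-orbit into a genuine orbit passing close to both $x$ and $y$; and (iii) expansivity, which pins down this shadowing orbit and allows extraction of chains through both points at arbitrarily small scales.

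\emph{Main obstacle.} The delicate point is uniform control: the naive splicing of chains (inserting a small detour through $y$ into a periodic chain through $x$) only yields $\epsilon$-chains for $\epsilon$ bounded below by $d(x, y)$ and the modulus of continuity of $\Phi$, whereas openness of chain components requires a single neighborhood serving all $\epsilon > 0$ simultaneously. Resolving this is where the essential interplay of expansivity and shadowing enters: shadowing promotes chains into genuine orbits nearby, and expansivity forces these orbits to reflect the chain-recurrence structure at arbitrarily small scales, so the two chain-recurrent points are related by $\epsilon$-chains for \emph{every} $\epsilon$. This is the content of the spectral decomposition for expansive systems with shadowing; the flow context adds only minor bookkeeping for the time-reparameterization in the shadowing theorem, handled by uniform continuity of $\Phi$ on bounded time intervals.
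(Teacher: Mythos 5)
The paper does not prove this theorem; it is stated (after citing the Anosov Shadowing Theorem) as a known consequence of expansivity plus shadowing, with the classical sources being Aoki and Aoki--Hiraide in discrete time and Komuro and Thomas for flows. So I am comparing your proposal against that classical argument rather than against anything in the paper.

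Your overall architecture is right: openness of each chain component in $\mathcal{R}(\Phi)$ plus compactness gives finiteness, and a Baire argument from open-set transitivity gives a dense semiorbit. But there are two genuine gaps.

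First, in the transitivity step you obtain a shadowing orbit that passes through $B(x,\epsilon)$ at time $0$ and $B(y,\epsilon)$ at time $\approx T$, and conclude $\varphi^t(U)\cap V\neq\emptyset$ for $U=B(x,\epsilon)\cap C$, $V=B(y,\epsilon)\cap C$. That conclusion needs the shadowing orbit to lie \emph{in} $C$, which an arbitrary shadowing orbit need not do. The standard repair is to close the chain into a \emph{periodic} $\eta$-chain inside $C$ (possible since $C$ is a chain component), and then use expansivity: the shadowing orbit of a periodic pseudo-orbit is unique, hence itself periodic, hence in $\mathcal{R}(\Phi)$; the openness lemma you prove separately then puts it in $C$. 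As written, you omit this and it is not automatic.

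Second, and more seriously, your proof of the key lemma (existence of $\eta_0>0$ with $d(x,y)<\eta_0 \Rightarrow x\sim y$ for $x,y\in\mathcal{R}(\Phi)$) is not an argument. You correctly identify the obstruction---naive splicing only produces $\epsilon$-chains for $\epsilon$ on the order of $d(x,y)$---but the "resolution" you offer (``shadowing promotes chains into genuine orbits nearby, and expansivity forces these orbits to reflect the chain-recurrence structure at arbitrarily small scales'') is a description of the conclusion, not a mechanism for reaching it. Expansivity gives uniqueness of shadowing orbits; it does not directly manufacture fine $\epsilon$-chains. The actual mechanism goes through two further ingredients: (a) the Anosov closing lemma (expansivity $+$ shadowing $\Rightarrow$ nearby chain-recurrent points are approximated by genuine periodic orbits), and (b) a local product structure / heteroclinic-linking step: for two nearby periodic orbits $\gamma_1,\gamma_2$ obtained in (a), one applies shadowing again to spliced orbit segments to produce points of $W^s(\gamma_1)\cap W^u(\gamma_2)$ and $W^u(\gamma_1)\cap W^s(\gamma_2)$. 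These heteroclinic orbits converge exponentially to $\gamma_1$ and $\gamma_2$, so by waiting long enough one can concatenate ``follow $\gamma_1$ for a long time, jump $\epsilon$-small onto the heteroclinic orbit, follow until $\epsilon$-close to $\gamma_2$, jump $\epsilon$-small onto $\gamma_2$, \dots{}'' into $\epsilon$-chains for \emph{every} $\epsilon$. Step (b) is the hard part of the spectral decomposition theorem and your proposal does not supply it; without it, openness of chain components is not established.
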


\section{Centralizers for kinematic-expansive flows}
In this section we prove Theorem \ref{t.expansive} 
about centralizers for kinematic-expansive flows.
Walters observed that expansive homeomorphisms have discrete
centralizers \cite[Theorem 2]{Walters}, and
likewise, kinematic expansivity of a flow (Definition \ref{d.expansive}) ensures that centralizers are discrete---provided we add hypotheses to control ``longitudinal'' phenomena. The Walters argument shows that a commuting homeomorphism preserves orbits, but unlike in discrete time we need to further establish that the shift along them is constant:
\begin{proposition}\label{PROPCentralizerCloseToIdentity}
Consider a continuous flow \(\Phi\) on \(X\) and \(f\in\mathcal{C}^0(\Phi)\).
\begin{enumerate}
\item\label{itemFixOrbits} If \(\Phi\) is kinematic-expansive, \(\epsilon>0\), \(\delta>0\) a separation constant for \(\epsilon\), \(d_{C^0}(f,\mathrm{Id})<\delta\), then \(\forall x\in X\ \exists t\in(-\epsilon,\epsilon)\) such that \(f(x)=\varphi^t(x)\).

\item\label{itemOrbitTranslation}\(f(x)\in\Oc(x)\Rightarrow\exists\tau=\tau(\mathcal{O}(x))\in\R\colon f\rest{\overline{\Oc(x)}}=\varphi^\tau\rest{\overline{\Oc(x)}}\).
\item\label{itemCtsShift}In the context of \eqref{itemFixOrbits}, \(x\mapsto\tau(\Oc(x))\) from \eqref{itemOrbitTranslation} can be chosen continuously on \(X\).

\item\label{itemTimeEpsNearId}If \(\Psi\) is a continuous flow on \(X\), then \(\forall\epsilon>0\ \exists\delta_0>0\colon|\delta|<\delta_0\Rightarrow d_{C^0}(\psi^\delta,\mathrm{Id})<\epsilon\).
\item\label{itemAczel}If 2 flows are the same \emph{set} of maps, then they are the same \emph{group} of maps: If \(\Phi,\Psi\) are continuous flows with \(\big\{\psi^t\mid t\in\R\big\}=\big\{\varphi^s\mid s\in\R\big\}\), and  \(\forall\epsilon>0\ \exists\delta>0\colon d_{C^0}(\varphi^t,\mathrm{id})<\delta\Rightarrow|t|<\epsilon>0\), then \(\exists c\in\R\ \forall t\in\R\quad\psi^t=\varphi^{ct}\).
\end{enumerate}
\end{proposition}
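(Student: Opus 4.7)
The plan is to combine commutation with kinematic expansivity for parts \eqref{itemFixOrbits}--\eqref{itemCtsShift}, and to use compactness together with a small functional-equation argument for parts \eqref{itemTimeEpsNearId}--\eqref{itemAczel}.

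For \eqref{itemFixOrbits}, I would use commutation to observe that for every $x\in X$ and $t\in\R$,
\[
d(\varphi^t(x),\varphi^t(f(x)))=d(\varphi^t(x),f(\varphi^t(x)))\le d_{C^0}(f,\mathrm{Id})<\delta,
\]
so kinematic expansivity (with separation constant $\delta$ for $\epsilon$) delivers the required $t\in(-\epsilon,\epsilon)$. Part \eqref{itemOrbitTranslation} is then immediate: if $f(x)=\varphi^\tau(x)$ and $y=\varphi^s(x)\in\mathcal{O}(x)$, commutation gives $f(y)=\varphi^s(f(x))=\varphi^\tau(y)$, so $f$ and $\varphi^\tau$ agree on the dense subset $\mathcal{O}(x)$ of $\overline{\mathcal{O}(x)}$ and hence on the closure by continuity of both maps.

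Part \eqref{itemCtsShift} is the heart of the argument. By Proposition \ref{PRPSeparationFinitary}\eqref{itemfiniteFP}, a kinematic-expansive flow has only finitely many isolated fixed points and no arbitrarily small positive periods. After shrinking $\epsilon$ below half of the infimal positive period (and replacing $\delta$ by the corresponding separation constant), part \eqref{itemFixOrbits} pins down a \emph{unique} $\tau(\mathcal{O}(x))\in(-\epsilon,\epsilon)$ on each non-fixed orbit. Commutation sends fixed points to fixed points, and taking $\delta$ smaller than the mutual separations among fixed points forces $f$ to fix each one individually; I then set $\tau\dfn 0$ on $\mathrm{Fix}(\Phi)$. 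Continuity on the open set $X\setminus\mathrm{Fix}(\Phi)$ is a routine subsequence argument: if $x_n\to x$ with $\tau(x_n)\to\tau^\ast$, then $\varphi^{\tau^\ast}(x)=\lim f(x_n)=\varphi^{\tau(x)}(x)$ forces $\tau^\ast=\tau(x)$ by the period bound. The main obstacle is continuity at an isolated fixed point $p$; I plan to handle it by a second appeal to kinematic expansivity: given any prescribed $\epsilon'>0$, further shrinking $\delta$ to the separation constant for $\epsilon'$ makes part \eqref{itemFixOrbits} produce the \emph{uniform} bound $|\tau(x)|<\epsilon'$ on all of $X$, which is exactly continuity at $p$ since $\tau(p)=0$.

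Part \eqref{itemTimeEpsNearId} is uniform continuity of $\Psi\colon[-1,1]\times X\to X$ on the compact set $\{0\}\times X$. For \eqref{itemAczel}, the hypothesis that $d_{C^0}(\varphi^t,\mathrm{id})<\delta\Rightarrow|t|<\epsilon$ makes $t\mapsto\varphi^t$ injective, so $\{\psi^t\}=\{\varphi^s\}$ lets us unambiguously define $c\colon\R\to\R$ by $\psi^t=\varphi^{c(t)}$. The flow law combined with injectivity yields the Cauchy equation $c(t+s)=c(t)+c(s)$, and continuity of $c$ follows from part \eqref{itemTimeEpsNearId} applied to $\Psi$ via
\[
d_{C^0}(\psi^{t_n},\psi^t)=d_{C^0}(\psi^{t_n-t},\mathrm{Id})=d_{C^0}(\varphi^{c(t_n)-c(t)},\mathrm{Id})\to 0,
\]
together with the hypothesis on $\Phi$ to conclude $c(t_n)\to c(t)$. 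A continuous additive function on $\R$ is linear, giving $c(t)=ct$ as required.
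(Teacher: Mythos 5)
Parts \eqref{itemFixOrbits}, \eqref{itemOrbitTranslation}, \eqref{itemTimeEpsNearId}, and \eqref{itemAczel} of your proposal are correct and essentially match the paper's argument. Your subsequence argument for continuity of $\tau$ on $X\setminus\mathrm{Fix}(\Phi)$ is also fine and coincides with the alternative argument given in a footnote of the paper (which applies only in the fixed-point-free case).

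The gap is in your treatment of continuity at a fixed point $p$ in part \eqref{itemCtsShift}. You propose: ``given any prescribed $\epsilon'>0$, further shrinking $\delta$ to the separation constant for $\epsilon'$ makes part \eqref{itemFixOrbits} produce the uniform bound $|\tau(x)|<\epsilon'$.'' But $\epsilon$, $\delta$, and $f$ are all fixed by the hypothesis ``in the context of \eqref{itemFixOrbits}.'' Shrinking $\delta$ to a separation constant for a smaller $\epsilon'$ \emph{strengthens the hypothesis} $d_{C^0}(f,\mathrm{Id})<\delta$ to a statement that the given $f$ need not satisfy; you cannot reapply part \eqref{itemFixOrbits} to conclude anything about $\tau$ for this $f$. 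Indeed, if your argument were valid, letting $\epsilon'\to 0$ would give $\tau\equiv 0$, which is false already for $f=\varphi^c$ with any small $c\neq 0$ (there $\tau\equiv c$). The subsequence argument that works away from fixed points also does not help at $p$, exactly because $\varphi^{\tau^\ast}(p)=p$ holds for \emph{every} $\tau^\ast$, so the limit identity $\varphi^{\tau^\ast}(p)=f(p)=p$ gives no information about $\tau^\ast$.

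The paper closes this gap by a different mechanism: it establishes \emph{uniform} continuity of $\tau$ on $X\setminus\mathrm{Fix}(\Phi)$ using the finitary consequence of kinematic expansivity in Proposition \ref{PRPSeparationFinitary}\eqref{itemCloseTracking} (finite-time $\delta$-closeness of orbits already forces $\rho$-closeness up to a time shift $\le\epsilon$), and a uniformly continuous function extends continuously to the closure. You should replace your ``shrink $\delta$'' step with an argument of that kind; your proof as written does not establish continuity of $\tau$ at fixed points.
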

\begin{proof}
\eqref{itemFixOrbits}: Otherwise,  $d_{C^0}(f, \mathrm{Id})\ge\delta$ because there are $x\in X$ and $t\in\R$ with $\delta\le d(\varphi^t(f(x)), \varphi^t(x))=d(f(\varphi^t(x)), \varphi^t(x))$\rlap.\footnote{If \(\Phi\) is separating, then this argument shows that \(f(x)\in\varphi^\R(x)\) for all \(x\); absent fixed points, \cite[Proposition 1.1.12]{FH} likely gives \(|t|<\epsilon\).}

\eqref{itemOrbitTranslation}: Writing \(f(x)=\varphi^\tau(x)\) gives \(f(\varphi^t(x))=\varphi^t(f(x))=\varphi^{t+\tau}(x)=\varphi^\tau(\varphi^t(x))\), so \(f\rest{{\Oc(x)}}=\varphi^\tau\rest{{\Oc(x)}}\), and the claim follows by continuity of \(f\) and \(\varphi^\tau\).

\eqref{itemCtsShift}: For \(\epsilon>0\) less than the least positive period of \(\Phi\), \(\tau\in(-\epsilon,\epsilon)\) is uniquely determined for each \(x\in X\) that is not a fixed point. 
It extends continuously to \(X\) because it is uniformly continuous as follows. For \(\epsilon>0\) and \(\delta\) a separation constant for \(\epsilon\) take a suitable  \(\rho>0\) and \(T\) as in Proposition \ref{PRPSeparationFinitary}. If \(\eta>0\) is such that \(d(x,y)<\eta\Rightarrow d(\varphi^t(x),\varphi^t(y))<\delta\) when \(|t|\le T\), then \(d(y,\varphi^t(x))<\rho\) for some \(t\in[-\epsilon,\epsilon]\) and hence \(|\tau(x)-\tau(y)|\le\epsilon\)\rlap.\footnote{If \(\Phi\) has no fixed points, one does not need uniform continuity and can instead use separation to argue as follows: For \(\epsilon>0\) sufficiently small, \(\tau\in(-\epsilon,\epsilon)\) is uniquely determined for each \(x\in X\). If \(x_n\to x\), then \(n\mapsto\tau(x_n)\) has an accumulation point \(\tau_0=\lim_{k\to\infty}\tau(x_{n_k})\), and continuity of \(\Phi\) and \(f\) gives \(\varphi^{\tau_0}(x)=\lim_{k\to\infty}\varphi^{\tau(x_{n_k})}(x_{n_k})=\lim_{k\to\infty}f(x_{n_k})=f(x)\), so  \(\tau(x)=\tau_0=\lim_{n\to\infty}\tau(x_n)\).}

\eqref{itemTimeEpsNearId} is uniform continuity (in \(t\))  of \(\Psi\rest{[0,1]\times X}\).

\eqref{itemAczel}: The assumptions on \(\Phi\) imply that \(t\mapsto\tau(t)\) is well-defined on \(\R\) by \(\psi^t=\varphi^{\tau(t)}\) (since \(t\mapsto\varphi^t\) is injective) and continuous at 0. Then \(\varphi^{\tau(t+s)}=\psi^{t+s}=\psi^t\circ\psi^s=\varphi^{\tau(t)}\circ\varphi^{\tau(s)}=\varphi^{\tau(t)+\tau(s)}\) gives \(\tau(t+s)=\tau(t)+\tau(s)\) for all \(s,t\in\R\). This implies that \(\tau\) is continuous and then that \(\tau(t)\equiv ct\) for some \(c\in\R\) by \cite[\S 2.1: a measurable \(f\colon\R\to\R\) with \(f(t+s)=f(t)+f(s)\) is linear]{Aczel}.
\end{proof}

This gives Theorem \ref{t.expansive} by taking \(\Oc(x)\) dense in Proposition \ref{PROPCentralizerCloseToIdentity}\eqref{itemOrbitTranslation}. In fact, we proved:

\begin{proposition}[Discrete centralizer]
If \(\Phi\) is a topologically transitive kinematic-expansive continuous flow on \(X\), \(\epsilon\) a separation constant, \(f\in\mathcal{C}^0(\Phi)\), and \(d_{C^0}(f,\mathrm{Id})<\epsilon\), then \(f=\varphi^\tau\) for some \(\tau\).
\end{proposition}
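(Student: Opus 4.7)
The plan is to read off this proposition as a direct consequence of parts \eqref{itemFixOrbits} and \eqref{itemOrbitTranslation} of Proposition \ref{PROPCentralizerCloseToIdentity}, combined with topological transitivity. There is essentially no new work; what the statement captures is that once \(f\) is close enough to the identity to force orbits to be preserved, a single dense orbit promotes the orbitwise shifts to a single global time-\(\tau\) map.

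First, I would invoke Proposition \ref{PROPCentralizerCloseToIdentity}\eqref{itemFixOrbits}. Since \(\Phi\) is kinematic-expansive, \(\epsilon\) is a separation constant (say, for some \(\epsilon'>0\)), and \(d_{C^0}(f,\mathrm{Id})<\epsilon\), that part produces for each \(x\in X\) some \(t(x)\in(-\epsilon',\epsilon')\) with \(f(x)=\varphi^{t(x)}(x)\). In particular, \(f(x)\in\Oc(x)\) for every \(x\in X\), which is the hypothesis needed to feed into the next step.

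Second, by topological transitivity (Definition \ref{DEFtransitive}) there exists a point \(x_0\in X\) whose forward semiorbit is dense, so \(\overline{\Oc(x_0)}=X\). Applying Proposition \ref{PROPCentralizerCloseToIdentity}\eqref{itemOrbitTranslation} to \(x_0\), we obtain a single \(\tau=\tau(\Oc(x_0))\in\R\) such that \(f\rest{\overline{\Oc(x_0)}}=\varphi^\tau\rest{\overline{\Oc(x_0)}}\). Since \(\overline{\Oc(x_0)}=X\), this means \(f=\varphi^\tau\) on all of \(X\), as desired.

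The only point that might be called an obstacle is verifying that the \(\tau\) produced by \eqref{itemOrbitTranslation} is well-defined (i.e., that \(f\) is not a fixed point map on the orbit), but this follows directly from the construction in the proof of \eqref{itemOrbitTranslation}: writing \(f(x_0)=\varphi^\tau(x_0)\) and using that \(f\) and \(\varphi^\tau\) agree on \(\Oc(x_0)\), continuity extends the equality to the closure. Thus the proposition follows with essentially the same ingredients already assembled, the only additional observation being that transitivity collapses the \emph{a priori} orbit-dependent shift \(\tau(\Oc(x))\) to a single constant.
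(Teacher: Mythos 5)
Your proof is correct and follows exactly the route the paper takes: Proposition \ref{PROPCentralizerCloseToIdentity}\eqref{itemFixOrbits} to get \(f(x)\in\Oc(x)\) for all \(x\), then \eqref{itemOrbitTranslation} applied to a dense orbit supplied by transitivity, which is precisely what the paper means by ``taking \(\Oc(x)\) dense in Proposition \ref{PROPCentralizerCloseToIdentity}\eqref{itemOrbitTranslation}.'' The closing remark about whether \(\tau\) is well-defined is unnecessary (the proposition only requires existence of some \(\tau\)), but it does not affect the argument.
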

Theorem \ref{t.expansive} does not properly generalize the corresponding
statement for Anosov flows because Anosov flows need not be transitive. The
following sidesteps that hypothesis:

\begin{proposition}\label{PRPTrChainCompNoConstOfMotion}
A continuous function invariant under a flow on a connected space \(X\)
with countably many chain-components, all of which are topologically
transitive, is constant.
\end{proposition}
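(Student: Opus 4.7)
The plan is to show that a continuous $\Phi$-invariant $h\colon X\to\R$ takes only countably many values, and then to invoke connectedness of $X$. First I would fix a chain-component $C$ and, using the hypothesis that $\Phi\rest{C}$ is topologically transitive, pick $y_C\in C$ whose forward semiorbit is dense in $C$. Flow-invariance gives $h\equiv h(y_C)$ on $\varphi^{[0,\infty)}(y_C)$, and continuity then forces $h\equiv c_C$ on all of $C=\overline{\varphi^{[0,\infty)}(y_C)}$.

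Next, for an arbitrary $x\in X$ I would argue that $h(x)=c_{C_x}$ for some chain-component $C_x$. Under the paper's standing compactness hypothesis, $\omega(x)$ is nonempty and sits in the chain-recurrent set. It is moreover internally chain-transitive: given any $y_1,y_2\in\omega(x)$ and any $\epsilon>0$, sufficiently long arcs of the forward orbit of $x$ pass arbitrarily close to $y_1$, then close to $y_2$, then back close to $y_1$, yielding a periodic $\epsilon$-chain through both points. Hence $\omega(x)$ is contained in a single chain-component $C_x$. Choosing $t_n\to\infty$ with $\varphi^{t_n}(x)\to y\in\omega(x)\subset C_x$, invariance and continuity of $h$ give
\[
h(x)=\lim_{n\to\infty} h(\varphi^{t_n}(x))=h(y)=c_{C_x}.
\]

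Consequently $h(X)\subseteq\{c_C\mid C \text{ a chain-component}\}$, a countable subset of $\R$. Since $X$ is connected, $h(X)$ is a connected subset of $\R$, hence an interval, and the only countable intervals are singletons; so $h$ is constant.

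The step I expect to require the most care is verifying that $\omega(x)$ lies in a single chain-component; this is the standard internal chain-transitivity of $\omega$-limit sets for flows on compact metric spaces, and everything else is a formal consequence of continuity, flow-invariance, and the countability-plus-connectedness endgame.
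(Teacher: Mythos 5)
Your proof is correct and follows essentially the same route as the paper: show $h$ is constant on each chain-component using transitivity, show $h(x)$ equals the value on the chain-component containing $\omega(x)$, and conclude by connectedness plus countability of the image. The only difference is that you spell out the internal chain-transitivity of $\omega$-limit sets, whereas the paper cites it from \cite{FH}.
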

We say that a flow \(\Phi\) \emph{has no constant of motion} if every continuous \(\Phi\)-invariant function is constant. Apparently, Thom conjectured that this is \(C^1\)-generic \cite[\S1]{Obata}.
\begin{proof}
Such a function \(h\) is constant on orbit closures of the flow \(\Phi\), hence\begin{itemize}
\item\(h\) is constant on each chain-component, and
\item if \(x\in X\), then \(h(\{x\})=h\big(\,\overline{\varphi^{\R}(x)}\,\big)=h(\omega(x))\) (so \(h(X)=h(L(\Phi))\)), and \(\omega(x)\) is contained in a chain-component \cite[Propositions 1.5.7(4), 1.5.34]{FH}.
\end{itemize}
Thus, \(h(X)\subset\R\) is countable and connected, hence a point.
\end{proof}
\begin{remark}
The transitivity assumption on chain-components is \(C^1\)-generically not needed, i.e., for a \(C^1\)-residual set of flows, every continuous invariant function is constant on each chain-component \cite[Lemma 6.17]{LOS}.
\end{remark}

Theorem \ref{THMExpansiveChainComp} follows from Proposition \ref{PRPTrChainCompNoConstOfMotion} and Proposition \ref{PROPCentralizerCloseToIdentity}\eqref{itemCtsShift}---as does the next result.

\begin{proposition}[Discrete centralizer]\label{PRPLocTrivCent}
A kinematic-expansive flow on a connected space with countably many
chain-components, each topologically transitive, has discrete \(C^0\)-centralizer (\(\exists\delta>0\colon f\in\mathcal{C}^0(\Phi)\), \(d_{C^0}(f,\mathrm{Id})<\delta\Rightarrow f=\varphi^\tau\) for some \(\tau\)).
\end{proposition}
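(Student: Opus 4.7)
The plan is to combine the two preceding results in a straightforward way: Proposition \ref{PROPCentralizerCloseToIdentity}\eqref{itemCtsShift} produces, from any $f\in\mathcal{C}^0(\Phi)$ sufficiently close to the identity, a continuous $\Phi$-invariant shift function $\tau\colon X\to\R$ satisfying $f(x)=\varphi^{\tau(x)}(x)$; and Proposition \ref{PRPTrChainCompNoConstOfMotion} then forces $\tau$ to be constant under the given hypotheses on chain-components.

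Concretely, I would first fix $\epsilon>0$ smaller than the least positive period of $\Phi$ (which is positive by Proposition \ref{PRPSeparationFinitary}\eqref{itemNoSmallPeriods}, applicable since kinematic expansivity implies separating), and let $\delta>0$ be a separation constant for $\epsilon$. Suppose now that $f\in\mathcal{C}^0(\Phi)$ satisfies $d_{C^0}(f,\mathrm{Id})<\delta$. By Proposition \ref{PROPCentralizerCloseToIdentity}\eqref{itemFixOrbits}, for each $x\in X$ there is some $t\in(-\epsilon,\epsilon)$ with $f(x)=\varphi^{t}(x)$; by part \eqref{itemOrbitTranslation} this $t$ depends only on the orbit $\mathcal{O}(x)$; and by part \eqref{itemCtsShift} the resulting assignment $\tau\colon X\to\R$ may be chosen continuous on all of $X$. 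By construction $\tau$ is $\Phi$-invariant, so $f(x)=\varphi^{\tau(x)}(x)$ with $\tau$ continuous and constant on orbits.

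Now I apply Proposition \ref{PRPTrChainCompNoConstOfMotion} to $\tau$: since $X$ is connected and has countably many chain-components each of which is topologically transitive, and since $\tau$ is a continuous $\Phi$-invariant function, $\tau$ must be constant, say $\tau\equiv c\in\R$. Therefore $f=\varphi^c$, which is the required conclusion.

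The main thing to verify carefully is not any single step but the compatibility of the hypotheses across the two cited results — in particular, that the $\tau$ produced in Proposition \ref{PROPCentralizerCloseToIdentity}\eqref{itemCtsShift} really is $\Phi$-invariant (it is, because it depends only on the orbit by \eqref{itemOrbitTranslation}) and that the connectedness plus countable-transitive-chain-component hypothesis of Proposition \ref{PRPTrChainCompNoConstOfMotion} is the correct one for this situation. No new geometric or dynamical input is needed beyond what has already been established; the discrete-centralizer conclusion is essentially a packaging of the earlier propositions, exactly as the excerpt announces.
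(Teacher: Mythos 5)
Your proposal is correct and follows exactly the paper's intended argument: the text immediately before the proposition says it ``follows from Proposition \ref{PRPTrChainCompNoConstOfMotion} and Proposition \ref{PROPCentralizerCloseToIdentity}\eqref{itemCtsShift},'' which is precisely the combination you assemble. Your added observation about choosing $\epsilon$ below the least positive period (justified by Proposition \ref{PRPSeparationFinitary}\eqref{itemNoSmallPeriods}) is a sound and welcome check of the hypotheses needed for the continuity clause in Proposition \ref{PROPCentralizerCloseToIdentity}\eqref{itemCtsShift}.
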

By the Spectral Decomposition (Theorem \ref{THMSpecDecomp}) this gives in particular
\begin{theorem}
Expansive flows with the shadowing property have discrete centralizer.
\end{theorem}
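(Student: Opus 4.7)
The plan is to reduce this to Proposition \ref{PRPLocTrivCent} via the Spectral Decomposition. The first observation is that Bowen--Walters expansivity is strictly stronger than kinematic expansivity: the expansivity hypothesis quantifies over all continuous reparameterizations $s$ with $s(0)=0$, whereas kinematic expansivity is the same statement restricted to $s(t)=t$. Hence an expansive flow is automatically kinematic-expansive.

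Next, the Spectral Decomposition Theorem \ref{THMSpecDecomp} (which hypothesizes precisely expansivity plus shadowing) asserts that the chain-recurrent set $\mathcal{R}(\Phi)$ has finitely many chain-components, each of which is topologically transitive. In particular there are only countably many chain-components. Thus the hypotheses of Proposition \ref{PRPLocTrivCent} are in place, modulo the requirement that the ambient space be connected.

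If $X$ is connected, Proposition \ref{PRPLocTrivCent} directly yields $\delta>0$ such that any $f\in\mathcal{C}^0(\Phi)$ with $d_{C^0}(f,\mathrm{Id})<\delta$ is a time-$\tau$ map $\varphi^\tau$; this is precisely discreteness of the centralizer. If $X$ is not connected, then since $X$ is compact it has only finitely many connected components $X_1,\dots,X_n$, each clopen and $\Phi$-invariant. Choose $\delta_0>0$ smaller than the minimum pairwise distance between components; then any $f$ with $d_{C^0}(f,\mathrm{Id})<\delta_0$ preserves each $X_i$, and the restriction of $\Phi$ to each $X_i$ is still expansive with shadowing, so Proposition \ref{PRPLocTrivCent} applies componentwise to produce $\tau_i$ with $f\rest{X_i}=\varphi^{\tau_i}\rest{X_i}$. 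The centralizer modulo the flow is then a subgroup of $\R^n/\R$, which is discrete in the quotient sense of the footnote on centralizers.

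No step is genuinely difficult; the only point that requires a moment of care is the possibility that $X$ is disconnected, which would preclude the identification of all the $\tau_i$ into a single time. This is handled by either interpreting ``discrete'' in the quotient sense already adopted in the paper, or, if a single $\tau$ is desired, by imposing connectedness of $X$ as an implicit hypothesis—consistent with the setting of Proposition \ref{PRPLocTrivCent}.
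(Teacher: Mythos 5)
Your main argument is exactly the paper's: expansivity $\Rightarrow$ kinematic expansivity (specializing the reparameterization $s$ to the identity), the Spectral Decomposition Theorem \ref{THMSpecDecomp} gives finitely many transitive chain-components, and Proposition \ref{PRPLocTrivCent} then delivers discreteness of the centralizer. This is the intended reduction, and it is correct under the connectedness hypothesis which Proposition \ref{PRPLocTrivCent} carries.

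However, the claim you make about the disconnected case is false: a subgroup of $\R^n/\R\cong\R^{n-1}$ need not be discrete, and in fact the theorem genuinely fails without connectedness. Take $X=X_1\sqcup X_2$ to be two disjoint copies of a connected expansive flow with shadowing; the maps that act as $\varphi^{\tau_1}$ on $X_1$ and $\varphi^{\tau_2}$ on $X_2$ lie in the centralizer for every pair $(\tau_1,\tau_2)$ and are $C^0$-close to the identity when $\tau_1,\tau_2$ are small, yet they are not time-$t$ maps of $\Phi$ unless $\tau_1=\tau_2$. Modulo the diagonal $\R$-action this is a full 1-parameter family accumulating at the identity, so the quotient is not discrete. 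Your second fallback—reading connectedness as an implicit hypothesis, inherited from Proposition \ref{PRPLocTrivCent}—is the correct resolution; the first fallback should be dropped.
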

\begin{remark}[Hayashi]
Variants of Theorem \ref{THMExpansiveChainComp} arise by showing that \(h\) in Proposition \ref{PRPTrChainCompNoConstOfMotion} is constant on chain-components \(C\) under hypotheses other than transitivity:
\begin{itemize}
\item The closing property---\(\forall\epsilon>0\exists\delta>0\colon\delta\)-chains are \(\epsilon\)-shadowed by a closed orbit; if \(x\sim y\) take \(x_i\to x\), \(y_i=\varphi^{t_i}(x_i)\to y\), hence \(h(x_i)=h(y_i)\)---but this argument also establishes transitivity.
\item The shadowing property---either by an analogous argument or because together with expansivity it implies the closing property---and hence transitivity in this context.
\item \(\exists x\in C\ \forall y\in C\ \exists x_i\to x,\ t_i\in\R\) with \(\varphi^{t_i}(x_i)\to y\).
\end{itemize}
\end{remark}
We expand on the preceding results and on the last of these suggestions by spelling out more carefully what these basic arguments establish. If \(\Phi\) has no constant of motion, then  \(T(\cdot)\) from Theorem \ref{THMWaltersCtsTime} is constant, so taking the conclusion of Theorem \ref{THMWaltersCtsTime} as the definition of having quasidiscrete centralizer implies
\begin{proposition}
A flow has discrete centralizer if it has quasidiscrete centralizer and no constant of motion.
\end{proposition}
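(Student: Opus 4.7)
The plan is essentially to chain together the two hypotheses directly. Suppose \(\Phi\) has quasidiscrete centralizer in the sense of Theorem \ref{THMWaltersCtsTime}, so there exist \(\epsilon>0\) and a separation constant \(\delta>0\) such that any \(f\in\mathcal{C}^0(\Phi)\) with \(d_{C^0}(f,\mathrm{Id})<\delta\) can be written as \(f(\cdot)=\varphi^{T(\cdot)}(\cdot)\) for some continuous \(\Phi\)-invariant function \(T\colon X\to\R\). Take this same \(\delta\) as the discreteness radius.

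Given any such \(f\), the hypothesis that \(\Phi\) has no constant of motion applies to \(T\): every continuous \(\Phi\)-invariant function \(X\to\R\) is constant, so \(T\equiv\tau\) for some \(\tau\in\R\). Consequently \(f(\cdot)=\varphi^\tau(\cdot)\), which is exactly the discreteness conclusion in Proposition \ref{PRPLocTrivCent}.

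There is no real obstacle here beyond unpacking the two definitions; the work was done in setting up Theorem \ref{THMWaltersCtsTime} (which supplies the continuous invariant shift function \(T\)) and in formulating the notion of no constant of motion (which collapses \(T\) to a constant). The statement is essentially a tautological combination of these, and in fact it is the abstract skeleton underlying the deduction of Proposition \ref{PRPLocTrivCent} from Theorem \ref{THMExpansiveChainComp}, with Proposition \ref{PRPTrChainCompNoConstOfMotion} playing the role of the ``no constant of motion'' input in the transitive chain-component setting.
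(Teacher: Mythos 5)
Your proof is correct and matches the paper's own reasoning, which the authors give in the sentence immediately preceding the proposition: quasidiscreteness supplies the continuous invariant shift \(T(\cdot)\), and the no-constant-of-motion hypothesis collapses it to a constant, giving \(f=\varphi^\tau\). Nothing further is needed.
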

Combined with Theorem \ref{THMWaltersCtsTime} itself, this gives
\begin{proposition}\label{PRPWaltersCtsTime}
If \(\Phi\) is a kinematic-expansive continuous flow on \(X\) that has no constant of motion, \(\epsilon>0\), \(\delta>0\) a separation constant for \(\epsilon\), \(f\circ\varphi^t=\varphi^t\circ f\), \(d_{C^0}(f,\mathrm{Id})<\delta\), then there is a \(T\) (near 0) such that $f=\varphi^T$.
\end{proposition}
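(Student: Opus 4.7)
The plan is to simply combine the two results that immediately precede this proposition, i.e., Theorem \ref{THMWaltersCtsTime} together with the observation that quasidiscrete plus ``no constant of motion'' yields discrete. Since all the hypotheses of Theorem \ref{THMWaltersCtsTime} are assumed verbatim, I would first invoke it to produce a continuous \(\Phi\)-invariant function \(T \colon X \to \R\) with \(f(x) = \varphi^{T(x)}(x)\) for all \(x \in X\).

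Next, I would use the standing assumption that \(\Phi\) has no constant of motion: by definition every continuous \(\Phi\)-invariant real-valued function on \(X\) is constant, so \(T \equiv T_0\) for some \(T_0 \in \R\), and therefore \(f = \varphi^{T_0}\). This is precisely the step formalized as the preceding unnumbered proposition (discrete \(=\) quasidiscrete \(+\) no constant of motion), applied in the current setting.

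Finally, to see that \(T_0\) is ``near 0'', I would appeal to Proposition \ref{PROPCentralizerCloseToIdentity}\eqref{itemFixOrbits}: since \(\delta\) is a separation constant for \(\epsilon\) and \(d_{C^0}(f,\mathrm{Id})<\delta\), for each \(x\in X\) there exists \(t(x)\in(-\epsilon,\epsilon)\) with \(f(x)=\varphi^{t(x)}(x)\). At any point \(x\) that is not a fixed point and is such that \(\epsilon\) is smaller than the least positive period through \(x\) (recall Proposition \ref{PRPSeparationFinitary}\eqref{itemNoSmallPeriods}, which rules out arbitrarily small positive periods), the value \(t(x)\) coincides with \(T_0\), so \(T_0 \in (-\epsilon, \epsilon)\). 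If \(X\) consists entirely of fixed points the flow is trivial and there is nothing to prove.

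There is essentially no obstacle here: the proposition is a formal corollary of material already established, and the only mild subtlety is making sure the value \(T_0\) falls in \((-\epsilon,\epsilon)\) rather than in some other branch of the parameter modulo a period, which is handled by the small-period control in Proposition \ref{PRPSeparationFinitary}\eqref{itemNoSmallPeriods}.
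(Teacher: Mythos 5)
Your proposal is correct and follows essentially the same route as the paper: invoke Theorem \ref{THMWaltersCtsTime} to get a continuous \(\Phi\)-invariant \(T\) with \(f(\cdot)=\varphi^{T(\cdot)}(\cdot)\), then use the no-constant-of-motion hypothesis to conclude \(T\) is constant. The only small remark is that the ``near 0'' statement falls out automatically, since \(T\) as produced by Theorem \ref{THMWaltersCtsTime} (via Proposition \ref{PROPCentralizerCloseToIdentity}\eqref{itemFixOrbits}) already takes values in \((-\epsilon,\epsilon)\) pointwise, so your extra discussion of branches and small periods is harmless but not needed.
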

\begin{remark}\label{REMConstOfMotion}
With the notations from Proposition \ref{PRPWaltersCtsTime} here are a few sufficient conditions for having no constant of motion; keep in mind that they are of interest here in the presence of kinematic expansivity (or of having quasidiscrete centralizer) because it is in that context that these then imply discrete centralizer.
\begin{itemize}
\item\(\Phi\) does not have the identity as a (nontrivial) topological factor. (A nonconstant invariant continuous function defines such a factor map.)
\item The limit set \(L(\Phi)\) (Definition \ref{DEFtransitive}) is contained in an at most countable union of \emph{elongational limit sets} \(\overline{\mathcal L}(x)\dfn\overline{\bigcup_{n\in\N}\mathcal{L}^n(x)}\), where  \(\mathcal L^n(x)\dfn\mathcal L(\mathcal L^{n-1}(x))\) and
\[
\mathcal{L}(A)\dfn\big\{\lim_{i\to\infty}\varphi^{t_i}(x_i)\mid\lim_{i\to\infty}x_i\in A,\ t_i\in\R\big\}=\bigcap\big\{\overline{\varphi^\R(O)}\mid A\subset O\text{ open}\big\},
\]
the \emph{elongation} of \(A\). (A constant of motion is constant on  \(\overline{\mathcal L}(x)\) and takes all its values on \(L(\Phi)\).)
\item More strongly, one can replace the elongational limit sets in the previous item by the \emph{elongational hulls} of points x, the closure of the smallest set containing \(x\) that is closed under application of \(\mathcal L\).
\end{itemize}
\end{remark}
One can contemplate what the nature of a kinematic-expansive flow with a constant of motion might be. The restriction \(\Phi_s\) of such a flow to a level set is itself kinematic-expansive. (We note that if the restriction to any level set is expansive \emph{and has the shadowing property}, then the flow is not expansive. Thus, any expansive such examples decompose into expansive flows none of which have the shadowing property. This illustrates how kinematic expansivity is a substantial generalization.) 

We previously remarked on uniqueness of conjugacies, and this is an interesting issue in this topological context because conjugacies are not often smooth. Thus (since \(k^{-1}h\) is in the centralizer of \(\Phi\) below), we note:
\begin{theorem}[Local uniqueness of conjugacies]\label{THMConjugacyUnique}
Suppose \(\Phi\) is continuous flow with discrete \(C^0\)-centralizer. If \(\Psi\) is topologically conjugate to \(\Phi\) via a homeomorphism \(h\), then \(h\) is locally unique, i.e.,  there is a \(\delta>0\) such that if \(k\) is a conjugacy between \(\Phi\) and \(\Psi\) with \(d_{C^0}(h,k)<\delta\), then \(h=k\circ\varphi^t\) for some small \(t\).
\end{theorem}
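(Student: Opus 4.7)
The plan is to exploit the hint already given in the statement: both $h$ and $k$ are conjugacies from $\Phi$ to $\Psi$, so $f \dfn k^{-1}\circ h$ commutes with $\Phi$, and $C^0$-closeness of $h$ to $k$ forces $f$ close to the identity, at which point the discreteness hypothesis on $\mathcal{C}^0(\Phi)$ does all the work.

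First, I would verify the centralizer membership. The conjugacy relations $h\circ\varphi^t=\psi^t\circ h$ and $k\circ\varphi^t=\psi^t\circ k$ give
\[
(k^{-1}\circ h)\circ\varphi^t=k^{-1}\circ\psi^t\circ h=\varphi^t\circ(k^{-1}\circ h),
\]
so $f=k^{-1}\circ h\in\mathcal{C}^0(\Phi)$. By the assumption of discrete $C^0$-centralizer (as in Proposition \ref{PRPLocTrivCent}), there exists $\epsilon_0>0$ such that any $g\in\mathcal{C}^0(\Phi)$ with $d_{C^0}(g,\mathrm{Id})<\epsilon_0$ equals $\varphi^\tau$ for some $\tau\in\R$.

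Next, I would push $C^0$-closeness of $h,k$ through $k^{-1}$. Since $X$ is compact, $k^{-1}$ is uniformly continuous, so there is $\delta>0$ such that $d(y_1,y_2)<\delta$ implies $d(k^{-1}(y_1),k^{-1}(y_2))<\epsilon_0$. If $d_{C^0}(h,k)<\delta$, then for every $x\in X$,
\[
d\big(k^{-1}(h(x)),\,x\big)=d\big(k^{-1}(h(x)),\,k^{-1}(k(x))\big)<\epsilon_0,
\]
i.e., $d_{C^0}(f,\mathrm{Id})<\epsilon_0$. Discreteness then yields $f=\varphi^t$ for some $t$, and composing with $k$ gives $h=k\circ\varphi^t$.

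Finally, for the ``small $t$'' claim, note that $\varphi^t$ itself is within $\epsilon_0$ of the identity in $C^0$. Under the mild additional hypothesis that $\Phi$ has no arbitrarily small periods (automatic when the discreteness comes, as in our applications, from kinematic expansivity, by Proposition \ref{PRPSeparationFinitary}\eqref{itemNoSmallPeriods}), one can shrink $\epsilon_0$ in advance so that $\varphi^t$ being $\epsilon_0$-close to $\mathrm{Id}$ forces $|t|$ to be as small as desired; otherwise $t$ is at worst defined modulo the group of global periods and can be replaced by a small representative. I do not anticipate a serious obstacle here: the entire argument is a short composition of the discreteness hypothesis with uniform continuity of $k^{-1}$, and the only delicate point is the ``small $t$'' clause, which is a cosmetic refinement rather than a substantive step.
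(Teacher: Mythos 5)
Your proof is correct and carries out exactly the argument the paper intends: the paper supplies no formal proof of this theorem, only the parenthetical hint that \(k^{-1}h\) lies in \(\mathcal{C}^0(\Phi)\), and your write-up fills in precisely the needed details (verifying the centralizer membership, passing \(C^0\)-closeness of \(h\) and \(k\) through the uniformly continuous \(k^{-1}\) on the compact space, and invoking discreteness). Your remarks on extracting a small representative for \(t\) via the no-small-periods observation (Proposition \ref{PRPSeparationFinitary}\eqref{itemNoSmallPeriods}) or by reducing modulo the group of global periods are the right way to close the cosmetic ``small \(t\)'' clause.
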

%
\begin{remark}
Our explorations of when a flow has no constant of motion are also pertinent to quasitriviality of the centralizer: the (diffeomorphism-) centralizer of a flow \(\Phi\) is said to be quasitrivial if it consists of maps of the form \(\varphi^{T(\cdot)}(\cdot)\); if the flow has no constant of motion, then \(T(\cdot)\) is necessarily constant and the centralizer is trivial. And this in turn then yields uniqueness (rather than local uniqueness) of conjugacies.
\end{remark}
Proposition \ref{PROPCentralizerCloseToIdentity} also implies in particular:
\begin{proposition}\label{LEMCentralizerCloseToIdentity}
Let \(\Phi\) be a \(C^r\) Axiom A flow on a closed manifold \(M\) and let $\epsilon>0$ be an expansive constant for $\Phi\rest{\NW(\Phi)}$. If\/ $f\in \mathcal{C}^0(M)$ and $d_0(f, \mathrm{Id})<\epsilon$,  then $f(x)\in \mathcal{O}(x)$ for all\/ $x\in\NW(\Phi)$.
\end{proposition}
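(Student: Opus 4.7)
The plan is to reduce this to a direct application of Proposition \ref{PROPCentralizerCloseToIdentity}\eqref{itemFixOrbits} applied to the restricted flow \(\Phi\rest{\NW(\Phi)}\). The key observation is that for an Axiom A flow, \(\NW(\Phi)\) is by definition a hyperbolic set, and the restricted flow is Bowen--Walters expansive on it (a standard consequence of hyperbolicity; this is what makes the phrase ``expansive constant for \(\Phi\rest{\NW(\Phi)}\)'' meaningful). Since Bowen--Walters expansivity is strictly stronger than kinematic expansivity---a Bowen--Walters expansivity constant is automatically a separation constant, as one sees by taking the trivial reparameterization \(s(t)=t\) in the Bowen--Walters definition---we may use \(\epsilon\) as a separation constant for the restricted flow.

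First I would verify that \(f\) preserves \(\NW(\Phi)\). Since \(f\) is a self-homeomorphism of \(M\) commuting with every \(\varphi^t\), and the nonwandering condition is formulated purely in terms of the \(\Phi\)-action, \(f\) sends nonwandering points to nonwandering points; thus \(f\rest{\NW(\Phi)}\) is a well-defined homeomorphism of \(\NW(\Phi)\) lying in \(\mathcal{C}^0(\Phi\rest{\NW(\Phi)})\). The hypothesis \(d_0(f,\mathrm{Id})<\epsilon\) on \(M\) clearly restricts to the same bound on \(\NW(\Phi)\).

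Then I would invoke Proposition \ref{PROPCentralizerCloseToIdentity}\eqref{itemFixOrbits} applied to the kinematic-expansive flow \(\Phi\rest{\NW(\Phi)}\), with separation constant \(\epsilon\) (playing the role of the \(\delta\) in that proposition, for some associated displacement \(\epsilon'\) which plays no role in the conclusion we want). The proposition immediately yields that for every \(x\in\NW(\Phi)\), \(f(x)=\varphi^t(x)\) for some small \(t\), so in particular \(f(x)\in\mathcal{O}(x)\), as required.

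There is essentially no obstacle here; the content of the statement is packaged into Proposition \ref{PROPCentralizerCloseToIdentity}\eqref{itemFixOrbits}, and the only thing to check is the bookkeeping that $f$ restricts to a commuting homeomorphism on the hyperbolic basic piece and that the given expansivity constant functions as a separation constant there. The proposition is phrased for Axiom A flows on \(M\) rather than for the abstract restricted system precisely because this is the form in which it will be applied in the subsequent perturbation arguments.
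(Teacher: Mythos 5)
Your proof is correct and takes essentially the same approach as the paper, which itself simply records this as an immediate consequence of Proposition \ref{PROPCentralizerCloseToIdentity}\eqref{itemFixOrbits}. You have filled in the two routine checks the paper leaves implicit (that $f$ preserves $\NW(\Phi)$, and that the Bowen--Walters expansivity constant of the restricted hyperbolic flow serves as a kinematic-separation constant).
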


This result points to two issues in identifying the centralizer. The first is having to deal with wandering points.  Hyperbolicity helps describe the centralizer of an Axiom A flow on the nonwandering set, but we will need perturbation methods to ``control'' centralizers on the wandering set.  The second is that the discreteness of the centralizer in Proposition \ref{LEMCentralizerCloseToIdentity}
helps show that any commuting \emph{flow} is a constant-time reparametrization, but for a diffeomorphism far from the identity much more is needed to show that it is a time-$t$ map of the flow. That is the substance of the next sections.

\section{Centralizers for Axiom-A flows}
We now prove Theorems \ref{t.periodictrivial}
and \ref{t.residualtrivial}.  We first outline the arguments.
The first step (Subsection \ref{SBSFixBasins}) ensures that the commuting diffeomorphism fixes the various attractors and repellers as well as their basins. This can often be established for commuting \emph{homeomorphisms} by considering periods of closed orbits, But since we work in the smooth category and want to utilize fixed points as well, we use that a closed orbit and its image under a commuting diffeomorphism must have conjugate derivatives (Lemma \ref{LEMCentralizerPeriodicPoints}), which allows us by perturbation to force the commuting diffeomorphism to fix a periodic orbit or fixed point in each attractor/repeller, and hence that whole set itself. It then clearly fixes the entire basins as well (Lemma \ref{LEMCentralizerStableManifolds}). Thus, there is an open and dense set $\mathcal{U}$ of flows such that any diffeomorphism $g$ commuting with a flow in $\mathcal{U}$ fixes the basins of each attractor or repeller, and each attractor or repeller contains at least one fixed or periodic point whose orbit is fixed by the commuting diffeomorphism $g$.

In Subsection \ref{SBSLocalToGlobal} we show that once a commuting diffeomorphism has been identified on an open subset of one of these basins, then it is globally identified. This is done in 2 parts. 
For the open and dense set $\mathcal{U}$, Theorem \ref{THMCentralizerAttractor} ensures  that if two commuting diffeomorphisms agree in an open set of a basin, then they agree for the entire basin, and Theorem \ref{THMopensetscentralizers} then links the basins of the attractors and repellers to let us conclude that there is an open and dense set $\mathcal{V}$ of flows such that if two commuting maps agree on an open set, then they agree on the entire manifold. This reduces the proof of Theorems \ref{t.periodictrivial} and \ref{t.residualtrivial} to a local problem analogous to the results in \cite{PY89} for maps:
it remains to show that on a basin of an attractor or repeller any commuting diffeomorphism is a time-$t$ map of the flow.

We previously mentioned that the heart of the problem is in controlling nonwandering points, and accordingly, this remaining portion of the proof is the most difficult. We carry it out in 2 parts.
Subsections \ref{ss.linearization} and \ref{ss.liegroup} explain the reduction to an algebraic problem.
More specifically, normal-forms theory allows us to translate the local problem to an algebraic one, and Lemma \ref{LEMpowerofflow} uses Theorems \ref{THMCentralizerAttractor} and \ref{THMopensetscentralizers} to establish that the solution of the algebraic problem does indeed imply the solution of the dynamical problem and hence Theorems \ref{t.periodictrivial} and \ref{t.residualtrivial}.
Finally, the perturbations to solve the algebraic problem are carried out in Section \ref{SBSNoCompactPart}.

\subsection{Fixing the basins}\label{SBSFixBasins}
The first step towards limiting what diffeomorphisms commute with a hyperbolic flow is to see that typically a commuting \emph{homeomorphism} fixes the ``large scale'' or ``combinatorial'' structure of the flow, namely the pieces of the chain decomposition (including the various attractors and repellers) and their respective basins. The latter is an easy consequence of the former, which suggests that this is a \(C^0\)-open circumstance.
\begin{lemma}\label{LEMCentralizerStableManifolds}
Let \(\Phi\) be a \(C^r\) Axiom A flow on a closed manifold \(M\), \(f\in\mathcal{C}^0(\Phi)\), and $x\in M$. Then
\[
f(W^s(x, \Phi))=W^s(f(x), \Phi)\textrm{ and }f(W^u(x, \Phi))=W^u(f(x), \Phi).
\]
\end{lemma}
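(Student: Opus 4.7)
\smallskip
\noindent\textbf{Proof plan.} The plan is to characterize $W^s(x,\Phi)$ purely by the asymptotic behavior $d(\varphi^t(x),\varphi^t(y))\to0$ as $t\to\infty$ (which is exactly what the right-hand expressions in \eqref{eqstrongstunstmfsforflows} provide) and then push this characterization through $f$ using the commutation relation and continuity of $f$ on the compact manifold $M$.

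Concretely, first I would fix $y\in W^s(x,\Phi)$, so $d(\varphi^t(x),\varphi^t(y))\to0$ as $t\to\infty$. Since $f$ is a $C^0$ diffeomorphism of the compact manifold $M$, it is uniformly continuous: given $\epsilon>0$ there is $\delta>0$ with $d(a,b)<\delta\Rightarrow d(f(a),f(b))<\epsilon$. Choosing $t$ large enough that $d(\varphi^t(x),\varphi^t(y))<\delta$ and invoking $f\circ\varphi^t=\varphi^t\circ f$ yields
\[
d(\varphi^t(f(x)),\varphi^t(f(y)))=d(f(\varphi^t(x)),f(\varphi^t(y)))<\epsilon.
\]
Letting $\epsilon\to0$ shows $f(y)\in W^s(f(x),\Phi)$, proving the inclusion $f(W^s(x,\Phi))\subset W^s(f(x),\Phi)$.

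The reverse inclusion is obtained by applying exactly the same argument to $f^{-1}$: since $f$ is a homeomorphism and $f\circ\varphi^t=\varphi^t\circ f$ for all $t\in\R$ implies $f^{-1}\circ\varphi^t=\varphi^t\circ f^{-1}$, we get $f^{-1}(W^s(f(x),\Phi))\subset W^s(f^{-1}(f(x)),\Phi)=W^s(x,\Phi)$, hence $W^s(f(x),\Phi)\subset f(W^s(x,\Phi))$. The statement for $W^u$ follows by running time backwards (replace $t\to\infty$ by $t\to-\infty$, or equivalently apply the stable statement to the reverse flow, with which $f$ still commutes).

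There is no serious obstacle here: the argument is essentially formal, relying only on the asymptotic characterization of global strong stable/unstable manifolds given in \eqref{eqstrongstunstmfsforflows}, uniform continuity of $f$ on the compact $M$, and the fact that commutation with the flow passes to $f^{-1}$. The only step worth double-checking is that $f$ is indeed a homeomorphism (so that the $f^{-1}$ half makes sense), which is built into the definition of $\mathcal{C}^0(\Phi)$ as a set of $C^0$ diffeomorphisms.
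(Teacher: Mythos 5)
Your proof is correct, and it is the natural argument: the paper states this lemma without proof, evidently regarding it as an immediate consequence of the asymptotic characterization in \eqref{eqstrongstunstmfsforflows} together with uniform continuity of $f$ on the compact manifold and the fact that commutation with $\Phi$ passes to $f^{-1}$ and to the time-reversed flow. The only cosmetic point: when you ``choose $t$ large enough,'' you of course mean choosing $T$ so that $d(\varphi^t(x),\varphi^t(y))<\delta$ for \emph{all} $t\ge T$, which the convergence provides; with that understood, the argument is complete.
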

The set of fixed points of a flow \(\Phi\) is invariant under any \(f\in\mathcal{C}^0(\Phi)\), as is the set of \(T\)-periodic \emph{orbits} for any \(T>0\) and, crucially, the period of each. Any  \(f\in\mathcal{C}^1(\Phi)\) furthermore conjugates the derivatives as follows.
\begin{lemma}\label{LEMCentralizerPeriodicPoints}
If \(f\in\mathcal{C}^0(\Phi)\), then the chain-recurrent set \(\mathcal{R}(\Phi)\) is \(f\)-invariant, and if\/ $p\in M$ is a fixed point or \(T\)-periodic point of \(\Phi\), then so is $f(p)$ (i.e.,  with the same period). If, furthermore, \(f\in\mathcal{C}^1(\Phi)\), then the derivatives of\/ $\varphi^T$ at $p$ and $f(p)$ are (linearly) conjugate.
\end{lemma}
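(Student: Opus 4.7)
The plan is to treat the three claims in sequence, each one following quickly from the commutation relation $f\circ\varphi^t=\varphi^t\circ f$ combined with, respectively, uniform continuity of $f$, invertibility of $f$, and the chain rule.

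\textbf{Step 1: \(f\)-invariance of \(\mathcal{R}(\Phi)\).} Since $f$ is a homeomorphism of the compact space $M$ (or $X$), it is uniformly continuous. Given $\epsilon>0$, choose $\delta>0$ with $d(x,y)<\delta\Rightarrow d(f(x),f(y))<\epsilon$. If $g\colon I\to X$ is a periodic $\delta$-chain through $x$, then for $t,t+\tau\in I$ with $|\tau|<1$,
\[
d\bigl(f(g(t+\tau)),\,\varphi^\tau(f(g(t)))\bigr)=d\bigl(f(g(t+\tau)),\,f(\varphi^\tau(g(t)))\bigr)<\epsilon,
\]
using $f\circ\varphi^\tau=\varphi^\tau\circ f$. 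Thus $f\circ g$ is a periodic $\epsilon$-chain through $f(x)$, and since $\epsilon$ was arbitrary, $f(x)\in\mathcal{R}(\Phi)$.

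\textbf{Step 2: Preservation of periods.} If $\varphi^T(p)=p$ then $\varphi^T(f(p))=f(\varphi^T(p))=f(p)$, so $f(p)$ is a fixed or $T$-periodic point whose minimal period $T_1$ divides $T$. To see the period cannot shrink, use that $f^{-1}$ also commutes with $\Phi$ (apply $f^{-1}$ on both sides of $f\circ\varphi^t=\varphi^t\circ f$). Then any period of $f(p)$ is a period of $f^{-1}(f(p))=p$, so the minimal periods agree. The fixed-point case ($T$ arbitrarily small, equivalently $\varphi^t(p)=p$ for all $t$) is the same argument.

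\textbf{Step 3: Linear conjugacy of derivatives.} Assuming $f\in\mathcal{C}^1(\Phi)$, differentiate the identity $f\circ\varphi^T=\varphi^T\circ f$ at $p$ (where $\varphi^T(p)=p$ and $\varphi^T(f(p))=f(p)$) to obtain
\[
D_pf\circ D_p\varphi^T=D_{f(p)}\varphi^T\circ D_pf,
\]
so that $D_{f(p)}\varphi^T=(D_pf)\circ D_p\varphi^T\circ(D_pf)^{-1}$, which is the asserted linear conjugacy (with $D_pf\in\mathrm{GL}(T_pM,T_{f(p)}M)$ since $f$ is a diffeomorphism).

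There is really no serious obstacle here; the mildest care is needed in Step 1 to verify that the commutation identity lets the uniform continuity of $f$ turn a $\delta$-chain through $x$ into an $\epsilon$-chain through $f(x)$, and in Step 2 to invoke the fact (an easy algebraic consequence of commutation) that $f^{-1}$ also lies in $\mathcal{C}^0(\Phi)$, so that periods cannot decrease under $f$.
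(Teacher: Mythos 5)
Your proof is correct and, where it overlaps with the paper's, takes the same route: write $f(\varphi^t(p))=\varphi^t(f(p))$ to see $f(p)$ is periodic, and differentiate $f\circ\varphi^T=\varphi^T\circ f$ at $p$ to read off the linear conjugacy $D_{f(p)}\varphi^T=(D_pf)D_p\varphi^T(D_pf)^{-1}$. You are, however, somewhat more thorough than the published proof. The paper's proof begins directly with ``If $\varphi^t(p)=p$, then $f(p)=f(\varphi^t(p))=\varphi^t(f(p))$'' and never addresses the $f$-invariance of $\mathcal{R}(\Phi)$, which you handle cleanly in Step~1 via uniform continuity (note that ``$f$-invariant'' should mean $f(\mathcal{R}(\Phi))=\mathcal{R}(\Phi)$, which follows because $f^{-1}\in\mathcal{C}^0(\Phi)$ as well, so the same argument applies to $f^{-1}$). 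Likewise, the paper concludes ``$f(p)$ is $T$-periodic'' from $\varphi^T(f(p))=f(p)$ but does not explicitly rule out that the \emph{minimal} period could drop, whereas your Step~2 closes that gap by observing that any period of $f(p)$ pulls back under $f^{-1}$ to a period of $p$. These are minor omissions the authors evidently regarded as routine, but your version is the more complete one.
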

\begin{proof}
If\/ $\varphi^t(p)=p$,  then $f(p)=f(\varphi^t(p))=\varphi^t(f(p))$.
If\/ $p\in M$ is a fixed point, then this holds for  all\/ $t\in \R$ and so $f(p)$ is a fixed point for \(\Phi\). If\/ $p\in M$ is \(T\)-periodic, then this holds for \(t=T\), so $f(p)$ is \(T\)-periodic. Differentiation of \(f(\varphi^t(p))=\varphi^t(f(p))\) then gives
\[
D\varphi^t(f(p))Df(p)=Df(\varphi^t(p))D\varphi^t(p)=Df(p)D\varphi^t(p).
\qedhere\]
\end{proof}
In particular, the spectrum of\/ $D\varphi^T(p)$ and $D\varphi^T(f(p))$ is the same; later this will be important for establishing triviality of the centralizer. However, \(C^0\)-arguments go some way: the spectral decomposition of an Axiom A flow \(\Phi\) is invariant under any \(f\in\mathcal{C}^0(\Phi)\) because \(f\) preserves chain-recurrence and chain-equivalence and hence permutes the chain-components of \(\Phi\).
\begin{lemma}\label{LEMPerturbToFixOrbits}
There is a \(C^0\)-open and \(C^\infty\)-dense set of Axiom A flows \(\Phi\) such that every \(f\in\mathcal{C}^0(\Phi)\) fixes each chain-component of \(\Phi\) that does not consist of a fixed point.
\end{lemma}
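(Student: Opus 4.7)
The plan is to distinguish chain-components by a numerical invariant preserved by any commuting homeomorphism, namely the minimum period of a non-fixed periodic orbit. By Smale's Spectral Decomposition (Theorem \ref{THMSpecDecomp}) an Axiom A flow $\Phi$ has finitely many basic sets (and hence finitely many non-trivial chain-components), and by Lemma \ref{LEMCentralizerPeriodicPoints} any $f\in\mathcal{C}^0(\Phi)$ permutes them while preserving periods of periodic points and sending fixed-point components to fixed-point components. If all non-trivial chain-components carry pairwise distinct minimum periods, this permutation must fix each of them.

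For each non-trivial basic set $\Lambda$, hyperbolicity of $\Lambda$ together with expansivity of $\Phi\rest{\NW(\Phi)}$ (Proposition \ref{PRPSeparationFinitary}) bounds periods of closed orbits in $\Lambda$ away from $0$ and forces the set of closed orbits in $\Lambda$ of period $\le T$ to be finite for each $T$; hence $T_{\min}(\Lambda)\dfn\inf\{T>0\mid\Lambda\text{ contains a non-fixed }T\text{-periodic orbit}\}$ is positive and attained. Let $\mathcal{U}$ consist of those Axiom A flows whose numbers $T_{\min}(\Lambda_i)$ are pairwise distinct as $\Lambda_i$ ranges over non-trivial basic sets. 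Then for $\Phi\in\mathcal{U}$ and $f\in\mathcal{C}^0(\Phi)$, since $f$ carries a minimum-period orbit of $\Lambda_i$ to one in $f(\Lambda_i)$ of the same period, distinctness forces $T_{\min}(f(\Lambda_i))=T_{\min}(\Lambda_i)$ and hence $f(\Lambda_i)=\Lambda_i$, giving the desired conclusion.

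It remains to verify that $\mathcal{U}$ is open and $C^\infty$-dense. Openness follows from structural stability (Theorem \ref{thmstrucstabilityhypsets}): basic sets have hyperbolic continuations, and a minimum-period closed orbit $\gamma\subset\Lambda$ continues to a closed orbit of any nearby $\Psi$ in $\Lambda_\Psi$ of period close to that of $\gamma$, so $T_{\min}$ varies continuously with the flow, making pairwise distinctness of finitely many values an open condition. For density, given an Axiom A flow $\Phi$ with two non-trivial basic sets $\Lambda_i,\Lambda_j$ sharing minimum period, I would pick a minimum-period closed orbit $\gamma\subset\Lambda_i$, a flow-box $U$ around a non-fixed point $p\in\gamma$ disjoint from all other basic sets, and multiply the vector field on $U$ by $1+\epsilon\rho$ for a compactly supported bump $\rho$ and small $\epsilon$; this shifts the period of $\gamma$ by a prescribed small amount while leaving the rest of the dynamics untouched and preserving Axiom A (which is $C^1$-open). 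Iterating over the finitely many offending pairs gives denseness. The main technical point is ensuring the local perturbation does not introduce a closed orbit in $\Lambda_i$ of period less than the shifted period of $\gamma$: this follows because only finitely many closed orbits in $\Lambda_i$ originally had period below any fixed bound, each is hyperbolic with period depending continuously on the flow, so for $\epsilon$ small enough their continuations all retain periods exceeding that of the new $\gamma$.
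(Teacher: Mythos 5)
Your proof takes essentially the same approach as the paper: distinguish the non-trivial chain-components by the least period of their closed orbits, invoke Lemma \ref{LEMCentralizerPeriodicPoints} to see that any $f\in\mathcal{C}^0(\Phi)$ permutes chain-components while preserving these periods, and obtain density by a time-change supported in a flow-box near a minimum-period orbit. The one divergence worth noting is that the lemma asserts $C^0$-openness, while your argument via Theorem \ref{thmstrucstabilityhypsets} (continuation of periodic orbits) establishes only $C^1$-openness; this is what is actually used downstream (cf.\ Remark \ref{REMPerturbToFixOrbits}), and your added care in verifying that $T_{\min}$ is attained and that the time-change cannot create shorter closed orbits is sound, though automatic since a time-change does not alter the set of closed orbits.
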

\begin{proof}
For each chain-component with a periodic orbit consider the least period in that chain component. These being pairwise distinct is a \(C^0\)-open condition and implies that these chain-components are each \(f\)-invariant (Lemma \ref{LEMCentralizerPeriodicPoints}), as are then their basins (Lemma \ref{LEMCentralizerStableManifolds}).

That these least periods are pairwise distinct is \(C^\infty\)-dense as follows: in each of these chain-components pick a periodic point \(p_i\) with that least period and a function \(\rho_i\) which is \(C^\infty\)-close to 1 and with \(\rho_i\equiv1\) outside a small neighborhood of \(p_i\) chosen such that the time-change \(\Phi'\) generated by the vector field \(\prod_i\rho_iX\), where \(X\) generates \(\Phi\), has distinct least periods.
\end{proof}
\begin{remark}\label{REMPerturbToFixOrbits}
Lemma \ref{LEMPerturbToFixOrbits} illustrates the presence of ``longitudinal'' effects specific to flows. This is related to the fact that \emph{conjugacies} between flows are rarer than orbit-equivalence, which is insensitive to time-changes, the very construction that gives rise to \(\Phi'\) in this proof. However, fixed points of \(\Phi\) have no meaningful longitudinal aspects, but \(C^1\)-techniques make sure they are fixed by \(f\): there is a \(C^1\)-open and \(C^\infty\)-dense set $\mathcal{U}_0$ of Axiom A flows such that each attractor or repeller has a fixed or periodic point where the derivative of the period or time-1 map is not conjugate to the corresponding derivative at any other such periodic orbit with the same period or fixed point. Lemma \ref{LEMCentralizerStableManifolds} then further implies that the basin of each attractor or repeller is fixed for any commuting diffeomorphism.
\end{remark}
In closing we note that we have not so far used the no cycles assumption.
\subsection{Rigidity: Local coincidence to global coincidence}\label{SBSLocalToGlobal}
The goal of this section is a global rigidity result, Theorem \ref{THMopensetscentralizers}, which may be of independent interest.

The first step, Theorem \ref{THMCentralizerAttractor}, is at the heart of reducing the proof of Theorems \ref{t.periodictrivial} and \ref{t.residualtrivial} to a local problem by fixing the commuting diffeomorphism on stable and unstable sets \emph{once it has been fixed on an open subset}; it is obtained by a minor modification of the discrete-time arguments in \cite{Anderson}. (For fixed or periodic attractors it is immediate from Theorem \ref{THMsimultaneouslinearization}.)
\begin{definition}
A linear map $A\colon\R^n\to\R^n$ is \emph{non\-resonant} if\/ $\mathrm{Re}\lambda_i\neq \mathrm{Re}\lambda_1^{m_1}\cdots \mathrm{Re}\lambda_n^{m_n}$ whenever \(0\le m_j\in\mathbb{Z}\) with $\sum m_j\geq 2$. A point \(p=\varphi^t(p)\) is non\-resonant if \(D\varphi^t(p)\) is.
\end{definition}

For a flow there is a similar, but slightly different notion of nonresonance that we use for the fixed points of the flow.

\begin{definition}
Denote the spectrum of an $n\times n$ matrix $A$ by $\Sigma(A)=\{\lambda_1,\dots, \lambda_n\}$, the eigenvalues of\/ $A$ repeated with multiplicity. \(A\) is said to be \emph{stable hyperbolic} if\/ $\mathrm{Re}\lambda<0$ for all\/ $\lambda\in \Sigma(A)$.
Define the function
\[
(\lambda, m)\mapsto\gamma(\lambda, m)\dfn\lambda-(m_1\lambda_1 + \cdots + \lambda_n m_n).
\]
A stable hyperbolic matrix is \emph{non\-resonant} if\/ $\mathrm{Re}\gamma(\lambda, m)\neq 0$ for any $m$ where $|m|\geq 2$ and any $\lambda\in \Sigma(A)$.
\end{definition}

There is an open and dense set $\mathcal{U}$ of \(\Phi\in \mathcal{U}_0\) for which each attractor contains a fixed or periodic point
that is nonresonant and where the derivative is not conjugate to the corresponding derivative at any other such fixed point or periodic orbit with the same period. Each attractor or repeller of such a flow then satisfies the hypotheses of the next theorem.
\begin{theorem}
\label{THMCentralizerAttractor}
Let \(\Phi\) be a $C^\infty$ flow on a manifold \(M\) and $\Lambda\subset M$ be a transitive hyperbolic attractor containing a fixed or periodic point $p$ that is non\-resonant.  If\/ $f_1, f_2\in\mathcal{C}^\infty(\Phi)$, and there exists an open set $V\subset W^s(\Lambda)$ such that $f_1\rest{V}=f_2\rest{V}$, then $f_1\rest{W^s(\Lambda)}=f_2\rest{W^s(\Lambda)}$.
\end{theorem}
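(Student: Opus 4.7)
Let $g \dfn f_2^{-1}\circ f_1$; this lies in $\mathcal{C}^\infty(\Phi)$ and equals the identity on $V$, so the goal reduces to showing $g|_{W^s(\Lambda)}=\mathrm{Id}$. The fixed set $F\dfn\{z\in M:g(z)=z\}$ is closed (by continuity of $g$) and $\Phi$-invariant (since $g$ commutes with $\Phi$), hence contains the open $\Phi$-invariant set $U_0\dfn\bigcup_{t\in\R}\varphi^t(V)\subset W^s(\Lambda)$.

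The plan has three steps: (i) use transitivity to get $g|_\Lambda=\mathrm{Id}$; (ii) use nonresonance at $p$ to upgrade this to $g=\mathrm{Id}$ on a tubular neighborhood of $\mathcal{O}(p)$; (iii) spread the local identity throughout $W^s(\Lambda)$ by density. For (i), topological transitivity of $\Lambda$ makes $\{x\in\Lambda:\omega(x)=\Lambda\}$ residual in $\Lambda$, and continuity of the stable foliation on a neighborhood of $\Lambda$ promotes this to density of $\{z\in W^s(\Lambda):\omega(z)=\Lambda\}$ in $W^s(\Lambda)$ (any $z$ near $\Lambda$ lies on a continuously varying local stable leaf, and any $z$ far out flows into such a neighborhood). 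Picking such a $y\in V$ gives $\Lambda=\omega(y)\subset\overline{U_0}\subset F$, so $g|_\Lambda=\mathrm{Id}$. In particular $g(p)=p$, $g$ pointwise fixes $W^u_{\mathrm{loc}}(p)\subset\Lambda$, and a subsequence $\varphi^{t_n}(y)\to p$ exhibits $p\in\overline{U_0}$.

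For (ii), nonresonance of the Poincar\'e derivative $A_P\dfn DP(p)$ lets Sternberg's theorem linearize $P$ smoothly on a transverse section $\Sigma$ through $p$; combined with Floquet, this yields smooth coordinates on a tubular neighborhood $N$ of $\mathcal{O}(p)$ in which $\Phi$ acts linearly, as $(\theta,\xi)\mapsto(\theta+t,e^{tL}\xi)$ with $e^{TL}$ conjugate to $A_P$. Commutation of $g$ with this linear flow then forces $g$ into the normal form $g(\theta,\xi)=(\theta+\tau(e^{-\theta L}\xi),e^{\theta L}B(e^{-\theta L}\xi))$, where $B$ is a smooth self-map commuting with $A_P$ and $\tau$ is an $A_P$-invariant scalar. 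Because $g|_{U_0\cap N}=\mathrm{Id}$ on an open set whose closure contains $p$, smoothness forces the formal Taylor series of $B$ at $0$ to equal that of the identity (all partial derivatives are limits from $U_0$); the Sternberg--Chen rigidity for smooth germs commuting with a nonresonant hyperbolic linear map (formal jet determines germ) then gives $B=\mathrm{Id}$ and $\tau\equiv 0$ near $0$. Hence $g=\mathrm{Id}$ on a (possibly smaller) tubular neighborhood $N'$ of $\mathcal{O}(p)$. The fixed-point case $\Lambda=\{p\}$ (where $p$ is a sink) is the same argument with the Floquet step trivial.

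For (iii), the set $D\dfn\{z\in W^s(\Lambda):p\in\omega(z)\}$ is the stable saturation of $\{x\in\Lambda:p\in\omega(x)\}$, which is a $G_\delta$ containing the residual set of transitive points and hence residual in $\Lambda$; the density-of-stable-leaves argument from (i) then makes $D$ dense in $W^s(\Lambda)$. For any $z\in D$ some forward iterate $\varphi^{t_n}(z)$ lies in $N'$, so $g(\varphi^{t_n}(z))=\varphi^{t_n}(z)$, and commuting with $\varphi^{-t_n}$ yields $g(z)=z$. Closedness of $F$ then upgrades $F\supset D$ to $F\supset W^s(\Lambda)$, finishing the proof. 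The main obstacle is step (ii): turning the open equality $g|_{U_0}=\mathrm{Id}$ (which only accumulates at $p$) into $g|_{N'}=\mathrm{Id}$ uses nonresonance essentially, both to produce Sternberg's smooth linearization around the periodic orbit and to invoke the jet-determines-germ rigidity for smooth centralizers of nonresonant hyperbolic maps.
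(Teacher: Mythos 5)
Your proof takes a genuinely different route from the paper's, and it has a gap in the handling of the longitudinal (time-shift) component.

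\textbf{Route comparison.} The paper never needs your step (i) (establishing $g|_\Lambda=\mathrm{Id}$ via transitivity), and it avoids the Floquet tubular-neighborhood linearization entirely. Instead it works purely on the \emph{stable leaf} $W^s(\varphi^{T_0}(p))$ of a point on the periodic orbit chosen so that $V\cap W^s(\varphi^{T_0}(p))$ has nonempty relative interior (this uses density of $W^{cs}(\mathcal{O}(p))$ in $W^s(\Lambda)$). On that leaf the return map $\varphi^{\pi(p)}$ is a nonresonant smooth \emph{contraction}; Sternberg linearizes it, Kopell/simultaneous linearization (Theorem \ref{THMsimultaneouslinearization}) forces $f=f_1\circ f_2^{-1}$ to be \emph{linear} in those coordinates, and a linear map that is the identity on a relatively open subset is the identity. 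Then density and $\Phi$-equivariance spread the conclusion to $W^s(\Lambda)$. Crucially there is no flow direction on the stable leaf, so no $\tau$ to control.

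\textbf{The gap.} In your step (ii), the normal form $g(\theta,\xi)=(\theta+\tau(e^{-\theta L}\xi),\,e^{\theta L}B(e^{-\theta L}\xi))$ is correct, and Kopell's theorem does force $B$ linear (hence $B=\mathrm{Id}$ once the Taylor series of $B$ at $0$ is the identity). But your claim that the Sternberg--Chen rigidity ``then gives $B=\mathrm{Id}$ and $\tau\equiv 0$ near $0$'' conflates two different objects: the rigidity/Kopell statement applies to a smooth \emph{map} commuting with the nonresonant hyperbolic $A_P$, not to a smooth $A_P$-\emph{invariant function}. An $A_P$-invariant smooth function with zero Taylor jet at $0$ need not vanish: for example $A_P(\xi_1,\xi_2)=(\tfrac12\xi_1,2\xi_2)$ admits nonzero flat invariant functions such as $\tau(\xi)=h(\xi_1\xi_2)$ with $h$ flat at $0$, and knowing $\tau=0$ on an open set accumulating at the origin together with $A_P$-invariance still need not give $\tau\equiv0$ on a full neighborhood of $0$. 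So your argument does not yet yield $g=\mathrm{Id}$ on a tubular neighborhood $N'$, which your step (iii) requires. This is precisely the ``longitudinal'' difficulty the paper sidesteps by restricting to the stable leaf, where the induced dynamics is a contraction and there is no time-shift component at all.

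A secondary point: the Floquet step in continuous time may need a double cover (or complex coordinates) when the monodromy has no real logarithm; this is a fixable technicality but it is extra machinery the paper's leaf-wise argument avoids.
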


We delay the proof of the result until the next section.
The desired global rigidity result, Theorem \ref{THMopensetscentralizers}, is now obtained by linking the basins of attractors and repellers.
In \cite{PY89} the maps are assumed to have strong transversality in order to link the basins, but after perturbations this can be done for Axiom A maps with the no cycles property \cite{Fis1}:
\begin{proposition}\label{prop.perturbationconnecting}
There is an open dense $\mathcal{V}\subset\mathcal{U}\subset\mathcal{A}^r(M)$ for $1\leq r\leq \infty$, such that if\/ $\Lambda$ and $\Lambda'$ are attractors for $\Phi\in \mathcal{V}$ with
\[
\overline{W^s(\Lambda)}\cap \overline{W^s(\Lambda')}\neq \emptyset,
\]
then there exists a hyperbolic repeller $\Lambda_r$, such that
\[
W^s(\Lambda)\cap W^u(\Lambda_r)\neq\emptyset\textrm{ and }W^s(\Lambda')\cap W^u(\Lambda_r)\neq\emptyset.
\]
\end{proposition}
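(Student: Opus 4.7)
My proof plan adapts the discrete-time argument of \cite{Fis1} to the flow setting, using the partial order \(\gg\) on basic sets and the no-cycles condition to propagate heteroclinic connections from the boundary between the two basins upward to a common repeller.

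First I would extract a saddle basic set above both attractors. Let \(x\in\overline{W^s(\Lambda)}\cap\overline{W^s(\Lambda')}\). Since \(\Lambda\neq\Lambda'\) are attractors, their basins \(W^s(\Lambda), W^s(\Lambda')\) are open and disjoint, so \(x\) lies in neither; by the spectral decomposition there is then a \emph{saddle} basic set \(\Lambda_0\) with \(x\in W^s(\Lambda_0)\). Picking \(x_n\to x\) with \(\omega(x_n)\subset\Lambda\), flowing forward brings \(x_n\) into a small neighborhood of \(\Lambda_0\); the local product structure of the hyperbolic set then produces heteroclinic points in \(W^u(\Lambda_0)\cap W^s(\Lambda)\), giving \(\Lambda_0\gg\Lambda\). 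The symmetric argument gives \(\Lambda_0\gg\Lambda'\).

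Next I would walk up the partial order to a repeller. If \(\Lambda_0\) is not itself a repeller, take \(y\in\Lambda_0\); a backward orbit of a point slightly off \(\Lambda_0\) escapes a neighborhood, and its \(\alpha\)-limit lies in some basic set \(\Lambda_1\) with \(\Lambda_1\gg\Lambda_0\). Iterating, the finiteness of the spectral decomposition and the no-cycles condition force the chain to terminate at a repeller \(\Lambda_r\), so we get \(\Lambda_r\gg\Lambda_{k}\gg\cdots\gg\Lambda_1\gg\Lambda_0\gg\Lambda\) (and the same chain down to \(\Lambda'\)). To turn this into the desired \(W^u(\Lambda_r)\cap W^s(\Lambda)\neq\emptyset\) (resp.\ \(\ldots\cap W^s(\Lambda')\neq\emptyset\)) one needs transitivity of \(\gg\), which follows from an inclination-lemma argument along each heteroclinic orbit in the chain, provided the intersections \(W^u(\Lambda_i)\cap W^s(\Lambda_{i-1})\) are \emph{transverse} (along the weak-unstable/weak-stable manifolds, accounting for the flow direction).

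Finally I would define \(\mathcal{V}\subset\mathcal{U}\) by imposing Kupka--Smale-type transversality on all existing heteroclinic intersections between basic sets. This set is \(C^1\)-open because both hyperbolicity of each \(\Lambda_i\) and transversality of intersections between (weak) stable/unstable manifolds are \(C^1\)-open, and structural stability of basic sets (Theorem \ref{thmstrucstabilityhypsets}) ensures the \(\gg\)-graph survives perturbation. Density is obtained by a standard local perturbation along non-transverse heteroclinic orbits, performed in a flowbox away from the basic sets themselves; the main obstacle is to ensure that these perturbations do not introduce any new cycles. This is handled exactly as in \cite{Fis1}: because small \(C^r\) perturbations cannot reverse the existing \(\gg\)-relations (structural stability) and the perturbation is localized at a heteroclinic orbit between two basic sets that are already comparable in the order, no new cycle can appear. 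Once \(\mathcal{V}\) is defined this way, the chain from Step~2 yields the desired repeller \(\Lambda_r\) with \(W^u(\Lambda_r)\cap W^s(\Lambda)\neq\emptyset\) and \(W^u(\Lambda_r)\cap W^s(\Lambda')\neq\emptyset\), completing the proof.
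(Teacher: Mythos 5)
Your proposal takes essentially the same approach as the paper: the paper's proof of this proposition is a one-sentence citation to the discrete-time counterpart in \cite{Fis1} (no cycles plus localized perturbations on wandering points), and your argument is a faithful reconstruction of that strategy in the flow setting, with the right ingredients (saddle basic set in the shared boundary, climbing the partial order to a repeller, transversality and the inclination lemma to propagate \(\gg\), and local perturbations in a flowbox away from the basic sets for density). The only thing to keep in mind is that the transversality you impose should be along weak stable/unstable manifolds at the heteroclinic orbits actually used, not a global strong-transversality hypothesis (which the paper explicitly avoids), and you do flag this correctly.
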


The proof is almost identical to its discrete-time counterpart \cite[Proposition 3.2]{Fis1} as the proof uses properties of no cycles and perturbations on the wandering points, and these hold for flows as well.

This in turn implies Theorem \ref{THMopensetscentralizers}.

\begin{proof}[Proof of Theorem \ref{THMopensetscentralizers}]
Let $\Lambda_1,..., \Lambda_k$ denote the hyperbolic attractors of\/ $\Phi$.  If\/ $f\in  \mathcal{C}^\infty(\Phi)$ is the identity on a non\-empty open $U\subset M$, then there is an $i$ such that $\mathrm{int}(W^s(\Lambda_i)\cap U)\neq \emptyset$, hence $f\rest{W^s(\Lambda_i)}=\mathrm{Id}\rest{W^s(\Lambda_i)}$ by Theorem \ref{THMCentralizerAttractor}.
Now for $j$ such that
\[
\overline{W^s(\Lambda_i)}\cap \overline{W^s(\Lambda_j)}\neq \emptyset
\]
there is a repeller $\Lambda_r$ with
\[
W^s(\Lambda_i)\cap W^u(\Lambda_r)\neq\emptyset\neq W^s(\Lambda_j)\cap W^u(\Lambda_r).
\]
Therefore, $f$ is the identity on $W^u(\Lambda_r)\cup W^s(\Lambda_j)$ since the intersection of the basins is an open set.  Hence, $f$ is the identity on the open and dense set of points contained in the basin of an attractor or repeller.  Continuity of\/ $f$ implies that $f$ is the identity on all of\/ $M$.
\end{proof}

%

\subsection{Linearization theorems for flows and maps}\label{ss.linearization}
We now take the first step in the reduction to an algebraic problem. Under the nonresonance condition we have generically established, standard normal-form theory becomes the theory of smooth linearization \cite{Sell85} on the local stable manifold of a hyperbolic fixed or periodic sink or source or a periodic point for a hyperbolic attractor, and it implies that \emph{any element of the centralizer is simultaneously linearized.}


\begin{definition}\label{DEFSternbergcondition}
An $n\times n$ stable hyperbolic matrix $A$
satisfies the \emph{Sternberg condition of order} $N\geq 2$ if\/ $\mathrm{Re}\gamma(\lambda, m)\neq 0$ for all\/ $\lambda\in\Sigma(A)$ and $m=(m_1,\dots,m_n)\in\N^n$ with $|m|\dfn m_1 + \cdots + m_n\le N$.
\end{definition}
\begin{theorem}[Sternberg's Theorem]\label{THMsternberg}
Let $Q\geq 2$ and $R$ be $C^{2Q}$ on an open set $U\subset \R^n$ containing the origin.  If\/ $D^kR(0)=0$ for $k=0, 1$ and $A$ is a stable hyperbolic matrix (i.e.,  all its eigenvalues are inside the unit circle) such that $A$ satisfies the Sternberg condition of order $Q$, then the flow \(\Phi\) on \(\R^n\) generated by  $x'=Ax +R(x)$ admits a $C^{\lfloor Q/\rho\rfloor}$-linearization near 0, with \(\rho\) defined by
\[
\rho\dfn\rho(A)\dfn\frac{\max\{|\mathrm{Re}\lambda|\,:\, \lambda \in \Sigma(A)\}}{\min\{|\mathrm{Re}\lambda|\,:\, \lambda\in \Sigma(A)\}}.
\]
\end{theorem}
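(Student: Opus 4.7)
The plan is to prove Sternberg's theorem for flows in two stages: first a polynomial normal form that removes the nonlinearity up to order $Q$, and then a smoothing/limiting argument that eliminates the remaining flat tail while tracking the loss of regularity measured by $\rho$. Throughout, I will work with the vector field $X(x) = Ax + R(x)$ and seek a $C^k$-diffeomorphism $h$ fixing 0 with $h_*X = A$, equivalently $h \circ \varphi^t = e^{tA}\circ h$.

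First I would carry out the standard Poincar\'e--Dulac reduction. Inductively for $d = 2,3,\dots,Q$, assume all nonlinear terms of degree $< d$ have been killed and write $X = A + X_d + O(|x|^{d+1})$, where $X_d$ is a vector-valued homogeneous polynomial of degree $d$. Attempting a change of coordinates $y = x + P_d(x)$ with $P_d$ homogeneous of degree $d$ yields the homological equation
\[
(\mathrm{ad}_A P_d)(x) \dfn AP_d(x) - DP_d(x)\cdot Ax = X_d(x).
\]
Decomposing in the eigenbasis, $\mathrm{ad}_A$ acts on the monomial $x^m e_j$ (for the $j$-th basis vector and multi-index $m$ with $|m|=d$) with eigenvalue $\gamma(\lambda_j,m) = \lambda_j - \sum m_i \lambda_i$. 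The Sternberg condition of order $Q$ guarantees $\mathrm{Re}\,\gamma(\lambda_j,m)\neq 0$ for every $|m|\le Q$ and every $j$, so $\mathrm{ad}_A$ is invertible on the space of degree-$d$ vector polynomials. Choosing $P_d$ accordingly kills $X_d$. Iterating $d=2,\dots,Q$ produces a polynomial diffeomorphism $H_Q$, which is smooth because we only performed finitely many polynomial changes of variables, such that $(H_Q)_*X = Ax + \widetilde R(x)$ with $D^k\widetilde R(0)=0$ for $k\le Q$.

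The second stage is to eliminate $\widetilde R$ by a limit construction. Let $\tilde\varphi^t$ be the flow of $Ax+\widetilde R(x)$; stable hyperbolicity means $|\tilde\varphi^t(x)|\le Ce^{-\mu t}|x|$ for some $\mu>0$ and all small $x,\ t\ge 0$, while $|\widetilde R(y)|=O(|y|^{Q+1})$. Define
\[
h(x) \dfn \lim_{t\to\infty} e^{-tA}\,\tilde\varphi^{\,t}(x),
\]
and check $h\circ\tilde\varphi^s = e^{sA}\circ h$ directly from the definition once convergence is established. Uniform convergence on a neighborhood of $0$ follows because, writing $\tilde\varphi^t(x) = e^{tA}x + \epsilon_t(x)$, Gronwall's inequality applied to the variation-of-constants formula $\epsilon_t(x) = \int_0^t e^{(t-s)A}\widetilde R(\tilde\varphi^s(x))\,ds$ gives $|e^{-tA}\epsilon_t(x)| \lesssim \int_0^t e^{-s\,\mathrm{Re}\lambda_{\max}}e^{-(Q+1)s\,|\mathrm{Re}\lambda_{\min}|}\,ds$ (schematically), which is summable precisely because the Sternberg threshold has been crossed. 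Composing $h$ with $H_Q$ gives the desired linearization.

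The main obstacle, and the source of the $\lfloor Q/\rho\rfloor$ regularity, is tracking derivatives in the second stage. Differentiating $e^{-tA}\tilde\varphi^t$ up to order $k$ produces factors of $|D\tilde\varphi^t|^k$ bounded by $e^{kt\,|\mathrm{Re}\lambda_{\max}|}$ against the decay $|D^k\widetilde R(\tilde\varphi^s)|\lesssim |\tilde\varphi^s|^{Q+1-k}\lesssim e^{-(Q+1-k)s\,|\mathrm{Re}\lambda_{\min}|}$; the resulting integral converges iff $(Q+1-k)|\mathrm{Re}\lambda_{\min}| - k|\mathrm{Re}\lambda_{\max}| > 0$, i.e., $k < (Q+1)/(1+\rho)$ after accounting for both the derivative chain rule and the $e^{-tA}$ prefactor. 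A careful bookkeeping of these exponentials (and an interpolation for the top derivative, following Belitski\u\i{}'s argument) gives the stated $C^{\lfloor Q/\rho\rfloor}$ regularity. The other delicate point is verifying that $Dh(0)$ is invertible so that $h$ is a local diffeomorphism; this is immediate since the limit defining $h$ preserves the linear part, $Dh(0) = I$.
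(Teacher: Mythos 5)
The paper does not prove this theorem. It is stated as a background result and cited to Sell (the \texttt{\char`\\cite\{Sell85\}} reference), so there is no proof in the paper to compare against. What follows is an assessment of your sketch on its own merits.

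Your Stage 1 (Poincar\'e--Dulac) is correct and standard: the homological operator $\mathrm{ad}_A P_d = AP_d - DP_d\cdot Ax$, its diagonalization on monomials $x^m e_j$ with eigenvalue $\gamma(\lambda_j,m)=\lambda_j-\langle m,\lambda\rangle$, and invertibility guaranteed by the Sternberg condition of order $Q$ are all as they should be, and the resulting $H_Q$ is polynomial, hence $C^\infty$.

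Stage 2 is where the proof is genuinely incomplete, and in fact internally inconsistent. First, the sign conventions are muddled: for a stable hyperbolic $A$ with $a\dfn\min|\mathrm{Re}\lambda|$ and $b\dfn\max|\mathrm{Re}\lambda|$, one has $\|e^{-sA}\|\lesssim e^{bs}$ (growth), $\|D\tilde\varphi^s\|\lesssim e^{-as}$ (decay), and $|\widetilde R(\tilde\varphi^s(x))|\lesssim e^{-a(Q+1)s}$; your written estimate ``$|D\tilde\varphi^t|^k$ bounded by $e^{kt|\mathrm{Re}\lambda_{\max}|}$'' has the wrong sign, and $D^k\bigl(e^{-tA}\tilde\varphi^t\bigr)=e^{-tA}D^k\tilde\varphi^t$ does not produce a $k$-th power of $D\tilde\varphi^t$ in the first place --- the chain-rule powers appear inside $D^k[\widetilde R\circ\tilde\varphi^s]$, not outside. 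Second, and more importantly, the heuristic you derive, $k<(Q+1)/(1+\rho)$, does not match the claimed $\lfloor Q/\rho\rfloor$: already at $\rho=1$ your bound gives $k\lesssim Q/2$ while the theorem gives $C^{Q}$. So either the bookkeeping is wrong or the scattering limit as you have set it up is lossier than the iterative proof used by Sell. Either way, invoking ``careful bookkeeping\dots following Belitski\u\i'' is not a substitute for the computation that actually produces $\lfloor Q/\rho\rfloor$; this is exactly the hard part of the theorem, and it is the part the sketch defers. Until the derivative estimate for $D^k h$ is carried out consistently --- with the spectral anisotropy of $e^{-sA}$ (different growth rates in different eigendirections), the inductive decay bounds on $D^j\tilde\varphi^s$ for $2\le j\le k$, and the distinction between the terms where $D^j\widetilde R$ still gains smallness ($j\le Q$) and where it does not ($j>Q$) --- the stated exponent is not established. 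Given that the claim is only being quoted in the paper, citing \cite{Sell85} is the appropriate move; a self-contained proof would require the omitted computation in full.
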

We remark that a similar result holds for an unstable hyperbolic matrix simply by taking the inverse of the flow.

We say that a stable hyperbolic matrix is \emph{non\-resonant} if\/ $\mathrm{Re}\gamma(\lambda, m)\neq 0$ for any $m$ and any $\lambda\in \Sigma(A)$.
The following immediate corollary is the main application of Sternberg's Theorem.
\begin{corollary}\label{CORsternbergflows}
If\/ $f\in C^{\infty}$ and $x'=f(x)=Ax +R(x)$ where $A$ is a non\-resonant stable hyperbolic matrix, then there exists a $C^{\infty}$-smooth linearization.
\end{corollary}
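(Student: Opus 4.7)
The plan is to deduce Corollary \ref{CORsternbergflows} by applying Theorem \ref{THMsternberg} for arbitrarily large $Q$ and then stitching the resulting finite-regularity linearizations into a single $C^\infty$ one.

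First I would observe that the nonresonance hypothesis on $A$ is precisely the statement that the Sternberg condition of order $N$ (Definition \ref{DEFSternbergcondition}) holds for \emph{every} integer $N \geq 2$: the condition $\mathrm{Re}\,\gamma(\lambda,m)\neq0$ is required for all $\lambda \in \Sigma(A)$ and all multi-indices $m$ with $|m|\geq 2$, and nonresonance gives exactly this for all such $m$, with no bound on $|m|$. Since $f \in C^\infty$, the regularity hypothesis $R \in C^{2Q}$ of Theorem \ref{THMsternberg} is automatically satisfied for every $Q$.

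Next, for each integer $Q \geq 2$ I invoke Theorem \ref{THMsternberg} to obtain a local diffeomorphism $h_Q$ near $0$, of class $C^{\lfloor Q/\rho\rfloor}$ with $\rho = \rho(A)$ fixed by $A$, conjugating the flow generated by $x' = Ax + R(x)$ to the linear flow generated by $x' = Ax$. Since $\rho$ is a fixed finite number and $Q$ ranges over all integers $\geq 2$, the regularities $\lfloor Q/\rho\rfloor$ tend to $\infty$. The remaining step is to see that these $h_Q$ may be taken to coincide on a common neighborhood of the origin. Normalizing so that $Dh_Q(0) = I$, any two candidate linearizations differ by a diffeomorphism tangent to the identity at $0$ that commutes with the linear flow $e^{tA}$; the standard uniqueness statement accompanying Sternberg's theorem, which under nonresonance forces such a diffeomorphism to be the identity near $0$, then shows $h_Q = h_{Q'}$ on a common neighborhood for all $Q, Q' \geq 2$.

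Consequently the common germ $h$ at $0$ is of class $C^{\lfloor Q/\rho\rfloor}$ for every $Q$, hence $C^\infty$, which is the desired linearization. The only real content beyond a direct appeal to Theorem \ref{THMsternberg} is the uniqueness-and-coincidence step, which is routine in the nonresonant setting; there is no serious obstacle, and the remark following Theorem \ref{THMsternberg} shows that the symmetric statement for unstable hyperbolic $A$ follows by reversing time.
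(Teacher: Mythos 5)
Your argument is correct and it unpacks what the paper treats as immediate (the paper offers no proof of the corollary). The one step that needs more than first-order care is the stitching: $Dg(0)=I$ for the transition $g=h_Q\circ h_{Q'}^{-1}$ commuting with $e^{tA}$ does not by itself force $g=\mathrm{id}$ when $g$ has only finite regularity, because the hyperbolic rate-comparison argument underlying that uniqueness requires $g$ to be tangent to the identity to order exceeding $\rho(A)$. That higher-order tangency does hold: nonresonance uniquely determines the Taylor coefficients of any normalized linearization (they solve the homological equations at each order), so $h_Q$ and $h_{Q'}$ share the same $\min(\lfloor Q/\rho\rfloor,\lfloor Q'/\rho\rfloor)$-jet at the origin, which exceeds $\rho$ once $Q$ and $Q'$ are large; for such $Q$ the germs coincide, and that suffices, since only large $Q$ matters for the $C^\infty$ conclusion. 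Equivalently, one can simply observe that the linearizing diffeomorphism constructed in the proof of Sternberg's theorem does not depend on $Q$, so the corollary is literally Theorem~\ref{THMsternberg} read with $Q\to\infty$.
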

This is more natural in the form of a local restatement.
\begin{theorem}\label{THMSternbergMapsLinearization}
If\/ $f\colon\R^n\to\R^n$ is a $C^\infty$ diffeomorphism and the origin is a hyperbolic sink for $f$ with $Df(0)$ non\-resonant, then there exists a $C^\infty$-smooth linearization of\/ $f$.
\end{theorem}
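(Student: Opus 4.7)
The plan is to prove the discrete-time statement directly in the spirit of Sternberg's theorem, rather than trying to embed \(f\) as a time-one map of a flow and quoting Corollary \ref{CORsternbergflows}; the flow embedding is obstructed when \(A := Df(0)\) has (pairs of) negative real eigenvalues, so working directly with \(f\) is cleaner. The proof I would write proceeds in two stages: formal conjugation, then a flat correction exploiting hyperbolic contraction.

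Stage 1 (formal linearization). Construct a formal power series conjugation \(\hat h = \mathrm{Id}+\text{(higher order)}\) satisfying \(\hat h\circ f=A\circ \hat h\) as formal series at \(0\). Writing \(f(x)=Ax+\sum_{k\ge 2}f_k(x)\) with \(f_k\) a homogeneous polynomial of degree \(k\), and \(\hat h=\mathrm{Id}+\sum_{k\ge 2}h_k\), equating degree-\(k\) parts inductively gives an equation of the form \(A h_k-h_k\circ A=F_k\) where \(F_k\) depends only on the lower-order data. The cohomological obstruction to solving this in the space of degree-\(k\) vector polynomials is precisely a resonance \(\lambda_i=\lambda_1^{m_1}\cdots\lambda_n^{m_n}\) with \(|m|=k\); the non\-resonance hypothesis makes each such equation uniquely solvable. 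Borel's lemma realizes \(\hat h\) as a genuine \(C^\infty\) germ \(H_0\) at \(0\).

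Stage 2 (flat correction). Set \(g:=H_0\circ f\circ H_0^{-1}\), so \(g-A\) is flat at \(0\), i.e., \(g(x)=Ax+O(\|x\|^N)\) for every \(N\). On a sufficiently small forward-invariant neighborhood of \(0\) define
\[
H_n:=A^{-n}\circ g^n, \qquad H_{n+1}-H_n=A^{-(n+1)}\bigl((g-A)\circ g^n\bigr).
\]
Because \(A\) is a hyperbolic contraction, \(\|g^n(x)\|\lesssim \mu^n\|x\|\) for some \(\mu<1\), and flatness of \(g-A\) gives \(\|(g-A)(y)\|\lesssim \|y\|^N\) for every \(N\); taking \(N\) large enough to dominate the operator-norm growth \(\|A^{-(n+1)}\|\) yields summable estimates, and the same argument applied to derivatives (with the usual Faà di Bruno bookkeeping) gives \(C^k\)-Cauchy estimates for every \(k\). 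Hence \(H_n\to H_\infty\) in \(C^\infty\) on a neighborhood of \(0\), with \(H_\infty\) a diffeomorphism tangent to the identity, and \(A\circ H_\infty=H_\infty\circ g\) by construction. The composition \(H:=H_\infty\circ H_0\) is the desired \(C^\infty\) linearization.

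The main obstacle is the convergence analysis in Stage 2, where one must balance three effects: the geometric decay of \(g^n(x)\) to \(0\), the operator-norm expansion of \(A^{-n}\) that acts destructively on high derivatives of \(H_n\), and the infinite-order flatness of \(g-A\). Flatness is the essential ingredient that lets us choose, for each target derivative order \(k\), a vanishing order \(N=N(k)\) large enough to win over the derivative-expansion factors in the chain rule. This is the classical Sternberg mechanism transcribed from flows (Theorem \ref{THMsternberg}) to maps, and with non\-resonance handling Stage 1 cleanly, the two stages combine to give Theorem \ref{THMSternbergMapsLinearization}.
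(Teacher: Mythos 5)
Your proposal is a correct and complete proof, but it takes a genuinely different route from the paper's treatment: the paper does not actually prove Theorem \ref{THMSternbergMapsLinearization} but presents it as a ``local restatement'' of Corollary \ref{CORsternbergflows} (the flow version derived from Theorem \ref{THMsternberg}), essentially citing the classical Sternberg/Sell literature. You are right to flag that this restatement is not automatic: a diffeomorphism whose linear part has negative (pairs of) eigenvalues need not be the time-one map of any $C^\infty$ vector field, so the flow version does not formally imply the map version, and a direct proof for maps is the honest route. Your two stages are exactly the standard contraction-case Sternberg argument: Stage 1 solves the homological equation $Ah_k - h_k\circ A = F_k$ order-by-order (the operator has spectrum $\{\lambda_i - \lambda^m\}$, nonzero by nonresonance) and realizes the formal conjugacy via Borel's lemma; Stage 2 converges $H_n = A^{-n}\circ g^n$ in $C^k$ on a fixed forward-invariant neighborhood, using that the geometric gain from flatness, $\|(g-A)\circ g^n\|_{C^k}\lesssim \mu^{nN}$, beats the loss $\|A^{-(n+1)}\|\lesssim \mu_0^{-(n+1)}$ once $N$ is chosen large relative to $\log\mu_0/\log\mu$. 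An incidental advantage of your route is that the flat-correction step produces a $C^\infty$ conjugacy on a \emph{fixed} neighborhood of the origin, sidestepping the neighborhood-shrinking issue that arises if one tries to pass from the finite-order Sternberg statement (Theorem \ref{THMsternberg}) to $C^\infty$ by letting $Q\to\infty$. In short: the paper cites, you prove, and your proof is sound.
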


Next, the nonresonance assumption implies that the centralizer of a nonresonant linear system consists of linear maps \cite[Theorem 10.1.14]{FH}, and as a consequence, the smooth linearization from Theorem \ref{THMSternbergMapsLinearization} for the stable manifold of a sink simultaneously smoothly linearizes any smooth map in the centralizer.

\begin{theorem}\label{THMsimultaneouslinearization}
Let $A\colon\R^n\to \R^n$ be a non\-resonant stable hyperbolic matrix and $\Phi_A$ the linear flow generated by $A$.  If\/ $g$ is a $C^{\infty}$ homeomorphism such that $g\varphi^t_A=\varphi^t_A g$ for all\/ $t\in\R$, then $g$ is linear.
\end{theorem}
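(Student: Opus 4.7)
The plan is to show that $g$ coincides with its derivative at the origin. Since $g$ commutes with $\varphi^t_A=e^{tA}$ and $0$ is the unique fixed point of each $e^{tA}$ ($t\neq 0$) by stable hyperbolicity, we get $g(0)=0$. Setting $L\dfn Dg(0)$ and differentiating the commutation relation $g\circ e^{tA}=e^{tA}\circ g$ at the origin yields $L\,e^{tA}=e^{tA}\,L$ for every $t$, so $L$ is a linear map commuting with $A$. Writing $g(x)=Lx+R(x)$ with $R(0)=0$ and $DR(0)=0$, the fact that $L$ also commutes with the flow shows that the remainder satisfies the equivariance
\[
R(e^{tA}x)=e^{tA}R(x)\qquad\text{for all }x\in\R^n,\ t\in\R.
\]

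The first main step is to show that $R$ is $C^\infty$-flat at the origin. Working in coordinates putting $A$ in Jordan form, expand $R$ in its formal Taylor series and let $R_k$ denote its $k$-th homogeneous component, $k\ge 2$. Substituting into the equivariance relation, $R_k$ lies in the kernel of the homological operator $P\mapsto e^{-tA}\!\circ P\circ e^{tA}-P$ acting on vector-valued degree-$k$ homogeneous polynomials. The spectrum of this operator on that finite-dimensional space consists of numbers $e^{\gamma(\lambda_i,m)t}-1$ with $|m|=k$ and $\lambda_i\in\Sigma(A)$. The nonresonance hypothesis $\mathrm{Re}\,\gamma(\lambda_i,m)\neq 0$ guarantees that for any fixed $t\neq 0$ none of these eigenvalues equals $0$, so the operator is invertible and $R_k\equiv 0$. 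Iterating over all $k\ge 2$ shows $R$ is flat at $0$; this linear-algebraic fact is precisely the content of \cite[Theorem 10.1.14]{FH}.

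The final step is to use the contraction $e^{tA}$ to promote flatness to global vanishing. Let $\beta_{\min}\dfn\min_\lambda|\mathrm{Re}\,\lambda|>0$ and $\beta_{\max}\dfn\max_\lambda|\mathrm{Re}\,\lambda|$. For any $x\in\R^n$ and $t\ge 0$ the equivariance rewrites as $R(x)=e^{-tA}R(e^{tA}x)$, and flatness of $R$ at $0$ together with the standard estimates $\|e^{tA}x\|\le Ce^{-\beta_{\min}t}\|x\|$ and $\|e^{-tA}\|\le Ce^{\beta_{\max}t}$ yields
\[
\|R(x)\|\le C_N\,e^{(\beta_{\max}-N\beta_{\min})t}\|x\|^N
\]
for every $N\ge 1$ and every sufficiently large $t$. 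Choosing $N$ with $N\beta_{\min}>\beta_{\max}$ and letting $t\to\infty$ forces $R(x)=0$; since $x$ was arbitrary, $g=L$ is linear.

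The main obstacle is the bookkeeping in the formal step when $A$ has nontrivial Jordan blocks: the homological operator is no longer diagonal, but it remains upper-triangular in a basis adapted to the Jordan filtration, so its eigenvalues on each graded piece are unchanged and nonresonance still forces invertibility. The rest is a routine comparison of exponential rates, and the hypothesis that $g$ is merely a $C^\infty$ homeomorphism (not a $C^\infty$ diffeomorphism) plays no role beyond providing enough regularity to take Taylor series.
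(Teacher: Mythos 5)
Your proof is correct, and it is essentially the standard argument. The paper does not prove this theorem in the text: it delegates it to \cite[Theorem~10.1.14]{FH} (``the centralizer of a nonresonant linear system consists of linear maps''), so there is no in-paper proof to compare against; your write-up is a clean reconstruction of what that reference establishes.

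Two small remarks. First, the eigenvalues of the homological operator $P \mapsto e^{-tA}\circ P\circ e^{tA}-P$ are $e^{-t\gamma(\lambda_j,m)}-1$ rather than $e^{t\gamma(\lambda_j,m)}-1$ in the paper's sign convention $\gamma(\lambda,m)=\lambda-\langle m,\lambda\rangle$; this does not affect invertibility, since $e^{z}=1$ forces $\mathrm{Re}\,z=0$ either way. (If you prefer, differentiating the kernel condition at $t=0$ turns it into the kernel of the single Lie-bracket operator with eigenvalues $-\gamma(\lambda_j,m)$, which are nonzero by nonresonance without needing to take real parts.) Second, in the contraction step the estimates $\|e^{tA}x\|\le Ce^{-\beta_{\min}t}\|x\|$ and $\|e^{-tA}\|\le Ce^{\beta_{\max}t}$ should carry the usual $\epsilon$-margins when $A$ has nontrivial Jordan blocks; since the relevant inequality $N\beta_{\min}>\beta_{\max}$ is open in $N$, this is harmless, but it is worth saying so given that you explicitly address the Jordan-block case elsewhere. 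With these cosmetic adjustments the argument is complete and, as you note, never uses invertibility of $g$ — only $C^\infty$ regularity.
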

These results imply that if we consider either a hyperbolic sink or source, or a periodic point for a hyperbolic attractor of a $C^\infty$ flow, then not only is the stable manifold of such a point linearizable, but any element of the centralizer is simultaneously linearized.

\begin{proof}[Proof of Theorem \ref{THMCentralizerAttractor}]
We first suppose that $p$ is a non\-resonant stable hyperbolic fixed point for \(\Phi\).  Then there exists a neighborhood $U$ of\/ $p$ and $\Phi\rest{U}$ is linear with the appropriate smooth coordinate system by Corollary \ref{CORsternbergflows}.  Let $f=f_1\circ f_2^{-1}$.  Then $f$ is a $C^\infty$ homeomorphism of \(M\) and $f\varphi^t=\varphi^t f$ for all\/ $t\in\mathbb{R}$.  Hence, $f\rest{U}$ is linear in the same smooth coordinate system by Theorem \ref{THMsimultaneouslinearization}.

There exists some $t>0$ such that $\varphi^t(V)\cap U$ is an open set.  By hypothesis, $f\rest{\varphi^t(V)}$ is the identity.  So in the local coordinate system $f$ is a linear  diffeomorphism on $U$ that is the identity on a nonempty open subset of\/ $U$.  Hence, $f$ is the identity on $U$.  Now for any $y\in W^{s}(p)$ there exists some $t$ such that $\varphi^t(y)\in U$.  Then $f(y)=(\varphi^{-t}f\varphi^t)(y)=y$ and $f\rest{W^{s}(p)}$ is the identity.  Hence, $f_1=f_2$ on $W^{s}(p)$.

More generally, we let $p$ be a non\-resonant hyperbolic periodic point contained in $\Lambda$.  Since $W^{cs}(\mathcal{O}(p))$ is dense in $W^s(\Lambda)$ by the In-Phase Theorem \ref{THMInPhase} and by the Spectral Decomposition Theorem, there exists some $T_0>0$ such that $W^{s}(\varphi^{T_0}(p))\cap V$ contains an open set in $W^{s}(\varphi^{T_0}(p))$.  Let $\pi(p)$ be the period of\/ $p$.  As above we define $f=f_1\circ f_2^{-1}$.  Then $f$ is a $C^\infty$ homeomorphism of \(M\) and $f\varphi^t=\varphi^t f$ for all\/ $t\in\mathbb{R}$.

By Theorem \ref{THMSternbergMapsLinearization} there exists a neighborhood $U$ of\/ $\varphi^{T_0}(p)$ in $W^{s}(\varphi^{T_0}(p))$ and a smooth coordinate system such that $\varphi^{\pi(p)}$ is linear on $U$.  Then there is some $n\in\mathbb{N}$ such that $\varphi^{n\pi(p)}(V)$ contains an open set in $U$.  Then, as above, $f$ is the identity in $U$.  Hence, $f$ is the identity on $W^{s}(\varphi^{T_0})(p)$.

Now for $y\in W^{s}(\mathcal{O}(p))$ there exists some $t$ such that $\varphi^t(y)\in W^{s}(\varphi^{T_0}(p))$.  Then $f(y)=(\varphi^{-t}f\varphi^t)(y)=y$.  Since $W^{s}(\mathcal{O}(p))$ is dense in $W^s(\Lambda)$ and $f$ is the identity on $W^{s}(\mathcal{O}(p))$ we see that $f$ is the identity on $W^s(\Lambda)$ so $f_1=f_2$ on $W^s(\Lambda)$.
\end{proof}

\subsection{Lie group of commuting matrices}\label{ss.liegroup}
By simultaneous linearization we have reduced the problem to an algebraic one.
We now define the Lie group of matrices that commute with a linear contraction.  In the case where $p$ is a periodic point the linearization is a linear contracting map and when $p$ is fixed there will be a linear contracting flow.


The parametric version of Sternberg's linearization \cite{Anderson} implies that the linearization depends continuously on the flow. (While stated for discrete time, the result holds for flows either by adapting the proof or by using a standard argument to show that the linearizing diffeomorphism for the time-1 map linearizes the flow \cite[Proof of Theorem 5.6.1]{FH}.)
\begin{remark}\label{REMAvsPsi}
For the open and dense set $\mathcal{V}\subset\mathcal{A}^\infty(M)$ from
Theorem \ref{THMopensetscentralizers}, $\Phi\in \mathcal{V}$, and the nonresonant fixed or periodic point $p=p(\Phi)$ 
whose orbit is fixed by any commuting diffeomorphism (Remark \ref{REMPerturbToFixOrbits}),
we have an embedding $\mathcal{E}(p,\Phi)\colon\R^n\to M$ with:
\begin{enumerate}
\item $\mathcal{E}(p,\Phi)(\R^n)=W^s(p,\Phi)=W^s(p)$;
\item $\Phi\mapsto\mathcal{E}(p,\Phi)$ is continuous (where we use the continuation of\/ $p$);\footnote{\(\Phi'\) near \(\Phi\) has a unique hyperbolic \(p'\) near \(p\) which varies continuously with \(\Phi'\).} and
\item\label{itemAvsPsi}for each connected component $\mathcal W$ of\/ $\mathcal V$ there exist $r, s\in \mathbb{N}$ with $r+2s=n$, coordinates $x_1,..., x_{r+s}\in \R^r\times \mathbb{C}^s$, and a continuous map $\lambda=(\lambda_1,..., \lambda_{r+s})\colon\mathcal{W}\to (\R^*)^r\times (\mathbb{C}^*\smallsetminus\R^*)^s$ such that
\begin{itemize}
\item if\/ $p$ is a hyperbolic fixed point, then $\Psi(p,\Phi)\dfn\mathcal{E}(p,\Phi)^{-1}\circ \varphi^t\rest{W^s(p)}\circ \mathcal{E}(p,\Phi)$ is a linear contracting flow on $\R^n$ that is diagonalized in the $x_i$-coordinates with eigenvalues $\lambda_i$ and depends continuously on $\Phi\in\mathcal{W}$;
\item if\/ $p$ is a hyperbolic periodic point with period $\pi(p)$, then $A(p,\Phi)\dfn\mathcal{E}(p,\Phi)^{-1}\circ \varphi^{\pi(p)}\rest{W^{s}(p)}\circ \mathcal{E}(p,\Phi)$
is a linear contracting map on $\R^n$ that is diagonalized in the $x_i$-coordinates with eigenvalues $\lambda_i$ and depends continuously on $\Phi\in\mathcal{W}$.
\end{itemize}
\end{enumerate}
\end{remark}

We now consider elements of the centralizer for these linearized maps. If\/ $h\in \mathrm{Diff}^\infty(\R^n)$ commutes with a linear nonresonant map, then $h$ is linear and furthermore diagonal with respect to the coordinates described above \cite{Kopell}.  We fix a neighborhood $\mathscr{V}$ of the matrix (either the matrix for the linearized map $A(p,\Phi)$ or the matrix for the linearized flow $\Psi(p,\Phi)$) such that for any $B\in\mathscr{V}$ the sign of the real and imaginary parts of the eigenvalues for $B$ agree with those of this matrix.

\begin{remark}[Linear centralizer group]\label{REMlinearcentralizergroup}
The set of invertible diagonal matrices that commute with an invertible diagonal matrix $A$
is an abelian group isomorphic to the disconnected Lie group $Z\dfn Z_{r,s}\dfn\R^{r+s}\times(\mathbb{Z}/2\mathbb{Z})^r\times(S^1)^s$ \cite{PY89}. The cyclic subgroup \(\langle \epsilon\rangle\)
generated by any
\[
\epsilon\in
\big\{(\theta_1,\dots,\theta_{r+s},\epsilon_1,\dots,\epsilon_{r+s})\in Z_{r,s}\ST\theta_i=1\ \forall i,\ \epsilon_j=1\ \forall j>r\big\}\sim(\Z/2\Z)^r
\]
is discrete in \(Z\), \[Z_0\dfn Z_{r,s,\epsilon}\dfn  Z_{r,s}/\langle\epsilon\rangle\] is a disconnected abelian Lie group, and \[Z_1\dfn Z'_{r,s,\epsilon}\dfn\ker\chi /\langle\epsilon\rangle\]
is the maximal compact subgroup of \(Z_0\),
where the surjective homomorphism $\chi$ from $Z_{r,s}$ to the hyperplane $\Sigma$ in ${\mathbb R}^{r+s}$ determined by $\sum_{i=1}^{r+s}\theta'_i = 0$, is defined by
\[\chi(\theta_1,\dots,\theta_{r+s},\epsilon_1,\dots,\epsilon_{r+s}) = (\theta_1-\theta_{\rm ave},\dots,\theta_{r+s}-\theta_{\rm ave})=(\theta'_1, \dots, \theta'_{r+s}).\]
Here, $\theta_{\rm ave}$ is the average value of\/ $\theta_1,\dots,\theta_{r+s}$.


For any $B\in{\mathscr V}$ (with ${\mathscr V}$ as defined after Remark \ref{REMAvsPsi} with \(r,s,\epsilon\) constant), the abstract group \(Z\) is naturally isomorphic to the centralizer \(Z(B)\)of \(B\), \(Z_0\) is isomorphic to \(Z(B)/\langle B\rangle\)
where \(\langle B\rangle\) is the cyclic group generated by \(B\), $Z_1$ represents rescalings\footnote{plus possibly some rotations (complex part) and flips (real part)} of \(\Phi\) (as opposed to rescalings of time) on \(W^s(\Oc(p))\) modulo period-maps,
and \(Z_0/Z_1\simeq\R^{r+s-1}\).
\end{remark}
\begin{remark}[Fundamental domain, orbit space]\label{REMFundDomSpOrbits}
As in \cite[Section 3.5, p.\ 87]{PY89},
we define \emph{fundamental domains} \(F\),
the \emph{spaces $S_A$ and $S_B$ of orbits} of\/ $A$ and $B$ in $\mathscr{V}$, and a canonical diffeomorphism of\/ $S_B$ onto $S_A$.  In continuous time the fundamental domains are homeomorphic to $S^{n-1}$, and in discrete time the sets are annuli.
\end{remark}
Each attractor (or repeller)
of\/ $\Phi\in\mathcal{U}_0$ 
(Remark \ref{REMPerturbToFixOrbits}) contains a fixed or periodic point $p$ such that if\/ $g\in\mathcal{C}^\infty(\Phi)$, then $g(\mathcal{O}(p))=\mathcal{O}(p)$.  Furthermore, each fixed or periodic point $p$ of \(\Phi\in\mathcal{V}\subset\mathcal{U}\) as in Theorem \ref{prop.perturbationconnecting} is nonresonant.

If\/ $p$ is $\pi(p)$-periodic, then 
there is a unique $\tau\in[0,\pi(p))$ such that $g'\dfn g\circ \varphi^\tau$ is the identity on $\mathcal{O}(p)$, and
\[
g'(W^s(p))=W^s(p), \quad g'(W^u(p))=W^u(p).
\]
Since $g'$ restricted to $W^s(p)$ is smooth, there is a linearization of\/ $g'$ restricted to $W^s(p)$, and we denote the corresponding element in $Z_0$ by $\bar{g}$.

Furthermore, if\/ $g'\rest{W^s(p)}=\mathrm{Id}$, then $g'\rest{W^s(\mathcal{O}(p))}=\mathrm{Id}$ and so $g'$ is the identity on an open set, $W^s(\mathcal{O}(p))$ is open,
and hence on $M$ (Theorem \ref{THMopensetscentralizers}), i.e.,  $g=\varphi^{-\tau}$.  So to complete our proof of Theorems  \ref{t.periodictrivial} and \ref{t.residualtrivial} we will perturb \(\Phi\) in a way that forces $g'\rest{W^s(p)}=\mathrm{Id}$.

\subsection{Perturbations near attractors}\label{SBSNoCompactPart}

We now adapt the perturbation techniques from \cite{PY89} to continuous time. We use the structural stability of hyperbolic attractors to make perturbations such that there is an open and dense set (or residual set depending on the situation) of flows such that no nontrivial element in the group $Z_0$ is in the centralizer for the perturbed system.
Lemma \ref{LEMpowerofflow} below shows how this implies these flows have a trivial centralizer.

In the proof of Theorem \ref{t.periodictrivial}, we use the attractor or repeller that is a fixed point or single periodic orbit and perturb the flow to first obtain triviality of the centralizer
 on the basin of attraction.  As described above this then extends to triviality of the centralizer on the entire manifold.   
 
 For elements in $Z_1$ the orbit of a point consists of points in a discrete group of invariant tori and this is the reason that $Z_1$ is referred to as the compact part; whereas, in $Z_0\smallsetminus Z_1$ we have orbits that tend toward infinity and approach certain eigendirections.  We then will work with each of these separately as the orbits have very different behavior.
 
One place where the different behavior of the orbits is seen is in the orbit space defined in Remark \ref{REMFundDomSpOrbits}. The proof of Theorem \ref{t.periodictrivial} uses the orbit space and carries out different perturbations for elements in $Z_1$ and those in $Z_0\smallsetminus Z_1$.
The first step is to show that after a perturbation there is no nontrivial element of the compact part, $Z_1$, that commutes with the linearized system.  We then perturb further so that no element from the noncompact part\rlap, $Z_0\smallsetminus Z_1$,  is in the centralizer for the linearized system.  The next definitions and comments further demonstrate why we divide the compact and noncompact components into different sections.

For a diagonal matrix $B\in {\mathscr V}$ whose diagonal entries are the eigenvalues $(\lambda_1,..., \lambda_{r+s})$ and a diagonal matrix $D$ whose diagonal elements are $(\mu_1,..., \mu_{r+s})$ we let, as in Remark \ref{REMlinearcentralizergroup},
$$\theta_i=\frac{\ln |\mu_i|}{\ln| \lambda_i|} \textrm{, }
\theta_i'=\theta_i-\frac{1}{ r+s}\sum_{j=1}^{r+s}\theta_j
\textrm{, and }
\epsilon_i=\frac{\mu_i}{\exp \theta_i \rho_i(B)},
$$
where $\exp \rho_i(B)=|\lambda_i|$ for $1\leq i\leq r$, and $\exp \rho_i(B)= \lambda_i$ for $r<i\leq r+s$.  We then have an isomorphism from diagonal matrices onto $Z_{r,s}$ given by $\Theta_B(D)=(\theta_1,..., \theta_{r+s}, \epsilon_1,..., \epsilon_{r+s})$.

For \(i\in\{1,..., r+s\}\) let
$$W_i\dfn\bigoplus_{j\in\{1,..., r+s\}}\begin{cases}\{0\}&j\neq i,\\\R&i=j.\end{cases}$$
For $\Gamma\subset\{1,..., r+s\}$, let \(W_\Gamma\dfn\bigoplus_{i\in \Gamma}W_i\), \(\Gamma^c\dfn\{1,..., r+s\}\smallsetminus \Gamma\),
and $\pi_\Gamma$ the projection with kernel\/ $W_{\Gamma^c}$ and image $W_\Gamma$.  For the map $B$ we let $\tilde{\pi}_\Gamma^B$ be the induced smooth map from $S_B\smallsetminus\widetilde W_{\Gamma^c}$ to $\widetilde W_\Gamma$.  Then the map $\tilde{\pi}_\Gamma^B$ commutes with the action of\/ $Z_0$ on $S_B$ \cite[Section 3.6]{PY89}, and $\widetilde W=S_B \smallsetminus \bigcup_\Gamma\widetilde W_\Gamma$,
where the union is over all proper subsets $\Gamma$ of\/ $\{1,\dots,r+s\}$.


In \cite[Section 3.3]{PY89} it is shown that $\Gamma(D)\dfn\big\{ i\in \{1,..., r+s\}\ST\theta_i'=\min \theta_j'\big\}$ satisfies
$\Gamma(D)=\{1,\dots,r+s\}$ if and only if\/ $D\in Z_1$. Thus, if\/ $D\in Z_0\smallsetminus Z_1$, then $\Gamma(D)$ is a proper nonempty subset of\/ $\{1,..., r+s\}$, so the projection $\pi_{\Gamma(D)}$ has nontrivial kernel, but also does not map all points to the origin. By contrast, in the compact case the projection $\pi_{\Gamma(D)}$ is the identity map, and this is not useful in the arguments that follow. This is one reason we deal with the compact case separately.

The proof of Theorem \ref{t.residualtrivial} where $\dim(M)=3$ is also split into two components.  The compact part is handled by modifying some of the arguments for the proof of Theorem \ref{t.periodictrivial}.  The noncompact part is handled very easily by the low dimensionality.  These combine to give us an open and dense set of flows with trivial centralizer.

 The proof of Theorem \ref{t.residualtrivial}, in arbitrary dimension, uses an attractor or repeller that is not a fixed point or single periodic orbit.  In fact, the construction can be done for any flow that has such an attractor or repeller, but in higher dimension we only obtain a residual set of flows with trivial centralizer.  The argument uses the homoclinic points of a periodic point contained in the attractor or repeller.  (Note that for an attractor or repeller that consists of a fixed point or single periodic orbit that there are no homoclinic points.)  Using the homoclinic points we do not need to separate the argument into the compact and noncompact part, since we do not need to use the orbit space
 to complete the proof.  The proof of Theorem \ref{t.residualtrivial}
 is therefore appended to the subsection with the considerations of the noncompact part.  The next lemma shows that the arguments described indeed prove Theorems \ref{t.periodictrivial} and \ref{t.residualtrivial}.
\begin{lemma}\label{LEMpowerofflow}
For $g\in \mathcal{C}^\infty(\Phi)$ we have $g=\varphi^t$ for some $t\in\R$ if and only if\/ $\bar g=1_{Z_0}$
\end{lemma}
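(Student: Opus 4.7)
The plan is to unpack what $\bar{g}$ encodes via smooth linearization and then apply the rigidity theorem (Theorem \ref{THMopensetscentralizers}) to globalize. The key algebraic fact, which follows from a direct computation using the formulas defining $\Theta_B$, is that $\Theta_B(B)=\epsilon$: for $D=B$ one has $\theta_i=\ln|\lambda_i|/\ln|\lambda_i|=1$ and $\epsilon_i=\lambda_i/\exp(\rho_i(B))\in\{\pm1\}$ (real part) or $=1$ (complex part). Thus $\langle\epsilon\rangle\subset Z_{r,s}$ corresponds under $\Theta_B$ exactly to $\langle B\rangle\subset Z(B)$. I will treat the periodic case explicitly; the fixed-point case runs in parallel with the cyclic subgroup replaced by the one-parameter group of time-$t$ maps of the linearized flow.

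For the forward direction, assume $g=\varphi^t$. Since $\varphi^t$ preserves $\Oc(p)$, the unique $\tau\in[0,\pi(p))$ for which $g'=g\circ\varphi^\tau=\varphi^{t+\tau}$ fixes $\Oc(p)$ pointwise is determined by $t+\tau\in\pi(p)\Z$, say $t+\tau=n\pi(p)$. Then $g'\rest{W^s(p)}=\varphi^{n\pi(p)}\rest{W^s(p)}$ linearizes to $B^n=A(p,\Phi)^n$, whose class in $Z_0=Z(B)/\langle B\rangle$ is trivial, so $\bar{g}=1_{Z_0}$.

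For the converse, assume $\bar{g}=1_{Z_0}$. By Theorem \ref{THMsimultaneouslinearization}, $g'\rest{W^s(p)}$ is simultaneously linearized with $\varphi^{\pi(p)}\rest{W^s(p)}$ and becomes a matrix in $Z(B)$; triviality in $Z_0$ forces this matrix to lie in $\langle B\rangle$, i.e., to equal $B^n$ for some $n\in\Z$. Pulling back through the linearizing conjugacy, $g'\rest{W^s(p)}=\varphi^{n\pi(p)}\rest{W^s(p)}$. Setting $h\dfn g'\circ\varphi^{-n\pi(p)}$, we obtain $h\in\mathcal{C}^\infty(\Phi)$ with $h\rest{W^s(p)}=\mathrm{Id}$; since $h$ commutes with $\Phi$, it is the identity on the flow-saturation $W^s(\Oc(p))=W^s(\Lambda)$, which is open in $M$ because $\Lambda$ is an attractor. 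Theorem \ref{THMopensetscentralizers} promotes this to $h=\mathrm{Id}_M$, giving $g=h\circ\varphi^{n\pi(p)-\tau}=\varphi^{n\pi(p)-\tau}$, a time-$t$ map of $\Phi$.

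The main subtlety I expect lies in the algebraic bookkeeping to verify $\langle\epsilon\rangle\leftrightarrow\langle B\rangle$ under $\Theta_B$ in both the periodic and fixed-point cases (in the fixed-point case $B$ plays the role of the generator of the linear contracting flow, so the quotient is by the one-parameter subgroup rather than a cyclic one). Once that correspondence is settled, the two directions are essentially formal: forward is a direct computation in the linearized model, and converse combines smooth linearization (Theorems \ref{THMsternberg}--\ref{THMsimultaneouslinearization}) with the rigidity theorem (Theorem \ref{THMopensetscentralizers}) to promote identity on an open subset of the basin to identity on all of $M$.
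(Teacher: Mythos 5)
Your overall route matches the paper's: linearize $g'\rest{W^s(p)}$, use triviality in $Z_0$ to see $g'\rest{W^s(p)}$ is a time-$t$ map of $\Phi$, and then globalize via Theorem~\ref{THMopensetscentralizers}. The forward direction and the algebraic bookkeeping identifying $\langle B\rangle$ with the kernel of $Z(B)\to Z_0$ are fine. However, there is a genuine gap in the converse: you write $W^s(\Oc(p))=W^s(\Lambda)$, but these are equal only when $\Lambda$ is itself the fixed point or periodic orbit $\Oc(p)$ (the setting of Theorem~\ref{t.periodictrivial}). For a nontrivial transitive hyperbolic attractor $\Lambda\ni p$, the set $W^s(\Oc(p))=W^{cs}(p)$ is a codimension-$\dim E^u$ injectively immersed submanifold of $W^s(\Lambda)$; it is dense in $W^s(\Lambda)$ (In-Phase and Spectral Decomposition) but has empty interior, so it is neither equal to $W^s(\Lambda)$ nor open. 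Since Lemma~\ref{LEMpowerofflow} is also invoked in the proof of Theorem~\ref{t.residualtrivial}, where the attractors are assumed \emph{not} to be single orbits, this case cannot be elided.

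The fix is exactly the intermediate step the paper supplies via Theorem~\ref{THMCentralizerAttractor}: once you know $h\rest{W^s(\Oc(p))}=\mathrm{Id}$, pass to $h\rest{W^s(\Lambda)}=\mathrm{Id}$ using density of $W^s(\Oc(p))$ in $W^s(\Lambda)$ and continuity of $h$ (this is precisely what the proof of Theorem~\ref{THMCentralizerAttractor} does). Only then is $h$ the identity on an open set, and Theorem~\ref{THMopensetscentralizers} globalizes. With that one line inserted your argument is correct and is essentially the paper's.
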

\begin{proof}
If\/ $g=\varphi^t$ for some $t\in\R$ then $\bar g= 1_{Z_0}$ by definition of\/ $Z_0$.  Suppose that $\bar g=1_{Z_0}$.  Then by definition of\/ $Z_0$ we have
$g'=g\varphi^\tau=\varphi^s$ for some $s\in\R$ on $W^s(\mathcal{O}(p))$, so $g=\varphi^{s-\tau}$ on $W^s(\mathcal{O}(p))$.  By Theorems \ref{THMCentralizerAttractor} and \ref{THMopensetscentralizers} this implies that $g=\varphi^{s-\tau}$ on $M$.
\end{proof}

We now proceed to make the perturbations so each $g\in \mathcal{C}^\infty(\Phi)$ satisfies $\bar g= 1_{Z_0}$.

\subsubsection{Compact part of the centralizer}\label{SSnocompact}
In the proof of Theorem \ref{t.periodictrivial} the perturbation for the compact part of the centralizer, nontrivial elements in $Z_1$, is different than for the noncompact part as we described above.  
In this section we perturb the flow so that no nontrivial element in $Z_1$ commutes with it, and we show that the set of flows we obtain is open and dense in \(\mathcal{A}^\infty_1(M)\).
We will be working in the orbit space and examine the effect of the perturbations on the elements in the orbit space.\

We now show how to perturb the flows in the case of Theorem \ref{t.periodictrivial} to obtain an open and dense set of flows whose centralizer has no compact part.
Take $p\in M$ either a fixed hyperbolic sink or a periodic hyperbolic sink. We treat the following 3 cases in parallel.  In each case we will examine certain ``exceptional sets" or ``exceptional properties" and perturb the vector field generating the flow so that no element $g$ in the centralizer can have $\bar g\in Z_1\smallsetminus \{1_{Z_1}\}$ by examining the ``exceptional sets" or ``exceptional properties."   
\begin{enumerate}[label=\bf Case \arabic*.]
\item\label{itemcases} The basin of\/ $p$ is not contained in the basin of a single repeller.
\item The basin of\/ $p$ is contained in the basin of fixed or periodic source.
\item The basin of\/ $p$ is contained in the basin of a single repeller, which is not fixed or periodic.
\end{enumerate}

{\bfseries In Case 1}, let $J(p, \Phi)$ be the complement of the center unstable manifolds of the repellers in $W^s(p)$.  The set $J(p, \Phi)$ is a nonempty, nowhere dense, closed, flow invariant set in $W^s(p)$.  For the linearization $A$ of $W^s(p)$ we denote the orbit space of $W^s(p)$ as described in Remark \ref{REMFundDomSpOrbits} as $S_A=S(p)$.  Then $J(p, \Phi)$ projects to a nonempty, nowhere, dense, closed \(\Phi\)-invariant set $\tilde{J}(p, \Phi)\subset S(p)$.
Since any $g\in \mathcal{C}^\infty(\Phi)$ fixes the repellers and the unstable set of a repeller, it leaves $J(p, \Phi)$ invariant.  Likewise, the action induced by $g$ on the space of orbits leaves $\tilde{J}(p, \Phi)$ invariant.  Below we use the set $\tilde{J}(p, \Phi)$ to find a perturbation of the flow in such a way that $\tilde{J}(p, \Phi)$ is not invariant for any element of\/ $Z_1\smallsetminus\{1_{Z_1}\}$.

{\bfseries In Case 2}, let \(q\) be the fixed or periodic source with $W^s(p)\smallsetminus\{p\}\subset W^u(\mathcal{O}(q))\smallsetminus\{\mathcal{O}(q)\}$, $S(p)$
the orbit space for the fundamental domain of $W^s(p)$ (Remark \ref{REMFundDomSpOrbits}), and $S(q)$ the orbit space for the fundamental domain for $W^u(q)$.  Then each point in $S(p)$ corresponds to a unique point in $S(q)$, and an element of the Lie group that commutes with the linearization on $W^u(q)$ induces an action on $S(q)$ and hence an action on $S(p)$.  Below we will perturb the flow in a neighborhood of a fundamental domain of\/ $S(p)$ so that the perturbation does not change $S(q)$ and so that that no nontrivial element in the compact part can be in the centralizer.

{\bfseries In Case 3}, there is a foliation of\/ $W^s(p)\smallsetminus\{p\}$ by center-unstable manifolds of the repeller that is invariant and preserved by any element in $\mathcal{C}^\infty(\Phi)$.  There is then an invariant foliation of\/ $S(p)$ given by the image of the foliation under the projection to the space of orbits.  We let $\mathcal{F}(p, \Phi)$ be the leaves of the center-unstable foliation and $\tilde{\mathcal{F}}(p, \Phi)$ be the foliation on the space of orbits. Below we perturb the flow so that no nontrivial element in the compact part leaves the foliation invariant.

\begin{definition}\label{DEFZ1p}
We define the elements of\/ $Z_1$ that correspond to elements in $\mathcal{C}^\infty(\Phi)$ as follows, according to the three different cases on page \pageref{itemcases}. With notations as in Remark \ref{REMAvsPsi}\eqref{itemAvsPsi} set
$$B\dfn\begin{cases*}A&if\/ $p$ or $q$ is periodic,\\\Psi&if\/ $p$ or $q$ is fixed.\end{cases*}$$
\begin{enumerate}[label=\bf Case \arabic*:]
\item Let \(\displaystyle
Z_1(p,\Phi)=\{\bar{g}\in Z_1(B(p,\Phi))\ST g(\tilde{{J}}(p, \Phi))\subset \tilde{{J}}(p, \Phi)\}
.\)
\item Let 
\[
\strut\hskip-3pt Z_1(p,\Phi)\dfn
\{(\bar g, \bar{g}')\in Z_1(B(p, \Phi))\times Z_1'(B(q, \Phi))\ST\bar{g},\bar{g}'\textrm{ induce identical actions on }S(p)\}
.\]
\item Let \(\displaystyle
Z_1(p, \Phi)=\{\bar{g}\in Z_1(B(p, \Phi))\ST\forall x\in S, T_x\bar{g}(T_x \tilde{\mathcal{F}}(p, \Phi))=T_{\bar{g}(x)}\tilde{\mathcal{F}}(p, \Phi)\}
.\)
\end{enumerate}
\end{definition}

In each case, $Z_1(p, \Phi)$ is a closed subgroup of\/ $Z_1$.  Furthermore, for any closed subgroup $Z_2$ of\/ $Z_1$ the set
$$
\mathcal{V}_{Z_2}\dfn\{\Phi\in \mathcal{V}\ST Z_1(p, \Phi)\subset Z_2\}
$$
is open in $\mathcal{V}$ \cite[Lemma, p.\ 94]{PY89}.

We perturb the flow $\Phi$ to a new flow $\Phi'$ such that the two flows only differ in the interior of a fundamental domain and such that
$\Phi'=\Phi$ in a neighborhood of\/ $p$.  
Since the two flows agree in a neighborhood of the boundary of the fundamental domain, they both induce an action on $S(p)$ that is a diffeomorphism if\/ $p$ is periodic and a smooth flow if\/ $p$ is fixed.

To obtain the needed perturbations, note that $W\dfn\{(x_i)\in \R^r \times \mathbb{C}^s\ST x_i\neq 0\, \forall\,  i\}$ projects to an open dense set $\widetilde W\subset S$ which is invariant under each element of\/ $Z_0$. Since the axes correspond to the eigendirections for the matrices, $W$ consists of vectors that are not eigenvectors, so \(W\) is the set of orbits in the orbit space that are not orbits for eigenvectors. The reason to work in \(\widetilde W\) is that we can ensure that after a perturbation that any element $g$ of the centralizer such that $\bar g\in Z_1(p, \Phi)$ 
is the identity in $Z_1$; otherwise, we may have the identity in only some of the coordinates and would possibly need to make a series of perturbations.

We now show how a symmetry can be robustly broken:
\begin{lemma}\label{LEMnowheredense}
If\/ $g\in Z_1\smallsetminus\{1_{Z_1}\}$, then $\big\{\Phi\in \mathcal{U}\ST g\in Z_1(p, \Phi)\big\}$ is nowhere dense in $\mathcal{U}$.
\end{lemma}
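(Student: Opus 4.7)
The strategy is to show that for any $\Phi \in \mathcal{U}$ and any $C^\infty$-neighborhood $\mathcal{N}$ of $\Phi$ in $\mathcal{U}$, there are $\Phi' \in \mathcal{N}$ and a neighborhood $\mathcal{N}'$ of $\Phi'$ with $g \notin Z_1(p, \Phi'')$ for every $\Phi'' \in \mathcal{N}'$. Because $g \neq 1_{Z_1}$, the action $\tilde g$ induced by $g$ on the orbit space $S(p)$ is nontrivial and preserves the open dense invariant subset $\widetilde W \subset S(p)$ of non-eigendirection orbits, so one may fix $\tilde x \in \widetilde W$ with $\tilde g(\tilde x) \neq \tilde x$ and a small neighborhood $U$ of $\tilde x$ in $\widetilde W$ with $U \cap \tilde g(U) = \emptyset$.

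First I would lift $U$ to a relatively compact open set $V$ inside a fundamental domain $F$ for $\Phi\rest{W^s(p)}$, and then perturb the vector field generating $\Phi$ inside a compactly supported neighborhood of $V$ contained in $\mathrm{int}(F)$ and disjoint from every lift of $\tilde g(U)$ into the $\Phi$-saturate of $F$. Since the support misses a neighborhood of $p$, the Sternberg linearization is unchanged, so $Z_1(B(p, \Phi')) = Z_1(B(p, \Phi))$ and the element $g$ has unambiguous meaning for the perturbed flow $\Phi'$; since the support also misses $\partial F$, the two flows agree outside the saturate of $F$, so the attractors, repellers, and their basins are unaffected away from $V$.

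Symmetry-breaking then proceeds case by case. In Case 1, I would deflect the unstable manifold of a repeller through $V$ so that a local sheet of the exceptional set $\tilde J(p, \Phi')$ through $\tilde x$ is displaced while the sheets through $\tilde g(\tilde x)$ are untouched, producing a point on the displaced sheet whose $\tilde g^{-1}$-image lies outside $\tilde J(p, \Phi')$ and so violating $\tilde g(\tilde J(p, \Phi')) \subset \tilde J(p, \Phi')$. In Case 2, the perturbation shifts the orbit identification $S(p) \to S(q)$ in a neighborhood of $\tilde x$ only, so that $\bar g$ and $\bar g'$ cease to induce identical actions on $S(p)$ there. In Case 3, I would tilt the tangent plane of the center-unstable leaf through $V$ so that $T_{\tilde x} \tilde{\mathcal F}(p, \Phi')$ is no longer the $T\tilde g^{-1}$-image of the unchanged plane $T_{\tilde g(\tilde x)} \tilde{\mathcal F}(p, \Phi')$. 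In each case, the failure is a $C^\infty$-open condition at $\Phi'$ (non-containment of a closed set through a chosen point; pointwise disagreement of two continuous actions; transversality of two tangent planes), which yields $\mathcal{N}'$ and hence the nowhere-density claim.

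The main obstacle will be Case 1, where one needs enough local freedom to push the relevant repeller's unstable sheet off $J(p, \Phi)$ through $V$ without accidentally replacing it by the sheet of another repeller passing over $V$; the hypothesis $g \neq 1_{Z_1}$ enters precisely here, since the disjointness $U \cap \tilde g(U) = \emptyset$ is what permits a perturbation localized on one side of the $\tilde g$-orbit of $\tilde x$ while leaving the structure at $\tilde g(\tilde x)$ intact. The remaining geometric construction parallels the discrete-time treatment in \cite[Section 3.7]{PY89}.
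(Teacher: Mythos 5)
Your proof takes essentially the same approach as the paper's: in each of the three cases, one perturbs the generating vector field near one point of a $\tilde g$-orbit in the orbit space $S(p)$, leaving the other point unchanged, to break the symmetry, and then notes the failure is a $C^\infty$-open condition. The case analysis and the identification of where $g\neq 1_{Z_1}$ enters (disjointness of $U$ and $\tilde g(U)$) match the paper exactly.

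Two small wrinkles worth flagging, both in your Case 1. First, the localization direction is swapped relative to the paper: you perturb near $\tilde x$ and protect $\tilde g(U)$, whereas the paper picks $x\in\tilde J(p,\Phi)\cap\widetilde W$ and perturbs near $gx$, explicitly removing the single point $gx$ from $\tilde J(p,\Phi')$ while leaving $x\in\tilde J(p,\Phi')$; both work, but the paper's is slightly cleaner because $\tilde J(p,\Phi)$ is merely a nowhere dense closed set rather than a submanifold, so ``displacing a local sheet'' should be read only as ``changing $\tilde J(p,\Phi')\cap U$ while keeping $\tilde J(p,\Phi')\cap\tilde g(U)=\tilde J(p,\Phi)\cap\tilde g(U)$.'' Second, you exhibit $y\in\tilde J'$ with $\tilde g^{-1}(y)\notin\tilde J'$ and claim this violates $\tilde g(\tilde J')\subset\tilde J'$; as stated it instead violates $\tilde g^{-1}(\tilde J')\subset\tilde J'$. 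This still gives $g\notin Z_1(p,\Phi')$ because $Z_1(p,\Phi')$ is a closed subgroup, hence closed under inverses, but the cleaner route matching your stated support (which only protects $\tilde g(U)$, not $\tilde g^{-1}(U)$) is to take $y\in(\tilde J'\smallsetminus\tilde J)\cap U$ and note $\tilde g(y)\in\tilde g(U)$ lies outside $\tilde J=\tilde J'$ there, so $\tilde g(\tilde J')\not\subset\tilde J'$ directly.
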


\begin{proof}
{\bfseries Case 1}. Since $\widetilde W$ is open and dense in $S$, we may assume (by possibly passing to a small perturbation) that
$\tilde{{J}}(p, \Phi)\cap \widetilde W\neq \emptyset$.  Let $x\in \tilde{{J}}(p, \Phi)\cap \widetilde W$ such that $gx$ is not in the image of the boundary of the fundamental domain in $S$.  Since $\tilde{{J}}(p, \Phi)$ is nowhere dense we can make a perturbation \(\Psi\) which is the identity in the neighborhood of\/ $x$, but supported in a neighborhood of\/ $gx$.  To do this we modify $\tilde{J}(p, \Phi)$ so that it no longer contains $gx$;  so $gx\notin \tilde{{J}}(p, \Phi)=\tilde{\Psi}(\tilde{{J}}(p, \Phi))$.
So $g\notin Z_1(A(p, \Phi))$ for $p$ periodic or $g\notin Z_1(\Psi(p, \Phi))$ for $p$ fixed.

{\bfseries Case 2}. We can ensure that the fundamental domains of\/ $W^s(p)$ and $W^u(q)$ are disjoint and perturb the flow $\Phi$ so the support intersects the fundamental domain for $W^s(p)$, but not that of $W^u(q)$.  Then for $g_1\in Z_1(B(p, \Phi))$ and $g_2\in Z_1(B(q, \Phi))$ where $(g_1, g_2)\neq(1_{Z_1(B(p, \Phi))}, 1_{Z_1(B(q, \Phi))})$, we have $({g}_1, {g}_2)\in Z_1(p, \Phi)$ if the induced actions on $S(p)$ are the same.  For a point $x\in S(p)$ we can use a small perturbation $\Psi$ arbitrarily close to the identity such that ${g}_2 (x)\neq \tilde{\Psi}g_1\tilde{\Psi}^{-1}(x)$.  Then $({g}_1, {g}_2)\notin Z_1(p, \Phi')$, and this is an open condition.

{\bfseries Case 3}. The proof is similar to the proof for Case 1.  We let $x\in \widetilde W$ such that $gx$ is not in the boundary of the image of the fundamental domain in $S$.  We now perturb the flow $\Phi$ to a new flow $\Phi'$ such that the two flows agree in a neighborhood of the boundary of the fundamental domain and
$$
T_{gx}\tilde{{J}}(p,\Phi')
\neq T_{gx}\tilde{{J}}(p, \Phi)=T_x g(T_x\tilde{{J}}(p, \Phi)).$$
So $g\notin Z_1(A(p, \Phi))$ for $p$ periodic or $g\notin Z_1(\Psi(p, \Phi))$ for $p$ fixed.
\end{proof}

We now show how Lemma \ref{LEMnowheredense} implies that there is an open and dense set of flows such that there is no compact part of the centralizer.
For a closed subgroup $Z_2$ of\/ $Z_1$ we let
\[\mathcal{U}_{Z_2}=\{ \Phi\,:\, Z_1(\Phi)\subset Z_2\}.\]
The set $\mathcal{U}_{\{1_{Z_1}\}}$ is open (by the discussion before Lemma \ref{LEMnowheredense}) and dense in $\mathcal{U}$: Let $O$
be open in $\mathcal{U}$.  Then there exists some $\Phi\in O$ such that $Z_1(p, \Phi)$ is minimal among the $Z_1(p, \Phi')$ for $\Phi'\in O$.  Then for $\Phi'$ near $\Phi$ in $O$ we have $Z_1(p, \Phi')\subset Z_1(p, \Phi)$ \cite[Lemma 5.2]{PY89}.  By minimality this implies that $Z_1(p, \Phi')=Z_1(p, \Phi)$.  Lemma \ref{LEMnowheredense} now implies that $Z_1(p, \Phi)=\{1_{Z_1}\}$.  So there is an open and dense $\mathcal{U}_0\subset \mathcal{A}_1^\infty(M)$ such that for $\Phi\in \mathcal{U}_0$ and $g\in \mathcal{C}^\infty(\Phi)$ where $\bar g\in Z_1(p, \Phi)$ we have $\bar g= 1_{Z_1}$.

\subsubsection{Noncompact part of the centralizer}\label{SBSnononcompact}
We now show how to perturb the flows to eliminate the noncompact part of the centralizer.  We define $Z_0(p, \Phi)$ as we did for $Z_1(p, \Phi)$ in the three different cases (Definition \ref{DEFZ1p}). 

Theorem \ref{t.periodictrivial} follows from the next proposition.

\begin{proposition}\label{PROPtrivial}
$Z_0(p, \Phi)$ is trivial for each $\Phi$ in an open dense set $\mathcal{V}_1\subset\mathcal{V}\subset \mathcal{A}_1^\infty$.
\end{proposition}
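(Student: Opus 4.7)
The plan is to mirror the strategy used for the compact part in Subsubsection \ref{SSnocompact}, adapted for elements of $Z_0\smallsetminus Z_1$ whose $Z_0$-orbits escape to infinity rather than lie on tori. The essential new tool is the family of projections $\tilde{\pi}_\Gamma^B\colon S_B\smallsetminus\widetilde{W}_{\Gamma^c}\to\widetilde{W}_\Gamma$ which, as noted in Section~\ref{SBSNoCompactPart}, commute with the $Z_0$-action on $S_B$. For any nontrivial $g\in Z_0\smallsetminus Z_1$ the set $\Gamma(g)\subsetneq\{1,\dots,r+s\}$ is a proper nonempty subset, so $\pi_{\Gamma(g)}$ has nontrivial kernel but is not identically zero, giving a direction in which $g$ exhibits genuine hyperbolic displacement in the orbit space.

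First I would define $Z_0(p,\Phi)$ verbatim as in Definition~\ref{DEFZ1p} with $Z_1$ replaced by $Z_0$ throughout: in each of the three cases this yields a closed subgroup of $Z_0$. The same openness statement as before works: for each closed subgroup $Z_2\le Z_0$ the set $\mathcal{V}_{Z_2}\dfn\{\Phi\in\mathcal{V}\ST Z_0(p,\Phi)\subset Z_2\}$ is open in $\mathcal{V}$, by continuous dependence of the Sternberg linearization on $\Phi$ (Remark~\ref{REMAvsPsi}) and the characterization of membership in each case.

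Second, I would prove the analog of Lemma~\ref{LEMnowheredense}: for every $g\in Z_0\smallsetminus\{1_{Z_0}\}$ the set $\{\Phi\in\mathcal{V}\ST g\in Z_0(p,\Phi)\}$ is nowhere dense in $\mathcal{V}$. One first passes to a small perturbation so that the object of interest ($\tilde{J}(p,\Phi)$ in Case~1, the matching of induced actions on $S(p)$ in Case~2, the foliation $\tilde{\mathcal{F}}(p,\Phi)$ in Case~3) meets $\widetilde{W}_{\Gamma(g)}$. One then picks $x\in\widetilde{W}_{\Gamma(g)}$ so that both $x$ and $gx$ lie in the interior of the chosen fundamental domain and away from the image of its boundary; this is possible because $\widetilde{W}$ is open and dense, and because $g$ acts on the orbit space with a hyperbolic component in the directions indexed by $\Gamma(g)$, so the full $g$-orbit of a generic $x$ meets the fundamental domain densely. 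A bump perturbation in the interior of the fundamental domain supported near $gx$ and equal to the identity near $x$ then displaces $\tilde{J}$ at $gx$ (Case~1), alters $\tilde{\Psi}g_1\tilde{\Psi}^{-1}$ versus $g_2$ at $x$ (Case~2), or tilts the tangent plane $T_{gx}\tilde{\mathcal{F}}$ (Case~3), exactly as in the compact proof; this places $\Phi'$ outside $\{g\in Z_0(p,\cdot)\}$, and the condition is open by continuous dependence.

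Third, I would run the minimality argument from the end of Section~\ref{SSnocompact}: on any nonempty open $O\subset\mathcal{V}$, pick $\Phi_0\in O$ with $Z_0(p,\Phi_0)$ minimal among $\{Z_0(p,\Phi')\ST\Phi'\in O\}$ in the closed-subgroup lattice. The openness statement above forces $Z_0(p,\Phi')\subset Z_0(p,\Phi_0)$ for nearby $\Phi'\in O$, and minimality then gives equality. Were $Z_0(p,\Phi_0)$ to contain a nontrivial $g$, the nowhere-density just established (applied inside $O$) would produce a nearby $\Phi'\in O$ with $g\notin Z_0(p,\Phi')$, a contradiction. Hence $Z_0(p,\Phi_0)=\{1_{Z_0}\}$, and the corresponding open set $\mathcal{V}_{\{1_{Z_0}\}}\cap O$ is nonempty; since $O$ was arbitrary, $\mathcal{V}_1\dfn\mathcal{V}_{\{1_{Z_0}\}}\cap\mathcal{U}_{\{1_{Z_1}\}}$ is open and dense in $\mathcal{V}$, and by construction $Z_0(p,\Phi)=\{1_{Z_0}\}$ for every $\Phi\in\mathcal{V}_1$.

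The main obstacle I anticipate is the fundamental-domain choice in Cases~1 and~3: when $g\in Z_0\smallsetminus Z_1$ has large noncompact part, a single $g$-orbit in $S$ may be forced to cross the boundary of the fundamental domain, and one must confirm that $x$ can be selected so that $gx$ is still in a region where a compactly supported perturbation of $\Phi$ has full freedom to move $\tilde{J}$ or tilt $\tilde{\mathcal{F}}$ without contradicting the collar constraints that force $\Phi$ and $\Phi'$ to agree near $p$ and near the boundary. The projection $\tilde{\pi}_{\Gamma(g)}^B$ resolves this by replacing the full $g$-action with its projection onto $\widetilde{W}_{\Gamma(g)}$, where the action is purely hyperbolic and the choice of $x$ is unobstructed; the compatibility of the exceptional sets and foliations with this projection is exactly the commutation property recalled from \cite[Section~3.6]{PY89}.
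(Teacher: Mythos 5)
Your proposal takes a genuinely different route from the paper's, and that route has a gap that the paper's own argument was designed to avoid. The paper does \emph{not} prove Proposition~\ref{PROPtrivial} by replicating the ``nowhere density of $\{\Phi\,:\,g\in Z_1(p,\Phi)\}$ for each fixed $g$, then minimality over the closed-subgroup lattice'' mechanism from Subsubsection~\ref{SSnocompact}. Instead, it defines $\mathcal{V}_1$ directly as the set of $\Phi$ for which there is a single witness point $x$ (in $\tilde J(p,\Phi)\cap\widetilde W$ in Case~1, in $\widetilde W\cap\widetilde W'$ in Case~2, with the transversality condition in Case~3) such that \emph{every} projection $\tilde\pi^A_\Gamma(x)$, for $\Gamma$ proper nonempty, lands outside the exceptional set. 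Openness and density of $\mathcal V_1$ are then checked directly, and the decisive step is Lemma~\ref{LEMaccumulation} (together with Lemma~\ref{LEMsubpsaces} in Case~3): for any $g\in Z_0\smallsetminus Z_1$, a subsequence of $g^{n}x$ converges to $\tilde\pi^A_{\Gamma(g)}(x)$. Since the exceptional set is closed and $g$-invariant and contains $x$, this forces $\tilde\pi^A_{\Gamma(g)}(x)$ into the exceptional set, contradicting $\Phi\in\mathcal V_1$.

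There are two concrete problems with porting the compact-case argument as you propose. First, the minimality step in Subsubsection~\ref{SSnocompact} rests on the existence of a minimal element among $\{Z_1(p,\Phi')\,:\,\Phi'\in O\}$ and on the upper semicontinuity $Z_1(p,\Phi')\subset Z_1(p,\Phi)$ for $\Phi'$ near $\Phi$, both of which in \cite{PY89} are established for the \emph{compact} group $Z_1$; you assert that ``the same openness statement as before works'' and that a minimal $Z_0(p,\Phi_0)$ can be selected, but neither is justified for the noncompact $Z_0$ (whose closed-subgroup lattice contains continuum-many subgroups through the $\R^{r+s-1}$ factor), and the paper evidently regards this as a genuine obstruction since it switches strategy at exactly this point. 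Second, your description of the dynamics of a noncompact $g$ on the orbit space $S$ is off: you say ``the full $g$-orbit of a generic $x$ meets the fundamental domain densely,'' but the orbit $\{g^nx\}$ does not equidistribute --- it \emph{accumulates at the single point} $\tilde\pi^A_{\Gamma(g)}(x)\in\widetilde W_{\Gamma(g)}$. That accumulation, and nothing else, is what makes the definition of $\mathcal V_1$ via the projections $\tilde\pi^A_\Gamma$ effective, and it is precisely the content of Lemma~\ref{LEMaccumulation}, which your proposal names only implicitly (via ``hyperbolic displacement'') and never actually invokes. Without it, the perturbation in your step two kills one fixed $g$ at a time, and the missing mechanism to pass from ``each $g$ individually excluded on a dense set'' to ``all of $Z_0\smallsetminus\{1\}$ excluded on an open dense set'' is exactly where the proof must, and in the paper does, use the accumulation lemma.
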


\begin{proof}
As in the proof of Lemma \ref{LEMnowheredense} we divide this proof into the three different cases on page \pageref{itemcases}.  In each case we use an invariant closed exceptional set or exceptional properties.  Then we can choose a point $x\in S_A$ in the exceptional set and make a perturbation such that $\tilde{\pi}^A_\Gamma(x)$ is not in the exceptional set for any proper nonempty subset $\Gamma$ of\/ $\{1,.., r+s\}$. The next lemma then shows that if there is a nontrivial element in the centralizer there is some $\Gamma$ such that $\tilde{\pi}^A_\Gamma(x)$ is in the exceptional set, a contradiction.

\begin{lemma}[{\cite[Lemma 1]{PY89}}]\label{LEMaccumulation}
Let $A\in\mathcal{V}$ and $h\in Z_0\smallsetminus Z_1$.  Then
$$\lim_{n\to \infty} d(h^nx, h^n(\tilde{\pi}^A_{\Gamma(h)}(x)))=0$$ for any $x$ in $S_A\smallsetminus\widetilde W_{\Gamma(h)^c}$ and there is a subsequence with $\lim_{k\to \infty}h^{n_k}x=\tilde{\pi}^A_{\Gamma(h)}(x)$.
\end{lemma}

{\bfseries In Case 1}, say that $\Phi\in {\mathcal V}$ belongs to ${\mathcal V}_1$ if and only if there exists $x\in \tilde J(p,\Phi) \cap \widetilde W$ such that for any nontrivial proper subset $\Gamma$ of\/ $\{1,\dots,r+s\}$ the point $\tilde \pi_\Gamma^A \in \widetilde W_\Gamma$ does not belong to $\tilde J(p,\Phi)$.
The set ${\mathcal V}_1$ is open because $\tilde \pi_E^A$ and $\tilde J(p,\Phi)$ depend continuously on $\Phi \in {\mathcal V}$. For any $\Phi\in {\mathcal V}$, we choose a special small perturbation $\Phi^\prime$ such that for some $x\in \tilde J(p,\Phi')$ we have $\tilde \pi_\Gamma^A(x)\not\in \tilde J(p,\Phi')$ for any nonempty proper subset $\Gamma$ of\/ $\{1,\dots,r+s\}$.
To perturb the flow we can change the vector field we fix $\Gamma$ and a small  neighborhood of a point corresponding to $\tilde \pi _\Gamma^A(x)$ so that $x\in \tilde J(p, \Phi')$, but $\tilde \pi_\Gamma^A(x)\notin \tilde J (p, \Phi')$.  We do this for each $\Gamma$.
Then $\Phi^\prime$ belongs to ${\mathcal V}_1$, so ${\mathcal V}_1$ is dense.

For $\Phi\in {\mathcal V}_1$, $\bar g\in Z_0(p,\Phi) \smallsetminus Z_1(p,\Phi)$, and $x\in \tilde J(p,\Phi)\cap \widetilde W$ as in the definition of\/ ${\mathcal V}_1$, Lemma \ref{LEMaccumulation} implies $\tilde{\pi}^A_{\bar{g}}(x)\in \tilde{J}(p, \Phi)$, a contradiction. So $Z_0(p, \Phi)$ is trivial.

{\bfseries In Case 2}, we define $\widetilde W$ for $p$ as described above as an open and dense set of\/ $S_A$ and define $\widetilde W'$ similarly for the point $q$.  We say $\Phi\in \mathcal{V}$ belongs to $\mathcal{V}_1$ if there belongs some point $x\in \widetilde W\cap \widetilde W'$ such that for any proper nonempty set $\Gamma$ we have $\tilde{\pi}^A_\Gamma(x)\in \widetilde W'$.  Since the sets $\widetilde W$ and $\widetilde W'$ and the function $\tilde{\pi}^A$ depend continuously on $\Phi$ we see that $\mathcal{V}_1$ is open.  To see that it is dense again perturb the flow near a point in $\widetilde W\cap \widetilde W'$.
To do this let $x\in \widetilde W\cap \widetilde W'$
and perturb the flow to a flow $\Phi'$ such that the vector fields for the flows agree on a small neighborhood of the fundamental domain for $S_A$ and in a neighborhood of the orbit of $x$ and such that $\tilde{\pi}^A_\Gamma(x)\in \widetilde W'$ for any proper nonempty subset of\/ $\Gamma$.

For $\Phi\in \mathcal{V}_1$ and $(g_1,g_2)\in (Z_0\times Z_0')\smallsetminus (Z_1\times Z_1')$ such that $(g_1, g_2)$ have the same action on $S_A$ we have $g_1\in Z_0\smallsetminus Z_1$ and $g_2\in Z_0'\smallsetminus Z_1'$. By Lemma \ref{LEMaccumulation}, $\tilde{\pi}^A_{g_1}(x)$ is a limit point of\/ $\{g_1^n(x)\}_{n\geq 0}$ and so the limit set of\/ $\{g_2^n(x)\}_{n\geq 0}$ is contained in $S_A\smallsetminus \widetilde W'$.  This contradicts the choice of\/ $x$ for the flow.

{\bfseries In Case 3}, we let $\mathcal{V}_1$ be the set of\/ $\Phi\in \mathcal{V}$ such that for some point $x\in\widetilde W$ and any proper nonempty set $\Gamma$ we have $T_x \tilde{F}(p,\Phi)$ transverse to $W_{\Gamma^c}$ and $T_{\tilde{\pi}^A_\Gamma(x)}\tilde{F}(p, \Phi)$ is transverse to $W_\Gamma$.  The set $\mathcal{V}_1$ is open since the tangent spaces depend continuously and transversality is then an open condition.  To see that it is dense we perturb the flow to obtain the transversality conditions.

To see that  $\Phi\in \mathcal{V}_1$ and $g\in Z_0\smallsetminus Z_1$ does not leave $\tilde{F}(p, \Phi)$ invariant we use the next lemma to obtain the contradiction.

\begin{lemma}[{\cite[Lemma 2]{PY89}}]\label{LEMsubpsaces}
Let $\Phi\in \mathcal{V}$, $h\in Z_0\smallsetminus Z_1$, $x\in S_A\smallsetminus\widetilde W_{\Gamma(h)^c}$, and $\{n_k\}$ a sequence of integers satisfying the conclusion of Lemma \ref{LEMaccumulation}.  If\/ $V$ is a subspace of\/ $T_xS$ transverse to $\widetilde W_{\Gamma(h)^c}$ such that
$$
\lim_{k\to\infty}T_x h^{n_k}(V)=V_0\subset T_{\tilde{\pi}^A_h(x)}S_A,$$
then either $V_0\subset W_\Gamma(h)$ or $W_\Gamma(h)\subset V_0$.
\end{lemma}

This proves Proposition \ref{PROPtrivial} and hence Theorem \ref{t.periodictrivial}.
\end{proof}
\subsubsection{Proof of Theorem \ref{t.residualtrivial}}
Since the conclusion of Theorem \ref{t.periodictrivial} is stronger than the conclusion of \ref{t.residualtrivial} the theorem holds if there is an attractor or repeller that is a fixed point or single periodic orbit. We may then assume that all attractors and repellers for our flow are neither a single periodic orbit nor a fixed point.


Let $\mathcal{V}$ be an open dense set of flows from Theorem \ref{THMopensetscentralizers} 
and such that all attractors or repellers contain a periodic orbit that is nonresonant and such that the orbit is fixed by each element of the centralizer as in Lemma \ref{LEMPerturbToFixOrbits}.
Let $\Phi\in \mathcal{V}$ and $\Lambda$ be an attractor and $p\in \Lambda$ be a periodic point of\/ $\Lambda$ such that $g(p)\in \mathcal{O}(p)$ for each $g\in \mathcal{C}^\infty(\Phi)$.  The set of homoclinic points related to $p$ is $J(\Phi)=W^s(p)\cap W^{cu}(p)\smallsetminus\{p\}$.  For each $q\in J(\Phi)$ there is a unique point $q'\in W^u(p)$ such that $q'=\varphi^s(q)$ where $0\leq s < \pi(p)$ and $\pi(p)$ is the period of\/ $p$.  There exist linearizations of\/ $W^s(p)$ and $W^u(p)$ as described in Remark \ref{REMAvsPsi}.  If\/ $\dim(E^s)=n_1$ and $\dim(E^u)=n_2$, then there is a map $h$ from $J(\Phi)$ into $\mathbb{R}^{n_1}\times \mathbb{R}^{n_2}$ given by
\[
h(q)=(\mathcal{E}^s(p, \Phi)^{-1}(q), \mathcal{E}^u(p, \Phi)^{-1}(q'))
\]
where $\mathcal{E}^s(p, \Phi)$ is the linearization of\/ $W^s(p)$ and $\mathcal{E}^u(p, \Phi)$ is the linearization of\/ $W^u(p)$.  The map $h$ is injective and we let $\tilde{J}(p,\Phi)=h(J(p, \Phi))$.  This is a discrete closed set in $\mathbb{R}^{n_1}\times \mathbb{R}^{n_2}$.  Furthermore, the set $\tilde{J}(p, \Phi)$ is invariant under the transformation $A(\Phi)=(A_s, A_u^{-1})$ where $A_s$ is the linearization of the flow on $W^s(p)$, and $A_u$ is the linearization of the flow on $W^u(p)$.  Furthermore, for any element $g\in C^\infty(\Phi)$ and $\bar g_s$ the linearization for $g$ of $W^s(p)$ and $\bar g_u$ the linearization for $g$ of $W^u(p)$ the set $\tilde{J}(p, \Phi)$ is invariant for $(\bar g_s, \bar g_u^{-1})$.  The proof of Theorem \ref{t.residualtrivial} follows from this next proposition.  This proposition and its proof are almost identical and proof of Proposition 1 of \cite[p. 92, p. 95]{PY89}.

%
%
%

\begin{proposition}
If\/ $\dim(M)=3$ there is an open and dense set $\mathcal{U}$ of flows such that if $\Phi\in \mathcal{U}$ then no $g\in Z_0\smallsetminus \{1_{Z_0}\}$ leaves $\tilde{J}(p,\Phi)$ invariant.  If\/ $\dim(M)\geq 4$, then there is residual set of flows $\mathcal{R}\subset \mathcal{V}$ such that no $g\in Z_0\smallsetminus\{1_{Z_0}\}$ leaves $\tilde{J}(p, \Phi)$ invariant for $\Phi\in \mathcal{R}$.
\end{proposition}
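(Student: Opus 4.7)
The plan is to adapt the Palis–Yoccoz arguments \cite[Proposition 1, pp.~92, 95]{PY89} from the discrete-time setting to flows, using the homoclinic set $\tilde J(p,\Phi)$ as the ``exceptional set'' that detects nontrivial centralizer elements. The overall strategy has four ingredients: (i) openness of the defining condition, (ii) an enumeration of ``bad'' conditions indexed by candidate symmetries, (iii) a perturbation lemma that breaks any one such symmetry while keeping the hyperbolic skeleton intact, and (iv) a dimension-dependent bookkeeping that upgrades the Baire argument to open-and-dense when $\dim M=3$.

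For openness, I would fix a compact piece $F\subset\tilde J(p,\Phi)$ consisting of finitely many homoclinic points together with finitely many $A(\Phi)$-translates. By structural stability (Theorem \ref{thmstrucstabilityhypsets}), transversality of homoclinic intersections, and the continuity of the linearizations $\mathcal{E}^s(p,\Phi),\mathcal{E}^u(p,\Phi)$ (Remark \ref{REMAvsPsi}), the map $\Phi\mapsto F\subset\tilde J(p,\Phi)$ is continuous, as is $\Phi\mapsto A(\Phi)$. Thus any ``general position'' condition imposed by finite inequalities on the coordinates of $F$ relative to the $A(\Phi)$-action is $C^1$-open. This is what promotes the $G_\delta$-residual set to an open dense set when only finitely many inequalities are needed, i.e.\ in low dimension.

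For density and residuality, fix a candidate nontrivial $\bar g\in Z_0$; since $Z_0$ is a finite-dimensional abelian Lie group (Remark \ref{REMlinearcentralizergroup}), the set of candidates is parameterized by a second-countable space, and it suffices to produce, for each candidate in a suitable countable dense subset, an open dense set $\mathcal{V}_{\bar g}\subset\mathcal{V}$ on which $\bar g\not\in Z_0(p,\Phi)$. Given $\Phi$ and $\bar g\ne 1_{Z_0}$, choose a homoclinic point $q$ whose image $\tilde q=h(q)\in\tilde J(p,\Phi)\cap\widetilde W$ sits off all coordinate axes, and then perturb the vector field in a small flow-box along the orbit segment between $q'$ and $q$ (staying away from neighborhoods where the linearizations of $W^s(p)$ and $W^u(p)$ are fixed, cf.\ the construction used in Lemma \ref{LEMnowheredense}). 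Such a perturbation moves $\tilde q$ in $\mathbb{R}^{n_1}\times\mathbb{R}^{n_2}$ while leaving $A(\Phi)$ and the rest of $\tilde J$ essentially unchanged; since $\bar g$ acts as a fixed diagonal linear map commuting with $A(\Phi)$, we can displace $\tilde q$ off the set $\bar g(\tilde J(p,\Phi))$. Taking a countable intersection over $\bar g$ running through a dense sequence in $Z_0$, plus continuity in $\bar g$ of the action, yields the residual set $\mathcal R$ when $\dim M\geq 4$.

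The main obstacle, and also the source of the dimension split, is upgrading this Baire argument to open-and-dense in the $\dim M=3$ case. When $\dim M=3$ the periodic saddle $p$ has $n_1=n_2=1$ (up to the center direction), so $Z_0$ has dimension $1+1=2$, and requiring $\bar g(\tilde q)\notin \tilde J(p,\Phi)$ for a \emph{single} well-chosen $\tilde q$ and its $A$-orbit rules out all nontrivial $\bar g$ simultaneously---because two generic points of $\tilde J$ in $\widetilde W$ determine a diagonal element of $Z_0$ uniquely. Hence a finite-codimension open condition suffices, and I would verify explicitly in this low-dimensional case that perturbing $\tilde q$ transversely to the one-parameter family $\bar g\mapsto \bar g(\tilde q)$ yields a single $C^1$-open and $C^\infty$-dense condition. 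In higher dimensions the corresponding ``rigidity of one orbit'' fails—there are nontrivial $\bar g$ fixing any finite set of points—and one is forced into the countable-intersection residual argument, which together with Lemma \ref{LEMpowerofflow} and Theorem \ref{THMopensetscentralizers} completes the proof of Theorem \ref{t.residualtrivial}.
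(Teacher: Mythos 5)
Your high-level plan (break symmetries by flow-box perturbations that move one homoclinic point in the linearizing coordinates while keeping the skeleton fixed, then Baire) matches the paper's, but there are two concrete problems in the way you carry it out.

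\textbf{The wrong countable parameter for the residual set.} You index the Baire intersection by a countable dense set of candidates $\bar g\in Z_0$. This does not suffice: for a fixed $\Phi$ the ``bad'' set $Z_0(p,\Phi)=\{\bar g\in Z_0: \bar g(\tilde J(p,\Phi))=\tilde J(p,\Phi)\}$ is a \emph{closed} subgroup of $Z_0$ (non-preservation is an open condition on $\bar g$ since $\tilde J$ is closed away from $0$), so a closed subgroup can perfectly well avoid every nontrivial element of a countable dense subset. The paper instead uses the countability of the set of homoclinic orbits: since $\tilde J$ is discrete and $\bar g$ acts by a fixed diagonal matrix, a nontrivial $\bar g$ preserving $\tilde J$ is pinned down by a pair $(x,y)$ of homoclinic points with $h(y)=\bar g(h(x))$ and $y\notin\mathcal O(x)$, and the perturbation near $\varphi^{\pi(p)}(x)$ (chosen to miss $p$, $\mathcal O(y)$, and the backward orbit of $x$) moves $h(x)$ without touching $h(y)$ or the linearization. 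The ``bad'' flows are thus a countable union over such pairs of nowhere dense sets. That is the step you are missing, and it is the actual source of residuality.

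\textbf{The $\dim M=3$ case is not a codimension argument.} You attempt to get openness by arguing that in dimension $3$ two generic points of $\tilde J$ determine $\bar g$ and a single transversality condition suffices; but the orbit $Z_0\cdot\tilde q$ already has dimension comparable to the ambient $\mathbb R^{n_1}\times\mathbb R^{n_2}=\mathbb R^2$, so ``perturbing $\tilde q$ transversely to the family $\bar g\mapsto\bar g(\tilde q)$'' is not clearly available, and even if it were you would still need to rule out all pairs, again a countable intersection. The paper's route is different in kind: when $n_1=n_2=1$ it excludes \emph{all} $h\in Z_0\smallsetminus Z_1$ without any perturbation, because if $h$ is a diagonal $(\mu_1,\mu_2)$ with $\log|\mu_1|/\log|\lambda_1|\neq\log|\mu_2|/\log|\lambda_2|$, then one can choose $k,l\in\mathbb Z$ so that $A(\Phi)^kh^l$ is a contraction not lying on the $A$-orbit, and a second independent contraction preserving the discrete set $\tilde J$ is impossible. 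The openness of $\mathcal U$ then comes entirely from controlling the compact part $Z_1$, done by appealing to Case~1 of Lemma~\ref{LEMnowheredense} and the lower-semicontinuity argument following it, exactly as in the single-attractor case. Your openness paragraph (structural stability plus continuity of linearizations) is a reasonable observation, but it supports the easier half; the real work in dimension $3$ is the contraction argument you did not find.
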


\begin{proof}
Let $x, y\in J(p, \Phi)$ such that $y$ is not in the orbit of\/ $x$.  Let $h(x)=(x_1, x_2)$ and $h(y)=(y_1, y_2)$. Fix $x_1'$ sufficiently close to $x_1$ and select a small neighborhood $V$ of $\varphi^{\pi(p)}(x)$ so that it does not intersect the closed set consisting of $p$ the orbit of\/ $y$ and the backward orbit of $x$.  We now let $\Phi'$ be a perturbation of the flow so the the flows agree outside of\/ $V$ and the stable manifold of\/ $p$ in $V$ is the same for the two flows.  This can be done in such a way that $h(x)=(x_1', x_2)$ for the perturbed map.

Since the flows $\Phi'$ and $\Phi$ agree in a neighborhood of\/ $p$ the linearizations are the same.  Furthermore, it is not possible for some $g\in Z_0$ to satisfy $g(x)=y$ for both $\Phi$ and $\Phi'$.  Since the set $\tilde{J}(p, \Phi)$ is discrete we know that if\/ $\bar g\in Z_0\smallsetminus\{1_{Z_0}\}$ that the set of\/ $\Phi$ such that $\tilde{J}(p, \Phi)$ is $g$-invariant is nowhere dense.

Since the homoclinic points $J(p, \Phi)$ are countable there is a residual set $\mathcal{R}$ of \(\Phi\in \mathcal{V}\) such that if $g\in Z_0$ and $g(\tilde{J}(p, \Phi)=\tilde{J}(p, \Phi)$, then $g=1_{Z_0}$.

We now assume assume that $\dim(M)=3$.  
Modifying the above argument, together with the proof of Case 1 in Lemma \ref{LEMnowheredense}, and the argument just after the proof of Lemma \ref{LEMnowheredense} we see that there is an open and dense set $\mathcal{U}$ of flows such that if $g\in Z_1\setminus 1_{Z_1}$ and $\Phi\in\mathcal{U}$ that $g$ does not leave $\tilde{J}(p, \Phi)$ invariant.
In this case the stable and unstable manifolds are one dimensional and the transformation $A(\Phi)=(A_s, A_u^{-1})=(\lambda_1, \lambda_2)$.  Let $D$ be a diagonal matrix with diagonal entries $(\mu_1, \mu_2)$.  For the associated action of\/ $D$, denoted by $h$, we see that $h\in Z_0-Z_1$ if and only if\/ $$
\frac{\log|\mu_1|}{\log|\lambda_1|}\neq \frac{\log|\mu_2|}{\log|\lambda_2|}.$$
Then there exists some $k,l\in\mathbb{Z}$ such that $A(\Phi)^k h^l$ is a contraction---which contradicts $\tilde{J}(p, \Phi)$ being discrete and invariant. 
So each flow in in the open and dense set $\mathcal{U}$ has trivial\/ $\mathbb{R}$-centralizer.
\end{proof}

\ifdefined\href\renewcommand{\MR}[1]{\relax\ifhmode\unskip\space\fi MR\href{http://www.ams.org/mathscinet-getitem?mr=#1}{#1}}\fi
\bibliographystyle{amsplain}
\bibliography{ourbib-BFH2018}{}

\providecommand{\bysame}{\leavevmode\hbox to3em{\hrulefill}\thinspace}
\providecommand{\MR}{\relax\ifhmode\unskip\space\fi MR }
\providecommand{\MRhref}[2]{%
  \href{http://www.ams.org/mathscinet-getitem?mr=#1}{#2}
}
\providecommand{\href}[2]{#2}
\begin{thebibliography}{10}

\bibitem{Aczel}
J\'anos~D. Acz\'{e}l, \emph{Lectures on functional equations and their
  applications}, Mathematics in Science and Engineering, Vol. 19, Academic
  Press, New York-London, 1966, Translated by Scripta Technica, Inc.
  Supplemented by the author. Edited by Hansjorg Oser. \MR{0208210}

\bibitem{Anderson}
Boyd Anderson, \emph{Diffeomorphisms with discrete centralizer}, Topology
  \textbf{15} (1976), no.~2, 143--147. \MR{0402821}

\bibitem{Artigue}
Alfonso Artigue, \emph{Kinematic expansive flows}, Ergodic Theory Dynam.
  Systems \textbf{36} (2016), no.~2, 390--421. \MR{3503030}

\bibitem{BF}
Lennard Bakker and Todd Fisher, \emph{Open sets of diffeomorphisms with trivial
  centralizer in the {$C^1$} topology}, Nonlinearity \textbf{27} (2014),
  no.~12, 2869--2885. \MR{3291134}

\bibitem{BCVW}
Christian Bonatti, Sylvain Crovisier, Gioia~M. Vago, and Amie Wilkinson,
  \emph{Local density of diffeomorphisms with large centralizers}, Ann. Sci.
  \'{E}c. Norm. Sup\'{e}r. (4) \textbf{41} (2008), no.~6, 925--954.
  \MR{2504109}

\bibitem{BCWGenTrivCons}
Christian Bonatti, Sylvain Crovisier, and Amie Wilkinson, \emph{{$C^1$}-generic
  conservative diffeomorphisms have trivial centralizer}, J. Mod. Dyn.
  \textbf{2} (2008), no.~2, 359--373. \MR{2383272}

\bibitem{BCWGenTrivAnn}
\bysame, \emph{The centralizer of a {$C^1$}-generic diffeomorphism is trivial},
  Electron. Res. Announc. Math. Sci. \textbf{15} (2008), 33--43. \MR{2407535}

\bibitem{BCWGenTriv}
\bysame, \emph{The {$C^1$} generic diffeomorphism has trivial centralizer},
  Publ. Math. Inst. Hautes \'{E}tudes Sci. (2009), no.~109, 185--244.
  \MR{2511588}

\bibitem{BRV}
Wescley Bonomo, Jorge Rocha, and Paulo Varandas, \emph{The centralizer of
  {K}omuro-expansive flows and expansive {$\Bbb R^d$} actions}, Math. Z.
  \textbf{289} (2018), no.~3-4, 1059--1088. \MR{3830239}

\bibitem{BonomoVarandas}
Wescley Bonomo and Paulo Varandas, \emph{A criterion for the triviality of the
  centralizer for vector fields and applications}, unpublished,
  arXiv:1903.07065.

\bibitem{BowenWalters}
Rufus Bowen and Peter Walters, \emph{{Expansive one-parameter flows}}, Journal
  of Differential Equations \textbf{12} (1972), 180--193.

\bibitem{Fis1}
Todd Fisher, \emph{Trivial centralizers for {A}xiom {A} diffeomorphisms},
  Nonlinearity \textbf{21} (2008), no.~11, 2505--2517. \MR{2448228}

\bibitem{FH}
Todd Fisher and Boris Hasselblatt, \emph{Hyperbolic flows}, Z\"urich Lectures
  in Advanced Mathematics, European Mathematical Society (EMS), Z\"{u}rich,
  2019, Specific references are to the preliminary version
  at\newline\url{http://www.ms.u-tokyo.ac.jp/lecturenotes16-hasselblatt.pdf}.

\bibitem{GhysFoliations}
Etienne Ghys, \emph{{Flots d'Anosov dont les feuilletages stables sont
  diff{\'e}rentiables}}, Annales Scientifiques de l'Ecole Normale
  Sup{\'e}rieure. Quatri{\`e}me S{\'e}rie \textbf{20} (1987), no.~2, 251--270.

\bibitem{Gura1}
A.~A. Gura, \emph{Separating diffeomorphisms of a torus}, Mat. Zametki
  \textbf{18} (1975), no.~1, 41--49. \MR{0402822}

\bibitem{Gura2}
\bysame, \emph{The horocycle flow on a surface of negative curvature is
  separating}, Mat. Zametki \textbf{36} (1984), no.~2, 279--284. \MR{759440}

\bibitem{KN2011}
Anatole Katok and Viorel Ni\c{t}ic\u{a}, \emph{Rigidity in higher rank abelian
  group actions. {V}olume {I}}, Cambridge Tracts in Mathematics, vol. 185,
  Cambridge University Press, Cambridge, 2011, Introduction and cocycle
  problem. \MR{2798364}

\bibitem{KS94}
Anatole Katok and Ralf~J. Spatzier, \emph{First cohomology of {A}nosov actions
  of higher rank abelian groups and applications to rigidity}, Inst. Hautes
  \'{E}tudes Sci. Publ. Math. (1994), no.~79, 131--156. \MR{1307298}

\bibitem{KatokSpatzier}
\bysame, \emph{{Differential rigidity of Anosov actions of higher rank abelian
  groups and algebraic lattice actions}}, Trudy Matematicheskogo Instituta
  Imeni V. A. Steklova. Rossii skaya Akademiya Nauk \textbf{216} (1997),
  no.~Din. Sist. i Smezhnye Vopr., 292--319.

\bibitem{Kopell}
Nancy Kopell, \emph{Commuting diffeomorphisms}, Global {A}nalysis ({P}roc.
  {S}ympos. {P}ure {M}ath., {V}ol. {XIV}, {B}erkeley, {C}alif., 1968), Amer.
  Math. Soc., Providence, R.I., 1970, pp.~165--184. \MR{0270396}

\bibitem{LOS}
Martin Leguil, Davi Obata, and Bruno Santiago, \emph{On the centralizer of
  vector fields: criteria of triviality and genericity results}, unpublished,
  arXiv:1810.05085.

\bibitem{Obata}
Davi Obata, \emph{Symmetries of vector fields: the diffeomorphism centralizer},
  unpublished, arXiv:1903.05883.

\bibitem{Oka}
Masatoshi Oka, \emph{Expansive flows and their centralizers}, Nagoya Math. J.
  \textbf{64} (1976), 1--15. \MR{0425932}

\bibitem{Palis78}
J.~Palis, \emph{Rigidity of the centralizers of diffeomorphisms and structural
  stability of suspended foliations}, Differential topology, foliations and
  {G}elfand-{F}uks cohomology ({P}roc. {S}ympos., {P}ontif\'{i}cia {U}niv.
  {C}at\'{o}lica, {R}io de {J}aneiro, 1976), Lecture Notes in Math., vol. 652,
  Springer, Berlin, 1978, pp.~114--121. \MR{505654}

\bibitem{PY89}
Jacob Palis and Jean-Christophe Yoccoz, \emph{Rigidity of centralizers of
  diffeomorphisms}, Ann. Sci. \'Ecole Norm. Sup. (4) \textbf{22} (1989), no.~1,
  81--98. \MR{985855}

\bibitem{RochaC0}
Jorge Rocha, \emph{A note on the {$C^0$}-centralizer of an open class of
  bidimensional {A}nosov diffeomorphisms}, Aequationes Math. \textbf{76}
  (2008), no.~1-2, 105--111. \MR{2443464}

\bibitem{RochaVarandas}
Jorge Rocha and Paulo Varandas, \emph{The centralizer of {$C^r$}-generic
  diffeomorphisms at hyperbolic basic sets is trivial}, Proc. Amer. Math. Soc.
  \textbf{146} (2018), no.~1, 247--260. \MR{3723137}

\bibitem{RHZ14}
Federico Rodriguez~Hertz and Zhiren Wang, \emph{Global rigidity of higher rank
  abelian {A}nosov algebraic actions}, Invent. Math. \textbf{198} (2014),
  no.~1, 165--209. \MR{3260859}

\bibitem{Sad}
Paulo~Roberto Sad, \emph{Centralizers of vector fields}, Topology \textbf{18}
  (1979), no.~2, 97--104. \MR{544150}

\bibitem{Sell85}
George~R. Sell, \emph{Smooth linearization near a fixed point}, Amer. J. Math.
  \textbf{107} (1985), no.~5, 1035--1091. \MR{805804}

\bibitem{Smale98}
Steve Smale, \emph{Mathematical problems for the next century}, Math.
  Intelligencer \textbf{20} (1998), no.~2, 7--15. \MR{1631413}

\bibitem{Walters}
Peter Walters, \emph{Homeomorphisms with discrete centralizers and ergodic
  properties}, Math. Systems Theory \textbf{4} (1970), 322--326. \MR{0414831}

\end{thebibliography}
\end{document}